\newtheorem{Ass}{Assumption}
\newtheorem{prop}{Proposition}
\DeclareMathOperator*{\argmin}{arg\;min}
\newcommand{\Tsup}{^{\text{\scriptsize T}}}
\newcommand{\msup}{^{\text{\scriptsize m}}}
\newcommand{\tsup}{^{\text{t}}}
\newcommand{\sym}{_{\text{sym}}}
\newcommand{\msub}{_{\text{\scriptsize m}}}
\newcommand{\oo}{\omega}
\newcommand{\sip} {\! \cdot\!}
\newcommand{\pdemi} {\mbox{$\frac{1}{2}$}}
\newcommand{\dip} {\! :\!}
\newcommand{\demi} {\inv{2}}
\newcommand{\CSs} {\CS^{\star}}
\newenvironment{cmatrix}{\left\{\begin{matrix}}{\end{matrix}\right\}}
\newcommand{\OO}{\Omega}
\newcommand{\G}{\Gamma}
\newcommand{\GN}{\Gamma_{\text{N}}}
\newcommand{\GD}{\Gamma_{\text{D}}}
\newcommand{\Gc}{\G_{c}}
\newcommand{\iO}{\int_{\OO}}
\newcommand{\dV}{\;\text{d}V}
\newcommand{\dO}{\partial\OO}
\newcommand{\Gu}{\G_{u}}
\newcommand{\Ecal}{\mathcal{E}}
\newcommand{\Dcal}{\mathcal{D}}
\newcommand{\CS}{\text{\boldmath $\mathcal{C}$}}
\newcommand{\Vcal}{\mathcal{V}}
\newcommand{\Ucal}{\mathcal{U}}
\newcommand{\Wcal}{\mathcal{W}}
\newcommand{\Scal}{\mathcal{S}}
\newcommand{\Qcal}{\mathcal{Q}}
\newcommand{\AS}{\text{\boldmath $\mathcal{A}$}}
\newcommand{\Lcal}{\mathcal{L}}
\newcommand{\Ccal}{\mathcal{C}}
\newcommand{\Fcal}{\mathcal{F}}
\newcommand{\Acal}{\mathcal{A}}
\newcommand{\Bcal}{\mathcal{B}}
\newcommand{\Hcal}{\mathcal{H}}
\newcommand{\Kcal}{\mathcal{K}}
\newcommand{\Zcal}{\mathcal{Z}}
\newcommand{\Ncal}{\mathcal{N}}
\newcommand{\Rbb} {\mathbb{R}}
\newcommand{\bfb} {\boldsymbol{b}}
\newcommand{\bfu} {\boldsymbol{u}}
\newcommand{\bfsig}{\boldsymbol{\sigma}}
\newcommand{\bfg} {\boldsymbol{g}}
\newcommand{\bfn} {\boldsymbol{n}}
\newcommand{\bfze}{\mathbf{0}}
\newcommand{\bfeps}{\boldsymbol{\varepsilon}}
\newcommand{\bfa} {\boldsymbol{a}}
\newcommand{\bfwT} {\widetilde{\boldsymbol{w}}{}}
\newcommand{\bff} {\boldsymbol{f}}
\newcommand{\bfw} {\boldsymbol{w}}
\newcommand{\bfv} {\boldsymbol{v}}
\newcommand{\bfx} {\boldsymbol{x}}
\newcommand{\bftau}{\boldsymbol{\tau}}
\newcommand{\bfsigT}{\tilde{\bfsig}}
\newcommand{\bfuT} {\widetilde{\boldsymbol{u}}{}}
\newcommand{\CSh} {\hat{\CS}}
\newcommand{\bfus} {\boldsymbol{u}^{\star}}
\newcommand{\bfws} {\boldsymbol{w}^{\star}}
\newcommand{\bfd} {\boldsymbol{d}}
\newcommand{\bfvT} {\widetilde{\boldsymbol{v}}{}}
\newcommand{\sfU} {\mathsf{U}}
\newcommand{\sfF} {\mathsf{F}}
\newcommand{\sfUT} {\widetilde{\sfU}}
\newcommand{\Ubb} {\mathbb{U}}
\newcommand{\sfG} {\mathsf{G}}
\newcommand{\bft} {\boldsymbol{t}}
\newcommand{\bfz} {\boldsymbol{z}}
\newcommand{\sfY} {\mathsf{Y}}
\newcommand{\bfps}{\boldsymbol{\psi}}
\newcommand{\Nbb} {\mathbb{N}}
\newcommand{\uhat}{\hat{u}}
\newcommand{\bfL} {\boldsymbol{L}}
\newcommand{\bfy} {\boldsymbol{y}}
\newcommand{\sfW} {\mathsf{W}}
\newcommand{\bfh} {\boldsymbol{h}}
\newcommand{\bfF} {\boldsymbol{F}}
\newcommand{\bfG} {\boldsymbol{G}}
\newcommand{\bfH} {\boldsymbol{H}}
\newcommand{\bfE} {\boldsymbol{E}}
\newcommand{\shsubs}{\hspace*{-0.1em}\subset\hspace*{-0.1em}}
\newcommand{\shleq}{\hspace*{-0.1em}\leq\hspace*{-0.1em}}
\newcommand{\Div}{\mbox{div}\,} 
\newcommand{\sheq}{\hspace*{-0.1em}=\hspace*{-0.1em}}
\newcommand{\shcap}{\hspace*{-0.1em}\cap\hspace*{-0.1em}}
\newcommand{\bfna}{\boldsymbol{\nabla}}
\newcommand{\shneq}{\hspace*{-0.1em}\not=\hspace*{-0.1em}}
\newcommand{\shdeq}{\hspace*{-0.1em}:=\hspace*{-0.1em}}
\newcommand{\shcup}{\hspace*{-0.1em}\cup\hspace*{-0.1em}}
\newcommand{\shsetm}{\hspace*{-0.1em}\setminus\hspace*{-0.1em}}
\newcommand{\lpar}{\big(\hspace*{0.1em}}
\newcommand{\rpar}{\hspace*{0.1em}\big)}
\newcommand{\lbra}{\big\langle\hspace*{0.1em}}
\newcommand{\rbra}{\hspace*{0.1em}\big\rangle}
\newcommand{\shin}{\hspace*{-0.1em}\in\hspace*{-0.1em}}
\newcommand{\lcb}{\big\{\hspace*{0.1em}}
\newcommand{\Lcb}{\Big\{\hspace*{0.1em}}
\newcommand{\Rcb}{\hspace*{0.1em}\Big\}}
\newcommand{\rcb}{\hspace*{0.1em}\big\}}
\newcommand{\shgeq}{\hspace*{-0.1em}\geq\hspace*{-0.1em}}
\newcommand{\inv}[1]{\dfrac{1}{#1}}
\newcommand{\shg}{\hspace*{-0.1em}>\hspace*{-0.1em}}
\newcommand{\shm}{\hspace*{-0.1em}-\hspace*{-0.1em}}
\newcommand{\shtimes}{\hspace*{-0.1em}\times\hspace*{-0.1em}}
\newcommand{\shp}{\hspace*{-0.1em}+\hspace*{-0.1em}}
\newcommand{\tens}{\hspace*{-1pt}\otimes\hspace*{-1pt}}
\newcommand{\shl}{\hspace*{-0.1em}<\hspace*{-0.1em}}
\newcommand{\nnum}{\notag \\}
\newcommand{\lsqb}{\big[\hspace*{0.1em}}
\newcommand{\rsqb}{\hspace*{0.1em}\big]}
\newcommand{\Lpar}{\Big(\,}
\newcommand{\Rpar}{\,\Big)}
\newcommand{\shoplus}{\hspace*{-0.1em}\oplus\hspace*{-0.1em}}
\newcommand{\suite}[1][0ex]{\notag \\[#1] & \mbox{}\hspace{15pt}}
\newcommand{\bfuH}{\widehat{\bfu}}
\newcommand{\Lambdak}{\Lambda_{\kappa}}
\newcommand{\LambdaRk}{\tilde{\Lambda}{}_{\kappa}}
\newcommand{\EcalR}{\tilde{\Ecal}}
\newcommand{\DcalR}{\tilde{\Dcal}}
\title[Analysis of ECE approach for elasticity imaging]{\Large Analysis of the error in constitutive equation approach for time-harmonic elasticity imaging}
\author{Wilkins Aquino}
\address{Dept. of Mech. Eng. and Mater. Sci., Duke University, Durham, USA}
\email{wilkins.aquino@duke.edu}
\author{Marc Bonnet}
\address{POEMS (CNRS, INRIA, ENSTA), ENSTA, 91120 Palaiseau, France. mbonnet@ensta.fr}
\email{mbonnet@ensta.fr}
\date{\today}
\begin{document}
\begin{abstract}
We consider the identification of heterogeneous linear elastic moduli in the context of time-harmonic elastodynamics. This inverse problem is formulated as the minimization of the modified error in constitutive equation (MECE), an energy-based cost functional defined as an weighted additive combination $\Ecal+\kappa\Dcal$ of the error in constitutive equation (ECE) $\Ecal$, expressed using an energy seminorm, and a quadratic error term $\Dcal$ incorporating the kinematical measurements. MECE-based identification are known from existing computational evidence to enjoy attractive properties such as improved convexity, robustness to resonant frequencies, and tolerance to incompletely specified boundary conditions (BCs). The main goal of this work is to develop theoretical foundations, in a continuous setting, allowing to explain and justify some of the aforementioned beneficial properties, in particular addressing the general case where BCs may be underspecified. A specific feature of MECE formulations is that forward and adjoint solutions are governed by a fully coupled system, whose mathematical properties play a fundamental role in the qualitative and computational aspects of MECE minimization. We prove that this system has a unique and stable solution at any frequency, provided data is abundant enough (in a sense made precise therein) to at least compensate for any missing information on BCs. As a result, our formulation leads in such situations to a well-defined solution even though the relevant forward problem is not \emph{a priori} clearly defined. This result has practical implications such as applicability of MECE to partial interior data (with important practical applications including ultrasound elastography), convergence of finite element discretizations and differentiability of the reduced MECE functional. In addition, we establish that usual least squares and pure ECE formulations are limiting cases of MECE formulations for small and large values of $\kappa$, respectively. For the latter case, which corresponds to exact enforcement of kinematic data, we furthermore show that the reduced MECE Hessian is asymptotically positive for any parameter perturbation supported on the measurement region, thereby corroborating existing computational evidence on convexity improvement brought by MECE functionals. Finally, numerical studies that support and illustrate our theoretical findings, including a parameter reconstruction example using interior data, are presented.
\end{abstract}
\maketitle

\section{Introduction}

Energy-based objective functionals are strong alternatives to conventional least square methods for various parameter identification problems. Such functionals, often called error in constitutive equation (ECE) functionals in the area of solid mechanics, were initially introduced in~\cite{ladeveze:83} for error estimation in the linear elastic FEM and in e.g.~\cite{kohn:lowe,knowles:01} for electrical impedance tomography. ECE functionals have since been successfully used in various mechanical parameter identification problems under linear static~\cite{gey:pag}, nonlinear quasistatic~\cite{marchand:18}, time-harmonic~\cite{lad:ned:rey:94,lad:chou:99,barthe:04,B-2012-2} and, more recently, transient conditions ~\cite{allix:05,feissel:allix:06,nguyen:08,B-2014-3}. Mathematical and numerical issues are also discussed in e.g.~\cite{chavent:96,gock:khan:09}.\enlargethispage*{3ex}

The main idea behind ECE approaches is to relax the constitutive equations connecting fluxes (e.g. stresses) and gradients of state variables (e.g strains). To this end, stresses (assumed to satisfy dynamic equilibrium) and strains (assumed to satisfy kinematical constraints) are treated as independent quantities. ECE functionals then measure residuals in constitutive equations evaluated on given stresses and strains, thereby assuming a physical meaning directly connected to constitutive parameter identification. In their original form, ECE functionals assume the measured displacements or strains to be strictly enforced as part of the kinematical admissibility constraints, but this is often undesirable as real data is usually polluted with noise. For this reason, ECE-based identification is nowadays rather formulated by means of so-called modified error in constitutive equation (MECE) functionals~\cite{lad:ned:rey:94}, where reliable and unreliable informations are treated differently. Equilibrium equations, initial conditions and known boundary conditions (BCs) are deemed reliable and enforced strictly (e.g. using Lagrange multipliers). By contrast, measured data, constitutive properties and (when applicable) imperfectly known BCs are deemed unreliable and incorporated as constitutive or observation residuals using an additive combination $\Ecal+\kappa\Dcal$ of ECE and least-squares components $\Ecal$ and $\Dcal$, with $\kappa$ a positive weight parameter.

Optimization problems that arise from MECE-based identification have been observed to display very attractive properties such as improved convexity, robustness to resonant frequencies, and tolerance to partially or completely unknown BCs. For instance, the MECE objective for transient elastodynamics was observed in~\cite{feissel:allix:06} to be convex over a wide region of the parameter space. Convexity improvement relative to least-squares functionals was also reported in \cite{B-2003-8} in the context of linear elastostatics. Moreover, for time-harmonic conditions, the FEM-discretized coupled stationarity problem was shown in \cite{B-2012-2} to remain uniquely solvable at resonant prescribed frequencies. In addition, as shown in~\cite{B-2015-1,B-2014-3}, ECE approaches can naturally accommodate configurations with partially or completely unknown BCs, a feature also used in~\cite{feissel:15} for parameter identification using elastostatic interior data (i.e. unknown BCs). This makes such formulations perfectly suited to situations where interior data is abundant over a subset of the medium being probed. Important practical applications include elasticity imaging~\cite{ghosh:17} where interior displacements are tracked inside soft tissue using ultrasound, but information about region boundaries is difficult to ascertain.

These advantages of MECE formulations over conventional least squares counterparts are backed by abundant numerical and experimental evidence, but to our best knowledge lack theoretical support in infinite-dimensional settings . Some of the existing analyses address discretized MECE formulations~\cite{B-2012-2,B-2014-3}, while \cite{gock:khan:09} studied existence of solutions and convexity of a continuous ECE formulation using complete internal data. Accordingly, the main goal of this work is to develop theoretical foundations allowing to explain and justify some of the aforementioned beneficial properties of MECE formulations. We focus on elasticity imaging under time-harmonic conditions and adopt a Hilbert space setting. Moreover, our analysis addresses the general case where BCs may be underdetermined.\enlargethispage*{5ex}

A specific feature of MECE formulations is that first-order optimality conditions lead to fully coupled forward and adjoint problems, rather than unidirectionally coupled problems arising in conventional least squares approaches. The mathematical properties of this coupled system play a fundamental role in the qualitative (e.g. existence of solutions, continuity and differentiability of the objective with respect to parameters) and numerical aspects of MECE-based imaging. In particular, our formulation leads to a well-defined  solution in the underspecified BC case, for which a relevant forward problem is not \emph{a priori} clearly defined. Hence, establishing the well-posedness of the coupled system is our first main goal. Treating this system as a perturbed mixed problem, we prove that it has a unique and stable solution at any frequency, subject to conditions that ensure that the available data more than compensates insufficient information on BCs. This result has important practical implications such as applicability of MECE to partial interior data, convergence of finite element discretizations and differentiability of the reduced MECE objective functional. Secondly, establish that least squares and pure ECE formulations are limiting forms for $\kappa\to0$ and $\kappa\to\infty$, respectively, of MECE formulations. Thirdly, for the latter case (corresponding to exact enforcement of kinematic data), we show that the reduced MECE hessian becomes positive for any parameter perturbation supported on the measurement region. The latter result also has strong practical implications as convexity translates into robustness with respect to the initial guess in gradient-based optimization.

This work is organized as follows. In Section~\ref{setting}, we define the elasticity imaging problem and set notations, introduce the relevant MECE functional and state the relevant first-order stationarity conditions. Section~\ref{sec:2} then addresses the well-posedness of the coupled stationarity problem, allowing for underdetermined BCs. Then, the limiting forms of the stationarity solution as $\kappa\to0$ or $\kappa\to\infty$ are established in Section~\ref{sec:3}. The reduced MECE Hessian, and in particular its asymptotic convexity in the $\kappa\to\infty$ limit, is studied in Sec.~\ref{sec:hessianAnalysis}. Then, numerical studies that support and illustrate our theoretical findings, including a parameter reconstruction example using interior data, are presented in Section~\ref{sec:num}. Finally, most of the proofs are deferred to Section~\ref{sec:proofs}.

\section{Problem setting}
\label{setting}

Let a solid elastic body occupy a bounded and connected domain $\OO\shsubs\Rbb^d$ $(1 \shleq d \shleq 3)$ with boundary $\G$. The time-harmonic motion of this body is governed by (i) the balance equations
\begin{equation}
  \Div\bfsig + \bfb = -\rho\oo^2\bfu \quad \text{in }\OO, \qquad \bfsig\sip\bfn = \bfg \quad \text{on }\GN, \label{balance}
\end{equation}
where $\bfu $ is the displacement field, $\oo$ represents the specified angular frequency, $\rho$ denotes the known mass density, $\bfb$ is a given body force density, $\bfsig$ represents the stress tensor, $\bfg$ and $\GN \subseteq \G$ are the given surface force density (traction) and its support, respectively, and $\bfn$ is the outward unit vector normal to $\G$; (ii) the kinematic compatibility equations
\begin{equation}
  \bfu = \bfze \quad \text{on } \GD, \qquad \bfeps[\bfu] = \pdemi(\bfna\bfu + \bfna\bfu\Tsup ) \quad \text{in } \OO, \label{compatibility}
\end{equation}
where $\bfeps[\bfu]$ denotes the linearized strain tensor associated with $\bfu$ and $\GD\subseteq\G$ is the portion of the boundary where the displacement is prescribed; and (iii) the (linear elastic) constitutive relation 
\begin{equation}
  \bfsig = \CS \dip \bfeps[\bfu] \quad \text{in } \OO,  \label{constitutive}
\end{equation}
where $\CS$ is the fourth-order elasticity tensor field. For simplicity, the kinematical boundary condition specified in~\eqref{compatibility} is homogeneous; the case of a non-homogeneous boundary condition can be treated with minor modifications, as can loading arrangements other than those appearing in~\eqref{balance}.

Here, the boundary subsets $\GN$ and $\GD$ are only required to not overlap (i.e., $\GN \shcap \GD \sheq \emptyset$) and may  cover $\G$ only partially (i.e., $\GN \shcup \GD \subseteq \G$). In other words, we may have $\Gc\shneq\emptyset$, where $\Gc\shdeq\G\shsetm(\GN \shcup \GD)$. In such event, equations \eqref{balance}-\eqref{constitutive} admit multiple solutions, whereas a unique solution exists (except for a countable set of eigenfrequencies $\oo$) when $\GN\cup\GD = \G$. This non-standard boundary condition setting is adopted here to model experimental situations where full-field interior data (to be introduced thereafter) is available while boundary conditions are underspecified.\enlargethispage*{5ex}

\paragraph{Measurements} In addition to the fundamental equations \eqref{balance}-\eqref{constitutive}, we assume availability for the prescribed frequency $\oo$ of measured time-harmonic displacements $\bfu\msup$ in $\OO\msub\shsubs\OO$ (interior data), which is for example usual in elastography applications~\cite{ghosh:17}. The forthcoming analyses can be adapted to accommodate measured displacements on a portion of $\G\shsetm\GD$.

\paragraph{Inverse problem} We address the inverse problem of reconstructing the elasticity tensor field $\CS$ such that (i) the governing equations of motion \eqref{balance}-\eqref{constitutive} are satisfied, and (ii) $\CS$ is consistent with the measurement $\bfu\msup$, under prescribed time-harmonic conditions. Equality $\bfu = \bfu\msup \text{ in } \OO\msub$ between the data $\bfu\msup$ and its model counterpart $\bfu$, implicit in requirement (ii), should hold for error-free model and measurements but will be relaxed in the upcoming formulation to allow for expected uncertainties.

\subsection{Weak formulation of motion}

Let $\lpar \bfa,\bfb \rpar$ denote the $L^2(\OO)$ scalar product of second-order tensor fields $\bfa,\bfb\shin L^2(\OO;\Rbb^{d\times d})$:\vspace*{-1ex}
\begin{equation}
  \lpar \bfa,\bfb \rpar := \iO \bfa \dip\overline{\bfb} \dV = \iO a_{ij} \overline{b}_{ij} \dV,
\end{equation}
where the overline means complex conjugation. Repeated indices imply summation wherever indicial notation is used. The $L^2(\OO)$ scalar product of vector or scalar fields follows the same notational style with suitable adjustments; so does the $L^2(\G)$ scalar product of fields defined on a surface $\G$, denoted as $\lpar \bfa,\bfb \rpar_\G$. The weak formulation of the balance equations~\eqref{balance} then reads
\begin{equation} \label{balance:weak}
  \lpar \bfsig, \bfeps[\bfwT] \rpar - \oo^2 \lpar \rho \bfu,\bfwT \rpar - \lpar \bfsig \sip \bfn, \bfwT \rpar_{\G\setminus\GN}
  = \lbra \bff, \bfwT \rbra_{\Vcal',\Vcal} \qquad \text{for all } \bfwT \in \Vcal\shdeq H^1(\OO;\Rbb^d),
\end{equation}
where the continuous linear functional $\bff\shin\Vcal'$ embodies all given excitations; for example $\lbra \bff, \bfwT \rbra_{\Vcal',\Vcal}:=\lpar \bfb, \bfw \rpar + \lpar \bfg, \bfw \rpar_{\GN}$ for the given force densities $\bfb,\bfg$ appearing in~\eqref{balance}. In~\eqref{balance:weak} and thereafter, $\langle \cdot,\cdot \rangle_{X',X}$ denotes the duality pairing between a Hilbert space $X$ an its dual $X'$ (i.e. $\lbra \bff,\bfwT \rbra_{X',X}$ evaluates the linear functional $\bff\shin X'$ at $\bfwT\shin X$). In addition, let the spaces, $\Ucal$, $\Scal(\bfu)$ and $\Qcal$ of kinematically admissible displacements, dynamically admissible stresses and admissible elasticity tensor fields, respectively, be defined as
\begin{subequations}
\begin{align}
  \Ucal &:= \lcb \bfv\shin \Vcal\sheq H^1(\OO;\Rbb^d), \; \bfv\sheq\bfze \text{ on } \GD \rcb \label{Ucal:def} \\
  \Scal(\bfu) &:= \big\{ \bfsig : \ \bfsig\in \Scal, \text{ eqn.~\eqref{balance:weak} holds} \big\}, \qquad \text{with}\;\;
  \Scal = \lcb \bfsig\shin L^2(\OO;\Rbb^{d\times d}\sym),\ \Div\bfsig\shin  L^2(\OO;\Rbb^d)\rcb \label{Scal:def} \\
  \Qcal &:= \Lcb \CS \in L^\infty(\OO;\mathbbm{Q}) \mid \, \bfeps\dip\CS(\bfx)\dip\bfeps \shgeq c_0 \bfeps\dip\bfeps, \ \ \text{for all }\bfx\shin\OO \text{ and } \bfeps\shin\Rbb^{d\times d}\sym, \, \bfeps \neq \bfze \Rcb \label{Z:def}
\end{align}
\end{subequations}
(where $\mathbbm{Q}$ denotes the finite-dimensional vector space of fourth-order tensors $\CS$ with major and minor symmetries, i.e. $\Ccal_{ijkl} = \Ccal_{klij} = \Ccal_{jilk}$, and $c_0$ is some positive constant). Finally, the mass density field $\rho\shin L^\infty(\OO)$ must be bounded below by a positive constant.\enlargethispage*{3ex}

\subsection{MECE optimization problem}

We follow the inversion approach initiated in~\cite{B-2012-2,B-2013-8}, whereby the foregoing inverse problem is formulated as an optimization problem in which the unknown elasticity tensor field $\CS$ is estimated by minimizing an objective function that additively combines two error terms: (i) an error in constitutive equation (ECE) functional~\cite{ladeveze:83} defined by
\begin{equation} \label{eqn:ECEdefinition}
  \Ecal(\bfu,\bfsig,\CS) := \demi\iO (\bfsig-\CS\dip\bfeps[\bfu])\dip\CS^{-1}\dip(\bfsig-\CS\dip\bfeps[\bfu]) \dV
\end{equation}
that measures (in units of energy) the discrepancy in the constitutive equation~\eqref{constitutive}, and (ii) a quadratic error term $\Dcal(\bfu\shm\bfu\msup,\bfu\shm\bfu\msup)$, where $\Dcal$ is a positive bilinear form, that quantifies the mismatch between the predicted (or model) displacements and the measured ones. This objective function, hereafter called the modified ECE (MECE) functional, is defined by
\begin{equation}\label{eqn:LambdaDefinition}
  \Lambdak(\bfu,\bfsig,\CS) := \Ecal(\bfu,\bfsig,\CS) + \kappa\Dcal(\bfu\shm\bfu\msup,\bfu\shm\bfu\msup),
\end{equation}
where $\kappa\shg0$ is a weight parameter. For a given triple $(\bfu,\bfsig,\CS) \in \Ucal \times \Scal(\bfu) \times \Qcal$ (with these sets as defined in~(\ref{Ucal:def},\ref{Scal:def},\ref{Z:def})), $\Lambdak(\bfu,\bfsig,\CS)$ defines a quantitative measure of the consistency of these variables with (i) the constitutive equation, and (ii) the available measurements $\bfu\msup$. Accordingly, the elasticity imaging inverse problem is formulated as the PDE-constrained optimization problem
\begin{equation}\label{eqn:InvProbDefinition}
  (\bfu,\bfsig,\CS) :=  \argmin_{\bfv\in\Ucal, \, \bftau\in\Scal(\bfv), \, \AS\in\Qcal} \, \Lambdak(\bfv,\bftau,\AS). 
\end{equation}

\subsection{Stationarity conditions}

We now collect the first-order stationarity conditions for the minimization problem~\eqref{eqn:InvProbDefinition}. To this end, let the Lagrangian $
\Lcal: \Ucal\shtimes\Vcal\shtimes\Scal\shtimes\Qcal \to \Rbb$ be defined as\enlargethispage*{7ex}
\begin{equation} \label{Lagr:def}
  \Lcal(\bfu,\bfw,\bfsig,\CS) := \Lambdak(\bfu,\bfsig,\CS)
 - \Re \lcb \lpar \bfsig, \bfeps[\bfw] \rpar - \oo^2 \lpar \rho \bfu,\bfw \rpar - \lpar \bfsig \sip \bfn, \bfw \rpar_{\G\setminus\GN} - \lbra \bff, \bfw \rbra_{\Vcal',\Vcal} \rcb,
\end{equation}
where (i) $\bfw \shin \Vcal$ plays the role of the Lagrange multiplier, (ii) the constraint is the dynamic balance equation~\eqref{balance:weak}, and (iii) the term $\lpar \bfsig\sip\bfn, \bfw \rpar_{\G\setminus\GN}$ is crucial for the case in which $\GN \shcup \GD \shneq \Gamma$ (i.e., boundary conditions are not prescribed over the entire boundary). The first-order optimality conditions for the minimization problem ~\eqref{eqn:InvProbDefinition} are then given, in terms of first-order G\^ateaux derivatives of $\Lcal$, by
\begin{subequations}
\begin{align}
  \partial_{\sigma}\Lcal(\bfu,\bfw,\bfsig,\CS)[\bfsigT] &= 0 \quad\text{for all }\bfsigT\shin\Scal, \label{eq:DsigL} \\
  \partial_w\Lcal(\bfu,\bfw,\bfsig,\CS)[\bfwT] &= 0 \quad\text{for all }\bfwT\shin\Vcal, \label{eq:DwL} \\
  \partial_u\Lcal(\bfu,\bfw,\bfsig,\CS)[\bfuT] &= 0 \quad\text{for all }\bfuT\shin\Ucal, \label{eq:DuL} \\
  \partial_{\CS}\Lcal(\bfu,\bfw,\bfsig,\CS)[\CSh] &= 0 \quad\text{for all }\CSh \shin\Qcal, \label{eq:DcsL} 
\end{align}
\end{subequations}
where the test functions $\bfuT$ are constrained, consistently with the assumption $\bfu\shin\Ucal$, whereas $\bfwT$ are for now unconstrained. We begin by exploiting conditions~\eqref{eq:DsigL} and~\eqref{eq:DwL}. Condition~\eqref{eq:DsigL} yields
\begin{equation}
  \Re \lcb \lpar \bfsigT, \CS^{-1}\dip\bfsig - \bfeps[\bfu\shp\bfw] \rpar \rcb
 + \Re \lcb \lpar \bfsigT\sip\bfn,\bfw \rpar_{\G\setminus\GN} \rcb = 0 \qquad\text{for all }\bfsigT\shin\Scal. \label{eq:DsigL:2}
\end{equation}
while condition~\eqref{eq:DwL} simply restates the balance constraint~\eqref{balance:weak}. Using first the subspace of $\Scal$ containing all $\bfsigT$ with vanishing trace on $\G$, equation~\eqref{eq:DsigL:2} provides
\begin{equation}
  \bfsig = \CS\dip\bfeps[\bfu\shp\bfw] \quad\text{in }\OO, \label{stat:sigma:1}
\end{equation}
The second term in~\eqref{eq:DsigL:2} then enforces the essential condition
\begin{equation}
  \bfw=\bfze \quad\text{on }\G\shsetm\GN, \label{stat:sigma:2}
\end{equation}
the stress-type unknown $\bfsig\sip\bfn\mid_{\G\setminus\GN}$ in the fourth term of~\eqref{balance:weak} being the associated Lagrange multiplier, see e.g.~\cite{babuska:73}. We elect in this work to treat condition~\eqref{stat:sigma:2} via elimination, and hence seek $\bfw$ in the function space $\Wcal$ defined by
\begin{equation}
  \Wcal = \lcb \bfv\shin\Vcal, \; \bfv\sheq\bfze \text{ on } \GD\shcup\Gc \rcb. \label{Wcal:def}
\end{equation}
For the same reason, we now restrict equation~\eqref{balance:weak} to test functions $\bfwT\shin\Wcal$, thereby also eliminating the Lagrange multiplier $\bfsig\bfn\mid_{\G\setminus\GN}$, and obtain, after using~(\ref{stat:sigma:1}):
\begin{equation}
  \lpar \bfeps[\bfu\shp\bfw], \CS\dip\bfeps[\bfwT] \rpar - \oo^2 \lpar \rho\bfu,\bfwT \rpar = \Fcal(\bfwT), \quad \text{for all } \bfwT\in\Wcal.  \label{stat:w}
\end{equation}

We note that $\Wcal \subseteq \Ucal$; more precisely, $\Wcal\shsubs\Ucal$ (with strict inclusion) when $\Gamma_N\shcup\Gamma_D\shneq\Gamma$ (insufficient boundary data), while $\Wcal\sheq\Ucal$ when $\Gamma_N\shcup\Gamma_D\sheq\Gamma$ (sufficient boundary data).

Then, moving to the third condition~\eqref{eq:DuL}, we find
\begin{equation}
  \partial_u\Lcal(\bfu,\bfw,\bfsig,\CS)[\bfuT]
 = \Re \lcb \lpar \bfeps[\bfu]-\CS^{-1}\dip\bfsig), \CS\dip\bfeps[\bfuT] \rpar + \kappa \Dcal\lpar \bfu\shm\bfu\msup, \bfuT \rpar
  + \oo^2\lpar \rho \bfuT, \bfw \rpar \rcb,
\end{equation}
so that, on substituting~\eqref{stat:sigma:1} and rearranging, \eqref{eq:DuL} yields
\begin{equation}
\lpar \bfeps[\bfw], \CS\dip\bfeps[\bfuT] \rpar - \oo^2\lpar \rho \bfuT, \bfw \rpar - \kappa \Dcal\lpar \bfu\shm\bfu\msup, \bfuT \rpar  = 0 \quad\text{for all }\bfuT\shin\Ucal, \label{stat:u}
\end{equation}
Finally, condition~\eqref{eq:DcsL} yields the following equation, which is nonlinear in $\CS$:
\begin{equation}
\lpar \bfeps[\bfu]\tens\bfeps[\bfu] - (\CS^{-1}\dip\bfsig) \tens (\CS^{-1}\dip\bfsig) \,,\, \CSh \rpar = 0
\qquad \text{for all }\CSh\shin\Qcal \label{stat:C}
\end{equation}

Concluding, the first-order stationarity conditions for the minimization problem~\eqref{eqn:InvProbDefinition} consist of the coupled weak equations~\eqref{stat:w}, \eqref{stat:u}, \eqref{stat:C} governing $(\bfu,\bfw,\CS)$, with $\bfsig$ then given explicitly by~\eqref{stat:sigma:1}.\enlargethispage*{3ex}

\subsection{Reduced optimization problem}

In a full-space approach, the stationarity system~(\ref{stat:sigma:1},\ref{stat:w},\ref{stat:u},\ref{stat:C}) is solved (iteratively) as a whole, as done e.g. in~\cite{epanomeritakis:08}. Alternatively, a reduced-space approach can be formulated from observing that equations~(\ref{stat:w},\ref{stat:u}) define for given $\CS$ a \emph{linear} (coupled) problem for $(\bfu,\bfw)$. Then, letting $(\bfu,\bfw)\sheq(\bfu_\Ccal,\bfw_\Ccal)$ solve equations~(\ref{stat:w},\ref{stat:u}) for a given $\CS \in \Qcal$, reduced versions of the ECE and data-misfit components of $\Lambdak$ that depend only on $\CS$ can be defined by
\begin{equation}
\EcalR(\CS) \shdeq \Ecal(\bfu,\bfsig,\CS) = \lpar \bfeps[\bfw],\CS\dip\bfeps[\bfw] \rpar, \qquad
\DcalR(\CS) \shdeq \Dcal(\bfu\shm\bfu\msup,\bfu\shm\bfu\msup), \label{ED:reduced}
\end{equation}
where we have used $\bfsig\sheq\CS\dip[\bfu\shp\bfw]$, see~\eqref{stat:sigma:1}. Then, problem~\eqref{eqn:InvProbDefinition} can be recast in reduced minimization form as
\begin{equation}
\min_{\CS\in\Qcal} \LambdaRk(\CS), \qquad
\LambdaRk(\CS) := \Lambdak(\bfu,\bfsig,\CS) = \EcalR(\CS) + \kappa \DcalR(\CS), \label{min:C}
\end{equation}
We observe that the functional $\Lambdak(\bfu,\bfsig,\CS)$ is, for any fixed $\CS\shin\Qcal$, differentiable and convex in $(\bfu,\bfsig)$ due to the requisite ellipticity of $\CS$ (see~\eqref{Z:def}). Problem~(\ref{stat:w},\ref{stat:u}) is hence equivalent to solving the partial minimization of $\Lambdak(\bfu,\bfsig,\CS)$ with $\CS$ given, and we have
\begin{equation}\label{eqn:MechanicalFieldsUpdate}
  \LambdaRk(\CS) = \argmin_{\bfu\in \Ucal,\, \bfsig\in\Scal(\bfu)} \, \Lambdak(\bfu,\bfsig,\CS).
\end{equation}
For that reason, we will henceforth refer to problem~(\ref{stat:w},\ref{stat:u}) as the stationarity system.

In this work, we adopt and study this reduced-space approach, which thanks to the characterization~\eqref{eqn:MechanicalFieldsUpdate} can in fact be shown to be equivalent to the full-space approach in the following sense:\enlargethispage*{5ex}
\begin{lemma}
Problems~\eqref{min:C} and~\eqref{eqn:InvProbDefinition} are equivalent: a solution to~\eqref{min:C} also solves~\eqref{eqn:InvProbDefinition} and vice versa.
\end{lemma}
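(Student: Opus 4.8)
The plan is to exploit the partial-minimization structure already exhibited in~\eqref{eqn:MechanicalFieldsUpdate}, which identifies $\LambdaRk(\CS)$ with the value of the inner minimization of $\Lambdak(\cdot,\cdot,\CS)$ over the mechanical fields at fixed $\CS$, attained at the stationarity fields. The entire content of the lemma then reduces to the elementary fact that minimizing a function jointly over all its arguments coincides with first minimizing over a subset of the arguments and then over the remaining ones. The role of~\eqref{eqn:MechanicalFieldsUpdate} is precisely to guarantee that this inner minimization is attained and characterized by the stationarity system~(\ref{stat:w},\ref{stat:u}); this is where the convexity of $\Lambdak(\bfu,\bfsig,\CS)$ in $(\bfu,\bfsig)$ at fixed $\CS$ --- granted by the ellipticity~\eqref{Z:def} of $\CS$ --- does the real work.

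First I would record the feasibility structure underlying~\eqref{eqn:InvProbDefinition}: the admissible set of triples factorizes, since the coupling $\bftau\in\Scal(\bfv)$ involves only $(\bfv,\bftau)$ --- equation~\eqref{balance:weak} does not contain $\AS$ --- while the admissibility $\AS\in\Qcal$ is independent of $(\bfv,\bftau)$. Consequently the joint infimum splits as
\[
\inf_{\AS\in\Qcal}\ \inf_{\bfv\in\Ucal,\,\bftau\in\Scal(\bfv)} \Lambdak(\bfv,\bftau,\AS)
= \inf_{\AS\in\Qcal,\,\bfv\in\Ucal,\,\bftau\in\Scal(\bfv)} \Lambdak(\bfv,\bftau,\AS).
\]
By~\eqref{eqn:MechanicalFieldsUpdate}, the inner infimum on the left equals $\LambdaRk(\AS)$ and is attained at the pair $(\bfu_\Ccal,\bfsig_\Ccal)$ produced by the stationarity system, with $\bfsig_\Ccal$ given by~\eqref{stat:sigma:1}. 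Hence the left-hand side is exactly the reduced objective minimized in~\eqref{min:C}, so the two problems share the same optimal value.

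Next I would establish the correspondence of minimizers. For the reduced $\Rightarrow$ full direction, if $\CS$ solves~\eqref{min:C} I would take the associated stationarity fields $(\bfu_\Ccal,\bfsig_\Ccal)$; then $\Lambdak(\bfu_\Ccal,\bfsig_\Ccal,\CS)=\LambdaRk(\CS)$ equals the common optimal value, so the triple $(\bfu_\Ccal,\bfsig_\Ccal,\CS)$ attains the joint infimum and thus solves~\eqref{eqn:InvProbDefinition}. For the full $\Rightarrow$ reduced direction, if $(\bfu,\bfsig,\CS)$ solves~\eqref{eqn:InvProbDefinition}, then $(\bfu,\bfsig)$ must be an inner minimizer at that fixed $\CS$ --- otherwise a feasible pair with strictly smaller objective would contradict joint optimality --- whence $\Lambdak(\bfu,\bfsig,\CS)=\LambdaRk(\CS)$ by~\eqref{eqn:MechanicalFieldsUpdate}; combined with the value equality just proved, this forces $\LambdaRk(\CS)=\min_{\AS\in\Qcal}\LambdaRk(\AS)$, i.e. $\CS$ solves~\eqref{min:C}.

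I do not expect any serious obstacle, as this is the standard ``reduced-space equals full-space'' equivalence for a partially minimized objective. The only point requiring care is that equality of objective values does not by itself identify the mechanical fields of a full-space minimizer with the \emph{specific} stationarity fields $(\bfu_\Ccal,\bfsig_\Ccal)$ unless the inner minimizer is unique. Such uniqueness is not needed for the equivalence of the two \emph{problems}, which concerns optimal values together with the minimizing parameter $\CS$; I would therefore phrase the correspondence purely in terms of objective values, invoking~\eqref{eqn:MechanicalFieldsUpdate} rather than any uniqueness of $(\bfu,\bfsig)$.
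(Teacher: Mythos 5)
Your proposal is correct and follows essentially the same route as the paper's proof: both directions rest on the partial-minimization identity~\eqref{eqn:MechanicalFieldsUpdate} and a comparison of objective values between the reduced problem~\eqref{min:C} and the joint problem~\eqref{eqn:InvProbDefinition}. Your explicit remark that the argument should be phrased in terms of values rather than identifying the mechanical fields with the specific stationarity solution is a slightly more careful rendering of the paper's converse step, but it is not a different method.
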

\begin{proof}
First, let $\CSs$ solve the reduced problem \eqref{min:C}, and $(\bfus,\bfws)$ solve~(\ref{stat:w}-\ref{stat:u}) for $\CS\sheq\CSs$. Then:
\begin{equation}
  \LambdaRk(\CSs)
 = \Lambdak(\bfus,\bfws,\CSs) \leq \Lambdak(\bfu_\Ccal,\bfw_\Ccal,\CS) \leq \Lambdak(\bfu,\bfw,\CS)
\end{equation}
for any $(\bfu,\bfw,\CS)\in\Ucal\shtimes\Wcal\shtimes\Qcal$, where the first inequality stems from~\eqref{min:C} and the second from~\eqref{eqn:MechanicalFieldsUpdate}. Hence, $(\bfus,\bfws,\CSs)$ solves~\eqref{eqn:InvProbDefinition}.

Conversely, let $(\bfu^{\sharp},\bfw^{\sharp},\CS^{\sharp})$ solve problem~\eqref{eqn:InvProbDefinition}. This triple then verifies~(\ref{stat:w}-\ref{stat:u}) with $\CS\sheq\CS^{\sharp}$, i.e. has the form $(\bfu_{\Ccal^{\sharp}},\bfw_{\Ccal^{\sharp}},\CS^{\sharp})$, so that $\Lambdak(\bfu^{\sharp},\bfw^{\sharp},\CS^{\sharp})=\LambdaRk(\CS^{\sharp})$. Since in addition problem~\eqref{min:C} consists in minimizing $\Lambdak$ over a subset of $\Ucal\shtimes\Wcal\shtimes\Qcal$, $\CS^{\sharp}$ is a minimizer of $\LambdaRk$. The proof is complete.
\end{proof}

\subsection{Coupled stationarity system: a key component of MECE-based imaging}
\label{remarks:coupled}

The stationarity system~(\ref{stat:w},\ref{stat:u}) plays for several reasons a fundamental role in MECE-based imaging:
\begin{compactenum}[(a)]
\item The definition~\eqref{min:C} of a reduced-space approach needs well-posedness of the stationarity system;
\item The present MECE framework aims at treating situations with underspecified BCs, for which a relevant forward problem is not \emph{a priori} clearly defined, in contrast with usual PDE-constrained inversion methods. Instead, the field $\bfu$ acts as a forward solution, provided problem~(\ref{stat:w},\ref{stat:u}) is well-posed. It is in particular important to determine conditions on the interior data ensuring that it compensates the insufficient BC information and make problem~(\ref{stat:w},\ref{stat:u}) well-posed.
\item In cases involving underspecified BCs, the relevant forward problem is not \emph{a priori} clearly defined. Instead, the  stationarity system~(\ref{stat:w},\ref{stat:u}) acts as a combination of the forward and adjoint problems. The latter are coupled in the present framework, while they are usually uncoupled in inverse problems solved using standard $L^2$ minimization~\cite{epanomeritakis:08,metivier:12,hinze:08} (see Sec.~\ref{sec:small-k-obj} for further elaboration).
\item Well-posedness of the stationarity system implies that the solution mapping $\CS\mapsto(\bfu,\bfw)$ is Fr\'echet differentiable (by virtue of the implicit function theorem, e.g.~\cite[Thm.~7.13-1]{ciarlet:book}, whose applicability can then readily be verified). Hence, $\LambdaRk(\CS)$ is also differentiable in that case, since $\Lambdak(\bfu,\bfsig,\CS)$ is, upon expressing $\sigma$ with~\eqref{stat:sigma:1}, quadratic in $(\bfu,\bfw)$ and affine in $\CS$.
\item In turn, continuity of $\LambdaRk(\CS)$ is one of two key ingredients needed to establish existence of solutions to Problem ~\eqref{min:C}. The other key ingredient would be establishing the existence of minimizing sequences in $\Qcal$ that contain convergent subsequences; it is outside of the scope of this work. Interested readers are referred to \cite{gock:khan:09} for an in-depth study of existence of solutions in problems involving functionals similar to those studied herein.
\end{compactenum}
The above considerations show the importance of establishing the well-posednedess of the stationarity system~(\ref{stat:w},\ref{stat:u}); this is the goal of the next section.\enlargethispage*{5ex}

\section{Analysis of the stationarity problem}
\label{sec:2}

In preparation for the forthcoming analysis, we rewrite the stationarity problems ~(\ref{stat:u}-\ref{stat:w}) in the form
\begin{equation}
\text{Find }(\bfw,\bfu)\in\Wcal\shtimes\Ucal, \quad \left\{
\begin{aligned}
  \text{(a)} && \Acal(\bfw,\bfwT,\CS) + \Bcal(\bfu,\bfwT,\CS) &= \lbra \bff, \bfwT \rbra_{\Wcal',\Wcal} && \text{for all }\bfwT\shin\Wcal \\
  \text{(b)} && \Bcal(\bfuT,\bfw,\CS) - \kappa \Dcal(\bfu,\bfuT) &= -\kappa\lbra \bfd, \bfuT \rbra_{\Ucal',\Ucal}  && \text{for all }\bfuT\shin\Ucal
\end{aligned} \right. \label{stat:pb}
\end{equation}
where $\Acal$ is the elastic stiffness bilinear form given for any $(\bfv,\bfvT)\shin\Ucal\shtimes\Ucal$ by $\Acal(\bfv,\bfvT,\CS)\shdeq\lpar \bfeps[\bfv],\CS\dip\bfeps[\bfvT] \rpar$, $\Bcal\shdeq\Acal-\oo^2(\rho\cdot,\cdot)$ is the dynamic stiffness bilinear form, the linear functional $\bfd\shin\Ucal'$ defined by $\lbra \bfd, \bfuT \rbra_{\Ucal',\Ucal} = \Dcal(\bfu\msup,\bfuT)$ incorporates the kinematic data, the linear functional $\bff\in\Wcal'$ synthesizes all given applied loads, and the function spaces $\Ucal,\Wcal$ are respectively defined by~\eqref{Ucal:def} and~\eqref{Wcal:def}. Moreover, we endow $\Ucal$ and $\Wcal$ with the inner product $(\cdot,\cdot)_{\oo}$ and norm $\|\cdot\|_{\oo}$ defined by
\[
  (\bfv,\bfvT)_{\oo} = \Acal(\bfv,\bfvT) + \rho\oo^2(\bfv,\bfvT), \qquad \|\bfv\|_{\oo}^2 := (\bfv,\bfv)_{\oo}.
\]
The norm $\|\cdot\|_{\oo}$ is equivalent (for fixed $\oo\shg0$) to the standard $H^1$ norm (if $|\GD|\shg0$, the alternative definition  $\|\bfuT\|^2=\Acal(\bfuT,\bfuT)$ is also suitable). The bilinear forms $\Acal$, $\Bcal$ and $\Dcal$ are assumed to be continuous for this norm, i.e. there exist constants $a\shg0$, $b\shg0$ and $d\shg0$ such that
\[
  \Acal(\bfw,\bfwT) \leq a\,\|\bfw\|_{\oo}\|\bfwT\|_{\oo}, \quad \Bcal(\bfu,\bfwT) \leq b\,\|\bfu\|_{\oo}\|\bfwT\|_{\oo}, \quad \Dcal(\bfu,\bfuT) \leq d\,\|\bfu\|_{\oo}\|\bfuT\|_{\oo}
\]
for all $\bfw,\bfwT\in\Wcal$ and $\bfu,\bfuT\in\Ucal$. In what follows, we will denote by $A, B, D$ the linear operators implicitly defined by the bilinear forms $\Acal,\Bcal,\Dcal$, e.g.:
\[
  B: \Ucal\to\Wcal{}', \;\; \lbra B\bfu,\bfwT \rbra_{\Wcal',\Wcal} = \Bcal(\bfu,\bfwT) \;\; \text{for all } (\bfu,\bfwT)\shin\Ucal\shtimes\Wcal,
\]
Moreover, $B\tsup:\Wcal\to\Ucal'$ will denote the transposed operator associated to $B$, defined by
\[
  \lbra \bfuT,B\tsup\bfw \rbra_{\Ucal,\Ucal'} = \lbra B\bfuT,\bfw \rbra_{\Wcal',\Wcal} = \Bcal(\bfuT,\bfw) \qquad \text{for all } (\bfuT,\bfw)\shin\Ucal\shtimes\Wcal,
\]
and $N(F)$ will denotes the null space of a linear operator $F$.

\subsection{Well-posedness of the coupled problem}

We regard problem~\eqref{stat:pb} as a perturbed mixed problem~\cite[Sec. 4.3]{boffi}. It can in fact be given the form
\[
  \text{Find $\sfU\shin\Ubb$ \ such that} \quad G(\sfU,\sfUT,\CS) = \lbra \sfF,\sfUT \rbra_{\Ubb',\Ubb} \qquad \text{for all } \sfUT\shin\Ubb,
\]
where $\Ubb$ is the Hilbert space $\Ubb\shdeq\Ucal\shtimes\Wcal$ equipped with the inner product defined by $(\sfU,\sfUT)_{\Ubb}=(\bfu,\bfuT)_{\oo}+(\bfw,\bfwT)_{\oo}$ (the associated norm being given by $\|\sfU\|^2_{\Ubb}=(\sfU,\sfU)_{\Ubb}$) and with the (continuous, symmetric) bilinear form $G:\Ubb\shtimes\Ubb\to\Rbb$ and the (continuous) linear form $\sfF\in\Ubb'$ defined by
\begin{align}
  G(\sfU,\sfUT,\CS) &:= \Acal(\bfw,\bfwT,\CS) + \Bcal(\bfu,\bfwT,\CS) + \Bcal(\bfuT,\bfw,\CS) - \kappa \Dcal(\bfu,\bfuT), \\
  \lbra \sfF,\sfUT \rbra_{\Ubb',\Ubb} &:= \lbra \bff,\bfwT \rbra_{\Wcal',\Wcal} - \kappa \lbra \bfd,\bfuT \rbra_{\Ucal',\Ucal}.
\end{align}
\begin{remark}\label{sign:indefinite}
The quadratic form $\sfU\mapsto G(\sfU,\sfU)$ with $\kappa\shgeq0$ is not sign-definite: $G(\sfU,\sfU)\sheq \Acal(\bfw,\bfw)\shgeq0$ for $\sfU\sheq(\bfw,\bfze)$, and $G(\sfU,\sfU)\sheq -\Acal(\bfw,\bfw)\shm \kappa \Dcal(\bfu,\bfu)\shleq0$ for any $(\bfw,\bfu)$ satisfying~(\ref{stat:pb}a) with $\bff\sheq\bfze$.
\end{remark}
Let $\sfG:\Ubb\to\Ubb'$ be the (bounded) linear operator associated to the bilinear form $G$. The above definition of $G$ implies $\sfG\sheq\sfG\tsup$. The variational problem~\eqref{stat:pb} can be shown to be well-posed by checking the applicability of the closed range theorem (see e.g.~\cite[Thm. 5.11-6]{ciarlet:book}), which here has the form:

\begin{lemma}\label{bounding}
Let $\Ubb$ be a Hilbert space and $\sfG:\Ubb\to\Ubb'$ a bounded linear operator such that $\sfG=\sfG\tsup$. 
$\sfG$ is invertible with bounded inverse if and only if there is a constant $\eta\shg0$ such that $\|\sfG\sfU\|_{\Ubb'}\geq \eta\|\sfU\|_{\Ubb}$ for any $\sfU\shin\Ubb$, in which case $\|\sfG^{-1}\|\leq\eta^{-1}$.
\end{lemma}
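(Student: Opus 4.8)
The plan is to read the statement as a concrete instance of the bounded-inverse theorem for Hilbert-space operators, in which the symmetry hypothesis $\sfG\sheq\sfG\tsup$ is precisely what upgrades the one-sided, coercivity-type bound into genuine invertibility. The forward implication is immediate: if $\sfG$ is invertible with bounded inverse, then for any $\sfU\shin\Ubb$ I would write $\sfU\sheq\sfG^{-1}(\sfG\sfU)$ and estimate $\|\sfU\|_{\Ubb}\shleq\|\sfG^{-1}\|\,\|\sfG\sfU\|_{\Ubb'}$, which is the desired inequality with $\eta\sheq\|\sfG^{-1}\|^{-1}$. This direction uses neither completeness nor symmetry.

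For the converse, I would assume the bound $\|\sfG\sfU\|_{\Ubb'}\shgeq\eta\|\sfU\|_{\Ubb}$ and establish invertibility in three steps. First, \emph{injectivity}: if $\sfG\sfU\sheq0$, the bound forces $\sfU\sheq0$, so $N(\sfG)\sheq\{0\}$. Second, \emph{closedness of the range}: given $\sfU_n$ with $\sfG\sfU_n\to\sfF$ in $\Ubb'$, applying the bound to differences gives $\|\sfU_n\shm\sfU_m\|_{\Ubb}\shleq\eta^{-1}\|\sfG\sfU_n\shm\sfG\sfU_m\|_{\Ubb'}$, so $(\sfU_n)$ is Cauchy; completeness of $\Ubb$ yields a limit $\sfU$, and continuity of $\sfG$ gives $\sfG\sfU\sheq\sfF$, whence $\sfF\shin R(\sfG)$ and $R(\sfG)$ is closed. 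Third, \emph{surjectivity}: by reflexivity of the Hilbert space $\Ubb$, density of $R(\sfG)$ in $\Ubb'$ is equivalent to triviality of its annihilator $R(\sfG)^{\perp}\shsubs\Ubb$, and the elementary duality identity gives $R(\sfG)^{\perp}\sheq N(\sfG\tsup)$. Since $\sfG\tsup\sheq\sfG$, the first step yields $N(\sfG\tsup)\sheq N(\sfG)\sheq\{0\}$, so $R(\sfG)$ is dense; combined with its closedness, $R(\sfG)\sheq\Ubb'$.

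These three facts make $\sfG$ a continuous bijection of $\Ubb$ onto $\Ubb'$, so $\sfG^{-1}$ is well defined and linear. Its boundedness, together with the sharp estimate $\|\sfG^{-1}\|\shleq\eta^{-1}$, then follows directly by applying the hypothesized bound to $\sfU\sheq\sfG^{-1}\sfF$, which gives $\|\sfG^{-1}\sfF\|_{\Ubb}\shleq\eta^{-1}\|\sfF\|_{\Ubb'}$ for every $\sfF\shin\Ubb'$; no appeal to the open mapping theorem is then needed.

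I expect the surjectivity step to be the only genuine obstacle. The coercivity-type bound on its own delivers only injectivity and a closed range --- the unilateral shift is the standard reminder that these fall short of surjectivity --- and it is exactly the self-adjointness $\sfG\sheq\sfG\tsup$ that identifies the annihilator of the range with the (trivial) kernel and thereby closes the gap. Everything else reduces to completeness of $\Ubb$ and continuity of $\sfG$.
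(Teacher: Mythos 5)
Your proof is correct and complete. The paper itself gives no proof of Lemma~\ref{bounding}: it simply states it as the relevant form of the closed range theorem and cites \cite[Thm.~5.11-6]{ciarlet:book}. Your argument is the standard self-contained proof of that cited result, and it supplies exactly the right ingredients in the right places: the lower bound gives injectivity and (via the Cauchy-sequence argument and completeness of $\Ubb$) a closed range, while the symmetry $\sfG\sheq\sfG\tsup$ is used precisely where it must be, namely to identify $R(\sfG)^{\perp}$ with $N(\sfG\tsup)\sheq N(\sfG)\sheq\{0\}$ and hence obtain density, so that surjectivity follows without any appeal to the open mapping theorem; the bound $\|\sfG^{-1}\|\shleq\eta^{-1}$ then drops out by applying the hypothesis to $\sfU\sheq\sfG^{-1}\sfF$. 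Your remark that coercivity-type lower bounds alone (witness the unilateral shift) give only injectivity and closed range, and that self-adjointness is what closes the surjectivity gap, is exactly the point of including the hypothesis $\sfG\sheq\sfG\tsup$ in the statement --- a hypothesis the paper needs because, as Remark~\ref{sign:indefinite} notes, the quadratic form $\sfU\mapsto G(\sfU,\sfU)$ is not sign-definite, so the Lax--Milgram route is unavailable.
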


The null spaces $\Hcal\shdeq N(B)$ and $\Kcal\shdeq N(B\tsup)$ of $B$ and $B\tsup$ play an important role for studying the well-posedness of problem~\eqref{stat:pb}, so we now characterize them. To this end, let $\Zcal$ denote the subspace of $\Ucal$ such that any $\bfu\shin\Zcal$ solves the homogeneous problem
\begin{equation}
  \Acal(\bfu,\bfuT) - \oo^2 (\rho\bfu,\bfuT) = 0, \quad\text{i.e. } \Bcal(\bfu,\bfuT)=0 \qquad \text{for all }\bfuT\shin\Ucal \label{W:eigen}
\end{equation}
As is well-known, $\Zcal\sheq\{\bfze\}$ unless $\oo$ belongs to the countable set of eigenvalues for problem~\eqref{W:eigen}, in which case $\Zcal$ is finite-dimensional.\enlargethispage*{3ex}

An element $\bfu$ of $\Hcal\shsubs\Ucal$ is characterized by $\Bcal(\bfu,\bfwT)=0$ for all $\bfwT\shin\Wcal$. On applying the first Green identity in $\Acal(\cdot,\cdot)$, the strong form of this variational problem is found to be
\begin{subequations}
\begin{equation}
  -\Div(\CS\dip\bfeps[\bfu]) - \rho\oo^2\bfu =\bfze \;\;\text{in }\OO, \qquad \bft[\bfu]=\bfze \;\; \text{on $\GN$}, \qquad \bfu = \bfze \;\; \text{on $\GD$}. \label{strong:u}
\end{equation}
Similarly, an element $\bfw$ of $\Kcal\shsubs\Wcal$ is characterized by $\Bcal(\bfu,\bfw)=0$ for all $\bfu\shin\Ucal$, i.e.
\begin{equation}
  -\Div(\CS\dip\bfeps[\bfw]) - \rho\oo^2\bfw=\bfze \;\;\text{in }\OO, \qquad \bft[\bfw]=\bfze \;\; \text{on $\GN\shcup\Gc$}, \qquad \bfw = \bfze \;\; \text{on $\GD\shcup\Gc$} \label{strong:ub}
\end{equation}
\end{subequations}
in strong form. Then, two cases arise:

\textsl{Case (i).} If $\Wcal=\Ucal$ (i.e. $\GD\shcup\GN\sheq\G$), problems~\eqref{strong:u} and~\eqref{strong:ub} are identical and coincide with problem~\eqref{W:eigen}; therefore $\Hcal=\Kcal=\Zcal$.

\textsl{Case (ii).} $\Wcal\subsetneq\Ucal$ (i.e. $\Gc\shneq\emptyset$), the situation is completely different as the boundary conditions are undetermined in~\eqref{strong:u} and overdetermined in~\eqref{strong:ub}. In the latter case, homogeneous Dirichlet and Neumann data is simultaneously imposed on $\Gc$, therefore problem~\eqref{strong:ub} has only the trivial solution by virtue of the unique continuation principle~\cite[Corollary]{ang:98}, i.e. $\Kcal=\{\bfze\}$. By contrast, $\Hcal$ now includes forced responses for any excitation applied on $\Gc$ and eigenfunctions when $\oo$ is an eigenvalue for any kind of homogeneous data on $\Gc$ (see problem~\eqref{strong:u}), and is thus infinite-dimensional. We do not attempt to characterize $\Hcal$ more precisely, since in this case we will only use the fact that $\Kcal=\{\bfze\}$.

The following property of the bilinear form $\Bcal$, whose proof is given in Sec.~\ref{proof:infsup}, is crucial for establishing the well-posedness of the stationarity problem~\eqref{stat:pb}:
\begin{lemma}\label{infsup}
There exists $\beta\shg0$ such that the bilinear form $\Bcal:\Ucal\shtimes\Wcal\to\Rbb$ introduced in problem~\eqref{stat:pb} satisfies the inf-sup condition
\begin{equation}
  \inf_{\bfw\in\Kcal{}^{\perp}} \sup_{\bfu\in\Ucal} \frac{\Bcal(\bfu,\bfw)}{\|\bfu\|_{\oo}\|\bfw\|_{\oo}}
 = \inf_{\bfu\in\Hcal^{\perp}} \sup_{\bfw\in\Wcal} \frac{\Bcal(\bfu,\bfw)}{\|\bfu\|_{\oo}\|\bfw\|_{\oo}} = \beta. \label{infsup:b}
\end{equation}
Moreover, let $0\shleq\oo_1\shleq\oo_2\shleq\ldots$ be the eigenvalues associated with the homogeneous problem~\eqref{W:eigen}. 
Then, there exists $0\shl\xi\shleq1$ such that the inf-sup constant $\beta$ satisfies
\[
  \beta \geq \xi\inf_{\oo_n\not=\oo} \frac{|\oo_n^2-\oo^2|}{\oo_n^2+\oo^2}
\]
If either $\Ucal=\Wcal$ or $\Hcal=\{\bfze\}$, the above inequality holds with $\xi=1$.
\end{lemma}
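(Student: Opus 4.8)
The plan is to collapse the dynamic form $\Bcal$ into a single self-adjoint operator on $\Ucal$ and then read the inf-sup constant directly off its spectrum. First I would introduce, via the $\oo$-inner product, the operator $T\colon\Ucal\to\Ucal$ defined by $(T\bfv,\bfuT)_{\oo}=\oo^2(\rho\bfv,\bfuT)$; since $\rho$ is bounded below by a positive constant and the embedding $\Ucal\hookrightarrow L^2(\OO)$ is compact (Rellich), $T$ is self-adjoint, positive and compact. Because $\Acal(\bfv,\bfuT)=(\bfv,\bfuT)_{\oo}-\oo^2(\rho\bfv,\bfuT)$, the form $\Bcal$ factors as $\Bcal(\bfu,\bfwT)=(S\bfu,\bfwT)_{\oo}=(\bfu,S\bfwT)_{\oo}$ with $S\shdeq I-2T$ self-adjoint on $\Ucal$ (valid for $\bfu\shin\Ucal$, $\bfwT\shin\Wcal\shsubs\Ucal$). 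The spectral theorem then furnishes an $\oo$-orthonormal eigenbasis $\{\bfps_n\}$ of $T$ with eigenvalues $\oo^2/(\oo_n^2+\oo^2)$, the $\oo_n$ being exactly the eigenvalues of~\eqref{W:eigen}; hence $S$ is diagonal in the same basis with eigenvalues $(\oo_n^2-\oo^2)/(\oo_n^2+\oo^2)$, so that $N(S)=\Zcal$ is finite-dimensional and the spectrum of $S$ is bounded away from $0$ on $\Zcal^{\perp}$ with gap $\gamma\shdeq\inf_{\oo_n\neq\oo}|\oo_n^2-\oo^2|/(\oo_n^2+\oo^2)>0$ (positive since $\oo^2/(\oo_n^2+\oo^2)\to0$).

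Next I would use Riesz representation to identify the two bilinear-form operators with genuine Hilbert-space maps: $B$ corresponds to $P_{\Wcal}S\colon\Ucal\to\Wcal$ and $B\tsup$ to $S|_{\Wcal}\colon\Wcal\to\Ucal$, where $P_{\Wcal}$ is the $\oo$-orthogonal projection onto $\Wcal$, and these two are mutually adjoint. Consequently the two quotients in~\eqref{infsup:b} are precisely the reduced minimum moduli of an operator and its adjoint, namely $\inf_{\bfw\in\Kcal^{\perp}}\|S\bfw\|_{\oo}/\|\bfw\|_{\oo}$ and $\inf_{\bfu\in\Hcal^{\perp}}\|P_{\Wcal}S\bfu\|_{\oo}/\|\bfu\|_{\oo}$, which coincide by the closed range theorem as soon as one of them is shown positive. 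I would therefore reduce the whole statement to estimating the first quantity $\beta\shdeq\inf_{\bfw\in\Kcal^{\perp}}\|S\bfw\|_{\oo}/\|\bfw\|_{\oo}$, noting that $\Kcal=N(B\tsup)=\Wcal\shcap\Zcal$.

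In Case~(i) ($\Wcal=\Ucal$) one has $\Kcal=\Zcal=N(S)$, so $\bfw$ ranges over $N(S)^{\perp}$ and the spectral gap gives $\beta=\gamma$ exactly, i.e.\ $\xi=1$. In Case~(ii) the decisive fact is $\Kcal=\{\bfze\}$ (unique continuation, already established), so $\bfw$ now ranges over all of $\Wcal$, which may still contain vectors with nonzero component in $N(S)=\Zcal$. Splitting $\bfw=\bfw_0+\bfw_1$ with $\bfw_0\shin\Zcal$ and $\bfw_1\shin\Zcal^{\perp}$, the gap estimate gives $\|S\bfw\|_{\oo}=\|S\bfw_1\|_{\oo}\geq\gamma\|\bfw_1\|_{\oo}$, so bounding $\beta$ below reduces to controlling $\|\bfw_0\|_{\oo}$ by $\|\bfw_1\|_{\oo}$ over $\bfw\shin\Wcal$.

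This last control is exactly where the constant $\xi$ enters and is, I expect, the main obstacle. I would establish the existence of $C\geq0$ with $\|\bfw_0\|_{\oo}\leq C\|\bfw_1\|_{\oo}$ for all $\bfw\shin\Wcal$ by a compactness argument: were no such $C$ to exist, a normalized sequence $\bfw^{(k)}\shin\Wcal$ with $\|\bfw_0^{(k)}\|_{\oo}=1$ and $\|\bfw_1^{(k)}\|_{\oo}\to0$ would, using finite-dimensionality of $\Zcal$ and closedness of $\Wcal$, converge to a unit vector of $\Wcal\shcap\Zcal=\Kcal$, contradicting $\Kcal=\{\bfze\}$. Combining the two estimates yields $\beta\geq\gamma/\sqrt{1+C^2}$, so the claimed bound holds with $\xi=1/\sqrt{1+C^2}\in(0,1]$. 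Finally $C=0$ (hence $\xi=1$) both when $\Wcal=\Ucal$, where $\bfw\shin N(S)^{\perp}$ forces $\bfw_0=\bfze$, and when $\Zcal=\{\bfze\}$, the latter being implied by $\Hcal=\{\bfze\}$; this recovers the two stated sufficient conditions.
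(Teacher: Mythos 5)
Your proof is correct and follows essentially the same route as the paper's: both diagonalize the form $\Bcal$ on the eigenbasis of problem~\eqref{W:eigen} (your operator $S=I-2T$ is exactly the Riesz representative $\bfuH=S\bfw$ the paper computes componentwise), both use unique continuation to get $\Wcal\shcap\Zcal=\{\bfze\}$, and both obtain $\xi$ by the same finite-dimensional compactness/contradiction argument, yielding the identical bound $\|\bfw_1\|_{\oo}\geq\xi\|\bfw\|_{\oo}$. The only (welcome) addition is that you explicitly justify the equality of the two inf-sup expressions in~\eqref{infsup:b} via the closed range theorem, a step the paper leaves implicit.
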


In addition, the following assumptions are made on the bilinear forms $\Dcal$ and $\Acal$:
\begin{Ass}\label{ass:ker}\enlargethispage*{1ex}

The measurement bilinear form $\Dcal$ is coercive on $\Hcal\shtimes\Hcal$: there exists $\delta\shg0$ such that $\Dcal(\bfv,\bfv) \shgeq \delta\|\bfv\|^2_{\oo}$ for all $\bfv\shin\Hcal$.
\end{Ass}
\begin{Ass}\label{ass:W}
The experimental conditions are such that $|\GD\shcup\Gc|\shg0$. The stiffness bilinear form $\Acal$ is therefore coercive on $\Wcal\shtimes\Wcal$.
\end{Ass}
\begin{remark}\label{boundary:data}
Let $\Ncal:=N(D)\subset\Ucal$ be the null space of $D$. Assumption~\ref{ass:ker} implies that $\Hcal\shcap\Ncal=\{\bfze\}$. If it is not verified, there exists $\bfz\shneq\bfze,\ \bfz\shin\Hcal\shcap\Ncal$. Then, $(\bfw,\bfu)=(\bfze,\bfz)$ solves~\eqref{stat:pb} with $\bff\sheq\bfd\sheq\bfze$, i.e. uniqueness fails for the original stationarity problem. Assumption~\ref{ass:ker} is therefore necessary for the well-posedness of problem~\eqref{stat:pb}, and will be seen to be also sufficient. It means that any nontrivial elastodynamic state satisfying the (possibly incomplete) homogeneous boundary conditions on $\GD,\GN$ must register on the measurement apparatus.

For the case of incomplete BCs (for which $\Hcal$ is infinite-dimensional), assumption~\ref{ass:ker} is a stringent requirement, as it cannot be met with a $L^2$ norm on measurement residuals (since $D$ would then be compact for the $\|\cdot\|_{\oo}$ norm). In this case, $\Dcal$ may for example be defined so as to be equivalent to the $H^1(\OO\msub)$ norm. By contrast, for the complete BC case (for which $\Hcal$ is at most finite-dimensional), coercivity on $\Hcal\shtimes\Hcal$ can be achieved with $\Dcal$ defined in terms of a $L^2$ norm.
\end{remark}

Assumption~\ref{ass:W} excludes the case $\GN=\dO$, i.e. pure Neumann boundary conditions.  The resulting coercivity of $\Acal$ over the whole $\Wcal\shtimes\Wcal$ will make the forthcoming analyses simpler and is not detrimental in practice: under usual experimental conditions, the sample will not undergo known surface forces on the whole boundary $\dO$ while being completely ``unsupported."\enlargethispage*{3ex}

We are now in a position to establish the well-posedness of the stationarity problem~\eqref{stat:pb}. To do so, 
still following~\cite{boffi}, we use the decomposition $\Ucal=\Hcal\oplus\Hcal^{\perp}$ and split any $\bfu\shin\Ucal$ and $\bfd\shin\Ucal'$ according to
\begin{equation}
  \bfu = \bfu_0+\bfu_1 \quad (\bfu_0\shin\Hcal,\,\bfu_1\shin\Hcal^{\perp}), \qquad
  \bfd = \bfd_0+\bfd_1 \quad (\bfd_0\shin\Hcal',\bfd_1\shin\lpar\Hcal^{\perp})'), \label{ud:split}
\end{equation}
to cater for $\Hcal$ being potentially non-trivial, noting that $\Bcal(\bfu,\bfwT) = \Bcal(\bfu_1,\bfwT)$. We therefore have
\begin{equation}
  \lbra \bfd,\bfuT \rbra_{\Ucal',\Ucal}
 = \lbra \bfd_0,\bfuT_0 \rbra_{\Ucal',\Ucal} + \lbra \bfd_1,\bfuT_1 \rbra_{\Ucal',\Ucal}. \label{f:kernel}
\end{equation}
Using these definitions and notations, we obtain the following main result, whose proof is given in Section~\ref{wellposed:proof}:
\begin{theorem}
\label{wellposed}
Let Assumptions~\ref{ass:ker} and~\ref{ass:W} hold. Then, for every $\bff\shin\Wcal'$ and $\bfd\shin\Ucal'$ and for any $\kappa\shg0$, problem~\eqref{stat:pb} has a unique solution $(\bfw,\bfu)\in\Wcal\shtimes\Ucal$, that moreover satisfies
\[
  \|\bfu\|+\|\bfw\| \leq C\lpar \|\bff\| + \|\bfd\| \rpar,
\]
where the constant $C$ depends only on $\kappa$, the stability constants $\alpha,\beta,\delta$ and the continuity constants $a,\,d$. More precisely, with reference to the form~(\ref{stat:pb:block},b) of problem~\eqref{stat:pb}, the following estimates hold (with all dependences on $\kappa$ in the continuity constants made explicit):
\begin{align}
  \|\bfw\| &\leq \alpha^{-1}\|\bff\| + Q \|\bfd_0\| + \kappa \beta^{-1}q_a\|\bfd_1\|, \\
  \|\bfu_0\| &\leq \beta^{-1}q_d(1\shp q_a) \|\bff\| + (1\shp q_d r_a Q)\|\bfd_0\| + \kappa \beta^{-1}q_a q_d r_a\|\bfd_1\|, \\
  \|\bfu_1\| &\leq \beta^{-1}(1\shp q_a) \|\bff\| + r_a Q \|\bfd_0\| +\kappa \beta^{-1}q_a r_a\|\bfd_1\|.  
\end{align}
where the non-dimensional constants $q_a,q_d,r_a,r_d$ are defined by $q_a\shdeq a/\alpha$, $q_d\shdeq d/\delta$, $r_a\shdeq a/\beta$ and $r_d\shdeq d/\beta$ and the constant $Q$ is given by $2Q := \kappa\alpha^{-1}\lpar q_d r_a + \sqrt{4\alpha(\kappa\delta)^{-1} + q^2_c r^2_a} \rpar$.
\end{theorem}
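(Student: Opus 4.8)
The plan is to verify the hypothesis of Lemma~\ref{bounding} for the symmetric operator $\sfG$, namely to establish the single bound $\|\sfG\sfU\|_{\Ubb'}\geq\eta\|\sfU\|_{\Ubb}$ for some $\eta\shg0$. By the self-adjointness $\sfG\sheq\sfG\tsup$, this one estimate yields both injectivity and surjectivity, hence invertibility with $\|\sfG^{-1}\|\leq\eta^{-1}$; the explicit stability bounds then follow by tracking constants. Rather than work with $\sfG$ directly, I would exploit the block structure~(\ref{stat:pb}a,b) and the splitting $\Ucal\sheq\Hcal\oplus\Hcal^{\perp}$ from~\eqref{ud:split}, solving for the three components $\bfw,\bfu_0,\bfu_1$ in a cascade that mirrors the order of the claimed estimates.

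**First**, I would recover $\bfu_0\shin\Hcal$. Since $\Bcal(\bfuT,\bfw)\sheq0$ whenever $\bfuT\shin\Hcal$ (by definition of $\Hcal\sheq N(B)$ and symmetry of the pairing), testing~(\ref{stat:pb}b) with $\bfuT\shin\Hcal$ kills the coupling term and leaves $\kappa\Dcal(\bfu,\bfuT)\sheq\kappa\lbra\bfd,\bfuT\rbra$, i.e. $\kappa\Dcal(\bfu_0\shp\bfu_1,\bfuT)\sheq\kappa\lbra\bfd_0,\bfuT\rbra$. Coercivity of $\Dcal$ on $\Hcal$ (Assumption~\ref{ass:ker}) then controls $\bfu_0$ via Lax--Milgram on $\Hcal$, modulo the contribution of $\bfu_1$ carried through the continuity constant $d$. \textbf{Second}, I would recover $\bfw$ from~(\ref{stat:pb}a): because $\Kcal\sheq N(B\tsup)$ is trivial under Assumption~\ref{ass:W} (Case (ii)) or equals $\Zcal$ (Case (i)), the inf-sup condition~\eqref{infsup:b} of Lemma~\ref{infsup} makes $B\tsup$ bounded below on $\Kcal^{\perp}$, so the equation $B\tsup\bfw\sheq\bff\shm A\bfu_1$ (restricted to $\Hcal^{\perp}$-test functions, where $\Acal$ acts) determines $\|\bfw\|$ in terms of $\|\bff\|$ and $\|\bfu_1\|$ with factor $\beta^{-1}$. \textbf{Third}, testing~(\ref{stat:pb}a) against $\bfwT$ and using coercivity of $\Acal$ on $\Wcal$ (Assumption~\ref{ass:W}, constant $\alpha$) closes the loop for $\bfu_1\shin\Hcal^{\perp}$ through the inf-sup bound applied the other way.

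**The main obstacle** is the sign-indefiniteness recorded in Remark~\ref{sign:indefinite}: $G(\sfU,\sfU)$ is neither coercive nor a standard saddle-point form, so neither Lax--Milgram nor the classical Brezzi theory applies directly. The resolution is precisely the perturbed-mixed viewpoint of~\cite[Sec.~4.3]{boffi}: the $\kappa\Dcal$ term is a \emph{positive} perturbation that supplies coercivity exactly on the kernel $\Hcal$ where $\Bcal$ degenerates, while the inf-sup condition handles the complement $\Hcal^{\perp}$. The delicate point is that the three components are genuinely coupled—$\bfu_0$ feeds into $\bfw$ through~(\ref{stat:pb}a) and $\bfu_1$ feeds back into the $\bfu_0$ estimate through $\Dcal$—so the bounds cannot be obtained in closed form by a single inequality but must be assembled by substituting the estimates in sequence and solving the resulting small linear system of inequalities for the three norms. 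This is where the nondimensional ratios $q_a,q_d,r_a,r_d$ and the quantity $Q$ in the statement emerge: $Q$ (defined through a quadratic, hence the square root) arises from resolving the circular dependence between $\|\bfw\|$ and $\|\bfu_0\|$ via a completing-the-square or discriminant argument. I expect the bookkeeping to produce exactly the stated factors, with $\eta^{-1}$ controlling the sum $\|\bfu\|\shp\|\bfw\|$; the qualitative conclusion (unique stable solution for every $\kappa\shg0$ and every frequency, including eigenfrequencies, since $\beta\shg0$ holds whenever $\oo_n\shneq\oo$) follows once all three component bounds are in hand.
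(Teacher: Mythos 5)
Your overall architecture matches the paper's: treat \eqref{stat:pb} as a perturbed mixed problem, split $\Ucal\sheq\Hcal\oplus\Hcal^{\perp}$, bound the three components separately, and conclude via Lemma~\ref{bounding}. Your first step (testing~(\ref{stat:pb}b) with $\bfuT\shin\Hcal$ to kill the coupling term and control $\bfu_0$ by coercivity of $\Dcal$ on $\Hcal$, up to a $d\,\|\bfu_1\|$ contribution) is exactly the paper's inequality~\eqref{ineq:01}. But there is a genuine gap in how you propose to bound $\bfw$. You invoke ``the equation $B\tsup\bfw\sheq\bff\shm A\bfu_1$'' --- no such equation exists in the system: equation~(\ref{stat:pb}a) reads $A\bfw+B\bfu_1=\bff$ in operator form (with $\Acal$ acting on $\bfw$, not on $\bfu_1$, and with $B:\Ucal\to\Wcal'$ rather than $B\tsup$), and the paper uses it together with the inf-sup lower bound $\|B\bfu_1\|\geq\beta\|\bfu_1\|$ to estimate $\|\bfu_1\|$ in terms of $\|\bfw\|$ and $\|\bff\|$, not the other way round. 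More seriously, the purely linear cascade you describe cannot close unconditionally: coercivity of $\Acal$ applied to~(\ref{stat:pb}a) gives $\|\bfw\|\leq\alpha^{-1}\lpar\|\bff\|+b\|\bfu_1\|\rpar$ (where $b$ is the continuity constant of $\Bcal$), while the inf-sup gives $\|\bfu_1\|\leq\beta^{-1}\lpar\|\bff\|+a\|\bfw\|\rpar$; substituting one into the other requires a smallness condition of the type $ab/(\alpha\beta)\shl1$, which is not available. Note also that the stated estimates contain no continuity constant for $\Bcal$ at all, a sign that the correct proof never bounds the coupling term that way.

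The missing idea is the energy identity: test~(\ref{stat:pb}a) with $\bfwT\sheq\bfw$ and~(\ref{stat:pb}b) with $\bfuT\sheq-\bfu$ and add. The sign-indefinite coupling terms $\Bcal(\bfu,\bfw)$ cancel and one obtains $\Acal(\bfw,\bfw)+\kappa\Dcal(\bfu,\bfu)=\lbra\bff,\bfw\rbra_{\Wcal',\Wcal}+\kappa\lbra\bfd,\bfu\rbra_{\Ucal',\Ucal}$, whence, discarding the positive term $\kappa\Dcal(\bfu,\bfu)$, $\alpha\|\bfw\|^2\leq\|\bff\|\,\|\bfw\|+\kappa\lpar\|\bfd_0\|\,\|\bfu_0\|+\|\bfd_1\|\,\|\bfu_1\|\rpar$. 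This is what breaks the circularity: the right-hand side couples the solution norms to the \emph{data}, not to each other. Feeding the bounds on $\|\bfu_0\|$ and $\|\bfu_1\|$ (both ultimately controlled by $\|\bfw\|$) into this inequality produces, in the case $\bfd_0\shneq\bfze$, a genuine quadratic inequality $\|\bfw\|^2\shm A\|\bfw\|\shm B\leq0$ whose positive root is the source of the constant $Q$ and its square root. You correctly anticipated a discriminant argument, but your scheme supplies no quadratic to apply it to, since all of your inequalities are linear in the three norms. With the energy identity in place, the paper treats the three cases $\bff\shneq\bfze$, $\bfd_0\shneq\bfze$, $\bfd_1\shneq\bfze$ separately and superposes by linearity; the remainder of your bookkeeping plan, and the final appeal to Lemma~\ref{bounding}, then goes through as you describe.
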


\begin{remark}
Consider finite-dimensional subspaces $\Wcal_h \shsubs\Wcal$ and $\Ucal_h \shsubs\Wcal$ (e.g. from finite element discretizations), whose dimension depends on a discretization parameter $h$. If the conditions of Theorem~\ref{wellposed} hold uniformly with $h$ (i.e. for all discretization levels), the  discrete  systems arising from~\eqref{stat:pb} are well-posed.  Moreover, if the approximability property holds (i.e. elements of $\Ucal,\Wcal$ can be approximated arbitrarily closely by elements of $\Ucal_h,\Wcal_h$ for some small enough $h$), then the sequence of solutions of the discrete systems converges (as $h \to 0$) to the solution of~\eqref{stat:pb} in the given norm~\cite{boffi}.
\end{remark}

\subsection{Supplementary assumption for identification feasibility} Assumptions~\ref{ass:ker} and~\ref{ass:W} ensure the well-posedness of the stationarity problem~\eqref{stat:pb}. Additional requirements are however needed to ensure that the data $\bfu\msup$ also provides useful information towards the original elastic imaging inverse problem. To see this, consider the case where $N(D)=:\Ncal\sheq\Hcal^{\perp}$, for which the experimental data is just sufficient to satisfy Assumption~\ref{ass:ker}. In that case, the stationarity problem~\eqref{stat:pb} reads
\begin{equation}
\text{Find }(\bfw,\bfu_1,\bfu_0)\in\Wcal\shtimes\Hcal^{\perp}\shtimes\Hcal, \quad \left\{
\begin{aligned}
  \text{(a)} && \Acal(\bfw,\bfwT) + \Bcal(\bfwT,\bfu_1) &= \lbra \bff, \bfwT \rbra_{\Wcal',\Wcal} && \text{for all }\bfwT\shin\Wcal \\[-0.5ex]
  \text{(b)} && \Bcal(\bfuT_1,\bfw) &= 0 && \text{for all }\bfuT_1\shin\Hcal^{\perp} \\
  \text{(c)} && \Dcal(\bfu_0,\bfuT_0) &= \lbra \bfd_0,\bfuT_0 \rbra_{\Ucal',\Ucal} && \text{for all }\bfuT_0\shin\Hcal
\end{aligned} \right.
\end{equation}
having written equation~(\ref{stat:pb}b) separately for $\bfuT\sheq\bfuT_1\shin\Hcal^{\perp}$ and $\bfuT\sheq\bfuT_0\shin\Hcal$. Equation (c) determines $\bfu_0$ solely from the measurements (so $\bfu_0$ depends neither on $\bff$ nor on the assumed elastic properties $\CS$), while equations (a), (b) show that $\bfw,\bfu_1$ do not depend on the measurements. The case $\Ncal\sheq\Hcal^{\perp}$ is therefore a limiting situation where the measurement $\bfu\msup$ carries no information on $\CS$. We therefore introduce the following additional assumption on the measurement configuration: 
\begin{Ass}\label{ass:N}
$\Ncal$ is a proper subspace of $\Hcal^{\perp}$, i.e. has a nontrivial orthogonal complement in $\Hcal^{\perp}$.\enlargethispage*{3ex}
\end{Ass}

\section{Stationarity solution asymptotics}
\label{sec:3}

To gain insight into the effect of the adjustable weight parameter $\kappa$ on the minimization problem~\eqref{eqn:InvProbDefinition}, we now seek the limiting forms of the solution of the stationarity problem~\eqref{stat:pb} in the limiting situations $\kappa\to0$ and $\kappa\to+\infty$. Consequences of the results of this section, in particular regarding sign properties of the Hessian of the MECE reduced functional, are discussed in Sections~\ref{sec:large-k-obj} and~\ref{sec:small-k-obj}.


We start by recasting problem~\eqref{stat:pb} using the splitting~\eqref{ud:split} and operator notation, to obtain
\begin{subequations}
\begin{equation}
  \sfG \sfU = \sfF \label{stat:pb:block}
\end{equation}
where $\sfU\shin\Ubb:=\Wcal\shtimes\Ucal=\Wcal\shtimes\Hcal\shtimes\Hcal^{\perp}$, $\sfF\shin\Ubb'$ and the operator $\sfG:\Ubb\to\Ubb'$ are defined by
\begin{equation}
  \sfG = \begin{Bmatrix} A & 0 & B \\ 0 & -\kappa D_{00} & -\kappa D_{10} \\ B\tsup & -\kappa D_{01} & -\kappa D_{11} \end{Bmatrix}, \qquad
  \sfU = \begin{cmatrix} \bfw \\ \bfu_0 \\ \bfu_1 \end{cmatrix}, \qquad
  \sfF = \begin{cmatrix} \bff \\ -\kappa\bfd_0 \\ -\kappa\bfd_1 \end{cmatrix}\;. \label{block4:def}
\end{equation}
\end{subequations}
The above block form~(\ref{stat:pb:block},b) of the stationarity problem~\eqref{stat:pb} is such that its first row is equation~(\ref{stat:pb}a) whereas the remaining two rows are equation~(\ref{stat:pb}b) with $\bfuT=\bfuT_0\shin\Hcal$ and $\bfuT=\bfuT_1\shin\Hcal^{\perp}$, in that order. The operators $D_{00},D_{10},D_{01},D_{11}$ are the $\Hcal\to\Hcal'$, $\Hcal^{\perp}\to\Hcal'$, $\Hcal\to(\Hcal^{\perp})^{\prime}$ and $\Hcal^{\perp}\to(\Hcal^{\perp})^{\prime}$ restrictions of $D:\Ucal\to\Ucal'$, respectively (these restrictions being defined by $D_{ij}\sheq P_j DE_i$ in terms of the extension operators $E_0:\Hcal\to\Ucal$, $E_1:\Hcal^{\perp}\to\Ucal$ and the orthogonal projectors $P_0:\Ucal\to\Hcal$, $P_1:\Ucal\to\Hcal^{\perp}$). The zero blocks in~\eqref{block4:def} account for $\Hcal$ being the null space of the operator $B$.\enlargethispage*{3ex}

\subsection{Small-$\kappa$ expansion} We begin by deriving the leading expansion of the stationarity solution $(\bfw,\bfu)=(\bfw^{\kappa},\bfu^{\kappa})$ about $\kappa\sheq0$, assuming \emph{a priori} the expansion to have the form
\begin{equation}
  \bfw^{\kappa} = \bfw^{(0)} + \kappa\bfw^{(1)} + \ldots, \qquad
  \bfu^{\kappa}_0 = \bfu_0^{(0)} + \kappa\bfu_0^{(1)} + \ldots, \qquad
  \bfu^{\kappa}_1 = \bfu_1^{(0)} + \kappa\bfu_1^{(1)} + \ldots \label{ansatz:small}
\end{equation}
(i.e. to follow the format $\sfU_{\kappa}=\sfU^{(0)}+\kappa\sfU^{(1)}+\ldots$), inserting the ansatz~\eqref{ansatz:small} into problem~\eqref{stat:pb:block},~\eqref{block4:def} and writing the resulting $O(1),\,O(\kappa)\ldots$ equations. The $O(1)$ equations are readily obtained as:
\begin{subequations}
\begin{equation}
  \begin{Bmatrix} A & B \\ B\tsup & 0 \end{Bmatrix} \begin{cmatrix} \bfw^{(0)} \\ \bfu^{(0)}_1 \end{cmatrix}
 = \begin{cmatrix} \bff \\ 0 \end{cmatrix} \label{smallkappa:a}
\end{equation}
Thanks to Assumption~\ref{ass:W}, they define a well-posed problem for $\lpar\bfw^{(0)},\bfu^{(0)}_1\rpar$ by~\cite[Cor.~4.2.1]{boffi}, since $A$ is coercive and $B$ is a bounding $\Hcal^{\perp}\to\Wcal$ operator. Then, the $O(\kappa)$ equations are
\begin{equation}
  D_{00}\bfu^{(0)}_0 = \bfd_0 - D_{10}\bfu^{(0)}_1, \label{smallkappa:b}
\end{equation}
which (since $D_{00}$ is coercive by Assumption~\ref{ass:ker}) defines a well-posed problem for $\bfu^{(0)}_0$, and
\begin{equation}
  \begin{Bmatrix} A & B \\ B\tsup & 0 \end{Bmatrix} \begin{cmatrix} \bfw^{(1)} \\ \bfu^{(1)}_1 \end{cmatrix}
 = \begin{cmatrix} 0 \\ -\bfd_1 + D_{01}\bfu^{(0)}_0 + D_{11}\bfu^{(0)}_1 \end{cmatrix}, \label{smallkappa:c}
\end{equation}
which define a well-posed problem for $\lpar\bfw^{(1)},\bfu^{(1)}_1\rpar$. We now can state and prove the following result:
\end{subequations}
\begin{prop}\label{prop:smallkappa}
The stationarity solution admits the small-$\kappa$ expansion $\sfU=\sfU^{(0)} + O(\kappa)$, in the sense of the $\|\cdot\|_{\Ubb}$ norm. The components $\bfw,\bfu_0,\bfu_1$ of $\sfU^{(0)}$ solve the well-posed problems~\eqref{smallkappa:a} and~\eqref{smallkappa:b}. Moreover, we have $\bfw^{(0)}\sheq\bfze$ in the ``usual'' case where $\Kcal=\{\bfze\}$.
\end{prop}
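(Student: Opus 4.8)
The plan is to take $\sfU^{(0)}\shdeq(\bfw^{(0)},\bfu_0^{(0)},\bfu_1^{(0)})$ to be the solution of the two $\kappa$-independent problems~\eqref{smallkappa:a} and~\eqref{smallkappa:b}, whose well-posedness has just been recorded (via Assumptions~\ref{ass:W} and~\ref{ass:ker}), and then to bound the residual $\sfU^{\kappa}-\sfU^{(0)}$ \emph{directly} through the explicit stability estimates of Theorem~\ref{wellposed}; here $\sfU^{\kappa}=(\bfw^{\kappa},\bfu_0^{\kappa},\bfu_1^{\kappa})$ denotes the exact solution (the $\sfU$ of the statement, with its $\kappa$-dependence made explicit). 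Since both $\sfG$ and $\sfF$ in~\eqref{block4:def} depend on $\kappa$ and $\sfG$ degenerates as $\kappa\to0$, I would avoid any appeal to continuity of $\kappa\mapsto\sfG^{-1}$ and instead read the $O(\kappa)$ rate off the residual right-hand side.

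First I would insert $\sfU^{(0)}$ into the block operator~\eqref{block4:def} and evaluate $\sfF-\sfG\sfU^{(0)}$ row by row. The first block row of $\sfG\sfU^{(0)}$ equals $A\bfw^{(0)}+B\bfu_1^{(0)}=\bff$, cancelling the first row of $\sfF$ by~\eqref{smallkappa:a}; the second equals $-\kappa(D_{00}\bfu_0^{(0)}+D_{10}\bfu_1^{(0)})=-\kappa\bfd_0$, cancelling the second row of $\sfF$ by~\eqref{smallkappa:b}; and, using $B\tsup\bfw^{(0)}=\bfze$ from the second line of~\eqref{smallkappa:a}, the third row leaves
\begin{equation}
  \sfF - \sfG\sfU^{(0)} = \begin{cmatrix} \bfze \\ \bfze \\ \kappa\bfg \end{cmatrix}, \qquad \bfg\shdeq -\bfd_1 + D_{01}\bfu_0^{(0)} + D_{11}\bfu_1^{(0)} \shin(\Hcal^{\perp})'.
\end{equation}
Thus $\sfG(\sfU^{\kappa}-\sfU^{(0)})$ is $\kappa$ times the fixed, $\kappa$-independent functional $(\bfze,\bfze,\bfg)$ (the very forcing that drives the $O(\kappa)$ correction in~\eqref{smallkappa:c}); in particular $\|\bfg\|$ is bounded independently of $\kappa$.

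Next I would apply Theorem~\ref{wellposed} to the residual problem $\sfG(\sfU^{\kappa}-\sfU^{(0)})=(\bfze,\bfze,\kappa\bfg)$, matching its right-hand side to the structured datum $(\bff,-\kappa\bfd_0,-\kappa\bfd_1)$ by taking $\bff=\bfze$, $\bfd_0=\bfze$ and $\bfd_1=-\bfg$. The three estimates of Theorem~\ref{wellposed} then reduce to $\|\bfw^{\kappa}-\bfw^{(0)}\|\leq\kappa\beta^{-1}q_a\|\bfg\|$, $\|\bfu_0^{\kappa}-\bfu_0^{(0)}\|\leq\kappa\beta^{-1}q_aq_dr_a\|\bfg\|$ and $\|\bfu_1^{\kappa}-\bfu_1^{(0)}\|\leq\kappa\beta^{-1}q_ar_a\|\bfg\|$, whence $\|\sfU^{\kappa}-\sfU^{(0)}\|_{\Ubb}=O(\kappa)$. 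I expect the crux to lie exactly in this scaling bookkeeping: the $\bfd_0$-dependent contributions in these estimates are \emph{not} $O(\kappa)$ --- the coefficient $1\shp q_dr_aQ$ multiplying $\|\bfd_0\|$ in the $\bfu_0$-bound is $O(1)$, and the ones in the $\bfw$- and $\bfu_1$-bounds involve $Q$, which by its definition in Theorem~\ref{wellposed} is $O(\sqrt{\kappa})$ as $\kappa\to0$ (the dominant term under the square root being $4\alpha(\kappa\delta)^{-1}=O(\kappa^{-1})$). The sharp $O(\kappa)$ rate therefore hinges entirely on the residual having $\bfd_0=\bfze$, which is itself guaranteed by constructing $\sfU^{(0)}$ so that the first two block rows of $\sfF-\sfG\sfU^{(0)}$ vanish identically.

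Finally, the assertion $\bfw^{(0)}=\bfze$ when $\Kcal=\{\bfze\}$ is immediate: the second line of~\eqref{smallkappa:a} reads $B\tsup\bfw^{(0)}=\bfze$, i.e. $\bfw^{(0)}\shin N(B\tsup)=\Kcal$, so $\Kcal=\{\bfze\}$ forces $\bfw^{(0)}=\bfze$. This covers the incomplete-BC case (where $\Kcal=\{\bfze\}$ by unique continuation) as well as the complete-BC case away from the eigenfrequencies of~\eqref{W:eigen}.
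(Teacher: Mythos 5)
Your proof is correct and follows essentially the same route as the paper: both insert $\sfU^{(0)}$ into the block system, observe that the residual has only a third-block component of size $O(\kappa)$ (your $\bfg$ is exactly the paper's $B\tsup\bfw^{(1)}$ from~\eqref{smallkappa:c}), and then invoke the stability estimates of Theorem~\ref{wellposed} with $\bff\sheq\bfd_0\sheq\bfze$. Your added remark on why the $\bfd_0$-contributions must vanish (since $Q=O(\sqrt{\kappa})$ as $\kappa\to0$) is a correct and worthwhile clarification, but does not change the argument.
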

\begin{proof}
Define the truncation error $\Delta\sfU=\{\bfw \shm \bfw^{(0)},\,\bfu_0 \shm \bfu_0^{(0)},\,\bfu_1 \shm \bfu_1^{(0)}\}=\sfU^{\kappa}\shm\sfU^{(0)}$. On setting $\sfU^{\kappa}\sheq\sfU^{(0)}\shp\Delta\sfU$ in~\eqref{stat:pb:block} and using equations~\eqref{smallkappa:a}-\eqref{smallkappa:c}, the governing system for $\Delta\sfU$ is
\begin{equation}
  \sfG\,\Delta\sfU = \sfY, \qquad \text{with} \quad
  \sfY = \begin{cmatrix} \bff \\ -\kappa\bfd_0 \\ -\kappa\bfd_1 \end{cmatrix}
 - \begin{Bmatrix} A & 0 & B \\ 0 & -\kappa D_{00} & -\kappa D_{10} \\ B\tsup & -\kappa D_{01} & -\kappa D_{11} \end{Bmatrix}
   \begin{cmatrix} \bfw^{(0)} \\ \bfu^{(0)}_0 \\ \bfu^{(0)}_1 \end{cmatrix}
 = \begin{cmatrix} \bfze \\ \bfze \\ \kappa B\tsup\bfw^{(1)} \end{cmatrix}\;,
\label{smallkappa:pb}
\end{equation}
and therefore has the form~(\ref{stat:pb:block},b) with $\bff\sheq\bfd_0\sheq\bfze$ and $\bfd_1=-B\tsup\bfw^{(1)}$. Theorem~\ref{wellposed} hence applies to problem~\eqref{smallkappa:pb}, yielding the bounds
\[
  \|\bfu \shm \bfu^{(0)}\| \leq \kappa q_a r_a (q_d\shp1)\beta^{-1} \|B\tsup\bfw^{(1)}\|, \qquad
  \|\bfw \shm \bfw^{(0)}\| \leq \kappa\beta^{-1}q_a \|B\tsup\bfw^{(1)}\|.
\]
Consequently there exists $\kappa_0\shg0$ and a constant $C(\kappa_0)$ such that we have $\|\Delta\sfU\|_{\Ubb}\leq C(\kappa_0)\kappa$ for all $\kappa\shl\kappa_0$. Finally, if $\Kcal\sheq\{\bfze\}$, the second equation of~\eqref{smallkappa:a} implies $\bfw^{(0)}\sheq\bfze$. The proof is complete.
\end{proof}

\subsection{Large-$\kappa$ expansion} We now seek an expansion of $\sfU_{\kappa}$ about $\kappa\sheq+\infty$, assumed to have the form $\sfU_{\kappa}=\sfU^{(0)}\shp\kappa^{-1}\sfU^{(1)}\shp\ldots$. Using this ansatz into the block form~(\ref{stat:pb:block},b) of problem~\eqref{stat:pb}, the arising leading-order $O(\kappa)$ equation is
\begin{equation}
  \begin{Bmatrix} -D_{00} & -D_{10} \\ -D_{01} & -D_{11} \end{Bmatrix}
  \begin{cmatrix} \bfu^{(0)}_0 \\ \bfu^{(0)}_1 \end{cmatrix}
 = \begin{cmatrix} -\bfd_0 \\ -\bfd_1 \end{cmatrix}.
\end{equation}
The chosen ansatz requires that this equation be well-posed. Assumption~\ref{ass:ker} on $\Dcal$, which postulates only the coercivity of $D_{00}$, is insufficient in this respect and must be replaced by a stronger requirement:
\begin{Ass}\label{ass:ker:largekappa}
$\Dcal$ is coercive on $\Ncal^{\perp}\shtimes\Ncal^{\perp}$ (with $\Ncal\sheq N(D)$): there exists $\delta\shg0$ such that $\Dcal(\bfv,\bfv) \shgeq \delta\|\bfv\|^2_{\oo}$ for all $\bfv\shin\Ncal^{\perp}$.
\end{Ass}
This also makes the splitting~\eqref{ud:split} unsuitable for studying the large-$\kappa$ limiting case. We instead split $\bfu$ according to
\begin{equation}
  \bfu = \bfu_0+\bfu_1 \qquad (\bfu_0\shin\Ncal^{\perp},\,\bfu_1\shin\Ncal), \label{u:split:largekappa}
\end{equation}
which implies $\bfd_1\sheq\bfze$. The operator form~\eqref{stat:pb:block} of the coupled problem now uses the definitions
\begin{equation}
  \sfG = \begin{Bmatrix} A & B_0 & B_1 \\ B_0\tsup & -\kappa D & 0 \\ B_1\tsup & 0 & 0 \end{Bmatrix}, \qquad
  \sfU = \begin{cmatrix} \bfw \\ \bfu_0 \\ \bfu_1 \end{cmatrix}, \qquad
  \sfF = \begin{cmatrix} \bff \\ -\kappa\bfd_0 \\ \bfze \end{cmatrix}\;, \label{block5:def}
\end{equation}
based on the splitting~\eqref{u:split:largekappa}, instead of~\eqref{stat:pb:block}. Its solution satisfies (as shown in Sec.~\ref{est:largekappa:proof}) the estimates
\begin{align}
  \|\bfw\|
 &\leq \alpha^{-1} \lpar \|\bff\| + s_d \|\bfd_0\| \rpar, \nnum
  \|\bfu_0\|
 &\leq \kappa^{-1} s_a \delta^{-1} \|\bff\| + \lpar 1 + \kappa^{-1} s_a s_d \rpar \delta^{-1}\|\bfd_0\|, \label{U:largekappa} \\
  \|\bfu_1\|
 &\leq \lpar 1 + q_a + s_a s_d \rpar \beta^{-1}\lpar \|\bff\| + s_d \|\bfd_0\| \rpar \notag
\end{align}
with $s_a \shdeq b/\alpha$ and $s_d \shdeq b/\delta$. Problem~\eqref{stat:pb:block},~\eqref{block5:def} therefore has a unique solution $\sfU$ that depends continuously on the data $\bff,\bfd_0$ (as already known from Theorem~\ref{wellposed}). Moreover, and importantly, all continuity constants in estimates~\eqref{U:largekappa} are bounded in the limit $\kappa\to\infty$. Choosing $\kappa_0\shg0$, we obtain the existence of a constant $C(\kappa_0) \shg0$, independent on $\kappa$, such that $\|\sfU\|_{\Ubb} \leq C(\kappa_0)$ for all $\kappa\shgeq\kappa_0$.

We now proceed by assuming the expansion of the stationarity solution $(\bfw,\bfu)=(\bfw^{\kappa},\bfu^{\kappa})$ about $\kappa\sheq+\infty$ to have, in terms of the new splitting~\eqref{u:split:largekappa}, the form
\begin{equation}
  \bfw^{\kappa} = \bfw^{(0)} + \kappa^{-1}\bfw^{(1)} + \ldots, \qquad
  \bfu^{\kappa}_0 = \bfu_0^{(0)} + \kappa^{-1}\bfu_0^{(1)} + \ldots, \qquad
  \bfu^{\kappa}_1 = \bfu_1^{(0)} + \kappa^{-1}\bfu_1^{(1)} + \ldots. \label{ansatz:large}
\end{equation}
Inserting this ansatz into~\eqref{block5:def} results in $O(\kappa),\, O(1)\ldots$ equations. The $O(\kappa)$ equation is
\begin{equation}
  -D\bfu^{(0)}_0 = -\bfd_0; \label{largekappa:a}
\end{equation}
it is satisfied by $\bfu^{(0)}_0=\bfu\msup$ since the definition of $\bfd$ in~\eqref{stat:pb} and decomposition~\eqref{u:split:largekappa} imply that $\bfd_0=D\bfu\msup$. Then, the $O(1)$ equations yield the two uncoupled well-posed systems
\begin{equation}
  \text{(a) \ }\begin{Bmatrix} A & B_1 \\ B_1\tsup & 0 \end{Bmatrix} \begin{cmatrix} \bfw^{(0)} \\ \bfu^{(0)}_1 \end{cmatrix}
 = \begin{cmatrix} \bff-B_0\bfu\msup \\ \bfze \end{cmatrix}, \qquad
  \text{(b) \ }D\bfu^{(1)}_0 = B\tsup_0\bfw^{(0)}. \label{largekappa:bc}
\end{equation}

\begin{prop}\label{prop:largekappa}
The stationarity solution admits the large-$\kappa$ expansion $\sfU=\sfU^{(0)} + O(\kappa^{-1})$, in the sense of the $\|\cdot\|_{\Ubb}$ norm, with the components $\bfw^{(0)},\bfu^{(0)}_0,\bfu^{(0)}_1$ of $\sfU^{(0)}$ solving the well-posed problems~(\ref{largekappa:bc}a,b).
\end{prop}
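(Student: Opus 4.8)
The plan is to follow the template of the proof of Proposition~\ref{prop:smallkappa}, now working with the large-$\kappa$ splitting~\eqref{u:split:largekappa} and the operator form~\eqref{stat:pb:block},~\eqref{block5:def}. I would first fix the leading-order field $\sfU^{(0)}\sheq(\bfw^{(0)},\bfu^{(0)}_0,\bfu^{(0)}_1)$ whose components solve the $\kappa$-independent problems~\eqref{largekappa:a} and~(\ref{largekappa:bc}a), together with the first-correction component $\bfu^{(1)}_0$ solving~(\ref{largekappa:bc}b); these are well-posed under Assumptions~\ref{ass:ker:largekappa} and~\ref{ass:W}, as recalled above. I would then define the truncation error $\Delta\sfU\shdeq\sfU^{\kappa}\shm\sfU^{(0)}$ and substitute $\sfU^{\kappa}\sheq\sfU^{(0)}\shp\Delta\sfU$ into~\eqref{stat:pb:block}, so that $\Delta\sfU$ solves $\sfG\,\Delta\sfU=\sfY$ with $\sfY\shdeq\sfF\shm\sfG\sfU^{(0)}$.

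The central computation is to evaluate the residual $\sfY$ row by row using~\eqref{block5:def}. The first row vanishes because $A\bfw^{(0)}\shp B_0\bfu^{(0)}_0\shp B_1\bfu^{(0)}_1\sheq\bff$ by~(\ref{largekappa:bc}a) and~\eqref{largekappa:a}, and the third row vanishes because $B_1\tsup\bfw^{(0)}\sheq\bfze$, again by~(\ref{largekappa:bc}a). Only the second row survives: since $D\bfu^{(0)}_0\sheq\bfd_0$ by~\eqref{largekappa:a}, the two $\kappa D$ contributions cancel and one is left with $\sfY\sheq(\bfze,\,-B_0\tsup\bfw^{(0)},\,\bfze)$, which by~(\ref{largekappa:bc}b) equals $(\bfze,\,-D\bfu^{(1)}_0,\,\bfze)$. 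The decisive observation is that $\bfw^{(0)}$ solves the $\kappa$-independent problem~(\ref{largekappa:bc}a), so $\|B_0\tsup\bfw^{(0)}\|$ is a fixed quantity, independent of $\kappa$.

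I would then note that $\sfG\,\Delta\sfU=\sfY$ is again of the form~\eqref{stat:pb:block},~\eqref{block5:def}, now with right-hand side data $\bff\sheq\bfze$ and $-\kappa\bfd_0\sheq-B_0\tsup\bfw^{(0)}$, i.e. effective measurement datum $\bfd_0\sheq\kappa^{-1}B_0\tsup\bfw^{(0)}$ whose norm supplies a factor $\kappa^{-1}$. Feeding this into the estimates~\eqref{U:largekappa} gives $\|\Delta\bfw\|\leq\alpha^{-1}s_d\kappa^{-1}\|B_0\tsup\bfw^{(0)}\|$, $\|\Delta\bfu_0\|\leq(1\shp\kappa^{-1}s_a s_d)\delta^{-1}\kappa^{-1}\|B_0\tsup\bfw^{(0)}\|$ and $\|\Delta\bfu_1\|\leq(1\shp q_a\shp s_a s_d)\beta^{-1}s_d\kappa^{-1}\|B_0\tsup\bfw^{(0)}\|$, each of order $\kappa^{-1}$. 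Since all continuity constants in~\eqref{U:largekappa} remain bounded as $\kappa\to\infty$, there exist $\kappa_0\shg0$ and $C(\kappa_0)\shg0$ with $\|\Delta\sfU\|_{\Ubb}\leq C(\kappa_0)\kappa^{-1}$ for all $\kappa\shgeq\kappa_0$, which is the claimed expansion $\sfU\sheq\sfU^{(0)}\shp O(\kappa^{-1})$.

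The step I expect to be the main obstacle is the residual computation, specifically checking that the $O(\kappa)$ terms cancel \emph{exactly} --- this hinges on the identity $\bfd_0\sheq D\bfu^{(0)}_0$ built into~\eqref{largekappa:a} --- so that $\sfY$ is genuinely $O(1)$ and not $O(\kappa)$, and then correctly matching $\sfY$ to the normalized right-hand side format of~\eqref{U:largekappa} so that the extracted effective datum carries the decisive $\kappa^{-1}$ factor. Once this matching is in place, the bound is immediate and the remainder is routine.
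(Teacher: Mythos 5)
Your proposal is correct and follows essentially the same route as the paper: define the truncation error $\Delta\sfU\sheq\sfU^{\kappa}\shm\sfU^{(0)}$, compute the residual $\sfY\sheq\sfF\shm\sfG\sfU^{(0)}\sheq(\bfze,-B_0\tsup\bfw^{(0)},\bfze)$ exactly as in~\eqref{largekappa:Y}, identify it as a right-hand side of the form~\eqref{block5:def} with $\bff\sheq\bfze$ and $\bfd_0\sheq\kappa^{-1}B_0\tsup\bfw^{(0)}$, and invoke the $\kappa$-uniform estimates~\eqref{U:largekappa} to conclude $\|\Delta\sfU\|_{\Ubb}\leq C(\kappa_0)\kappa^{-1}$. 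The row-by-row cancellation you flag as the main obstacle is precisely the computation the paper carries out, and your resulting bounds agree with the paper's after using $\|B_0\tsup\bfw^{(0)}\|\leq b\|\bfw^{(0)}\|$.
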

\begin{proof}
We examine the truncation error $\Delta\sfU$, defined as for Prop.~\ref{prop:smallkappa}. The governing system for $\Delta\sfU$ still has the format~\eqref{smallkappa:pb}, its right-hand side $\sfY$ being now given, from using~\eqref{largekappa:a} and~(\ref{largekappa:bc}a,b), by
\begin{equation}
  \sfY = \begin{cmatrix} \bff \\ -\kappa\bfd_0 \\ 0 \end{cmatrix}
 - \begin{Bmatrix} A & B_0 & B_1 \\ B_0\tsup & -\kappa D & 0 \\ B_1\tsup & 0 & 0 \end{Bmatrix}
   \begin{cmatrix} \bfw^{(0)} \\ \bfu\msup \\ \bfu^{(0)}_1 \end{cmatrix}
 = \begin{cmatrix} \bfze \\ -B\tsup_0\bfw^{(0)} \\ \bfze \end{cmatrix} \label{largekappa:Y}
\end{equation}
The problem~\eqref{stat:pb:block},~\eqref{largekappa:Y} for $\Delta\sfU$ has the form~\eqref{stat:pb:block},~\eqref{block5:def}, with $\bff\sheq\bfze$ and $\bfd_0=\kappa^{-1}B\tsup_0\bfw^{(0)}$. Its solution therefore satisfies estimates~\eqref{U:largekappa}, which for this particular right-hand side give
\begin{equation}
  \|\Delta\bfw\|
 \leq \frac{s_a s_d}{\kappa} \|\bfw^{(0)}\|, \quad
  \|\Delta\bfu_0\|
 \leq \frac{s_d}{\kappa}\lpar 1 \shp \frac{s_a s_d}{\kappa} \rpar \|\bfw^{(0)}\|, \quad
  \|\Delta\bfu_1\|
 \leq \inv{\kappa}\lpar 1 \shp q_a \shp \frac{s_a s_d}{\kappa} \rpar \frac{s_d}{\beta} \|\bfw^{(0)}\|
\end{equation}
Consequently, there exists $C(\kappa_0) \shg0$ such that $\|\Delta\sfU\|_{\Ubb} \leq \kappa^{-1} C(\kappa_0)$ for all $\kappa\shgeq\kappa_0$.
\end{proof}

\section{Derivatives of the reduced MECE functional}
\label{sec:hessianAnalysis}

In this section, we derive general formulas for the gradient and Hessian of $\LambdaRk(\CS)$ at any $\CS\shin\Qcal$. Then, taking advantage of the results of Sec.~\ref{sec:3}, we show that the Hessian of $\LambdaRk(\CS)$ is asymptotically positive in the large-$\kappa$ case but sign-indefinite in the small-$\kappa$ case connected to least-squares mininimization.

\subsection{First-order derivative of $\LambdaRk$} Recall, from remark (d) of Section~\ref{remarks:coupled}, that the reduced objective $\LambdaRk$ is continuously differentiable with respect to $\CS$.  The first-order derivative is \emph{a priori} given by
\begin{equation}
  \LambdaRk'(\CS) = D_{\Ccal}\Lambdak(\bfu,\bfsig,\CS)[\CSh] = D_{\Ccal}\Lcal(\bfu,\bfw,\bfsig,\CS)[\CSh]
 = \partial_{\Ccal}\Lambdak(\bfu,\bfw,\bfsig,\CS)[\CSh],
\end{equation}
where $D_{\Ccal}$ denotes a total derivative w.r.t. $\CS$ and the prime $(\cdot)^\prime$ symbol is used as a shorthand notation for a derivative w.r.t. $\CS$ in a given direction $\CSh$ (e.g. $\LambdaRk'(\CS)=\LambdaRk'(\CS)[\CSh]$). The last two equalities follow from definition~\eqref{min:C} of $\LambdaRk$ implying verification of the stationarity equations~(\ref{eq:DsigL}-c), and in particular of the balance constraint, for any $\CS$. An explicit expression of $\LambdaRk'(\CS)$ is then obtained as
\begin{equation}
  \LambdaRk'(\CS)
 = \pdemi \lpar \bfeps[\bfu],\CSh\dip\bfeps[\bfu] \rpar - \pdemi\lpar \CS^{-1}\dip\bfsig,\CSh\dip\CS^{-1}\dip\bfsig \rpar
 = \pdemi \Acal(\bfu,\bfu,\CSh) - \pdemi \Acal(\bfu\shp\bfw,\bfu\shp\bfw,\CSh). \label{LambdaR'}
\end{equation}
In particular, any $\CS^{\star}$ solving the reduced minimization problem~\eqref{min:C} must satisfy the first-order optimality condition
\[
  \Acal(\bfu^{\star},\bfu^{\star},\CSh) - \Acal(\bfu^{\star}\shp\bfw^{\star},\bfu^{\star}\shp\bfw^{\star},\CSh) = 0 \qquad
  \text{for all }\CSh \in \Qcal
\]

\subsection{First-order derivative of the stationarity solution} The second-order derivative of $\LambdaRk$ will involve (from applying the total derivative operator $D_{\Ccal}$ to~\eqref{LambdaR'}) the first-order derivative $(\bfu',\bfw')$ of the stationarity solution $(\bfu,\bfw)$ (see remark (d) of Section~\ref{remarks:coupled}). Upon differentiating w.r.t. $\CS$ the stationarity problem~\eqref{stat:pb}, the stationarity solution derivative $(\bfu',\bfw')\in\Ucal\shtimes\Wcal$ solves the variational problem
\begin{equation}
\begin{aligned}
  \text{(a)} && \Acal(\bfw',\bfwT,\CS) + \Bcal(\bfu',\bfwT,\CS) &= -\Acal(\bfw,\bfwT,\CSh) -\Acal(\bfu,\bfwT,\CSh) && \text{for all }\bfwT\shin\Wcal, \\
  \text{(b)} && \Bcal(\bfuT,\bfw',\CS) - \kappa \Dcal(\bfu',\bfuT) &= -\Acal(\bfuT,\bfw,\CSh) && \text{for all }\bfuT\shin\Ucal,
\end{aligned} \label{dd:stat:pb}
\end{equation}
which is well posed since its governing operator is identical to that of the stationarity problem~\eqref{stat:pb}.

\subsection{Second-order derivative of $\LambdaRk$} For convenience, we focus in the sequel on the $\Qcal\to\Rbb$ quadratic form defined by $\CSh\mapsto\LambdaRk''(\CS)[\CSh,\CSh]$, denoted $\LambdaRk''(\CS)$ for short, the corresponding $\Qcal\shtimes\Qcal\to\Rbb$ bilinear mapping being then given by the usual polarization identity $4\LambdaRk''(\CS)[\CSh_1,\CSh_2]=\LambdaRk''(\CS)[\CSh_1\shp\CSh_2,\CSh_1\shp\CSh_2]\shm\LambdaRk''(\CS)[\CSh_1\shm\CSh_2,\CSh_1\shm\CSh_2]$. The second-order derivative $\LambdaRk''(\CS)$ can be derived by differentiating~\eqref{LambdaR'} (e.g. \cite[Thm. 7.8-2]{ciarlet:book}) and rearranging terms, to obtain
\begin{multline}
  \LambdaRk''(\CS)
 = D_{\Ccal}\lpar \LambdaRk'(\CS)[\CSh] \rpar[\CSh]
 = \pdemi D_{\Ccal}\Acal(\bfu,\bfu,\CSh)[\CSh] - \pdemi D_{\Ccal}\Acal(\bfu\shp\bfw,\bfu\shp\bfw,\CSh)[\CSh] \\
 = - \Acal\lpar \bfu, \bfw',\CSh \rpar
    - \Acal\lpar \bfw, \bfu',\CSh \rpar
    - \Acal\lpar \bfw, \bfw',\CSh \rpar. \label{D2L:ED} 
\end{multline}
This expression of $\LambdaRk''(\CS)$ can be recast in a convenient alternative form as follows. Setting $\bfwT\sheq\bfw'$ in~(\ref{dd:stat:pb}a) and $\bfuT\sheq\bfu'$ in~(\ref{dd:stat:pb}b), we find the identity
\begin{equation}
  - \Acal\lpar \bfu, \bfw',\CSh \rpar - \Acal\lpar \bfw, \bfu',\CSh \rpar
  - \Acal\lpar \bfw, \bfw',\CSh \rpar
 = \Acal(\bfw',\bfw',\CS) + 2\Bcal(\bfu',\bfw',\CS)
    - \kappa \Dcal(\bfu',\bfu')
\end{equation}
so that~\eqref{D2L:ED} takes the symmetric form
\begin{equation}
  \LambdaRk''(\CS)
 = \Acal(\bfw',\bfw',\CS) + 2\Bcal(\bfu',\bfw',\CS) - \kappa \Dcal(\bfu',\bfu')
 = G(\sf{U}',\sf{U}',\CS) \label{Lambda'':exp}
\end{equation}

Moreover, $\LambdaRk''(\CS)$ can alternatively be given a sign-revealing form. To this aim, invoking again the splitting~\eqref{u:split:largekappa}, the derivative problem~\eqref{dd:stat:pb} can be written, in operator form, as
\begin{equation}
  \begin{Bmatrix} A & B_0 & B_1 \\ B_0\tsup & -\kappa D & 0 \\ B_1\tsup & 0 & 0 \end{Bmatrix}
  \begin{cmatrix} \bfw' \\ \bfu'_0 \\ \bfu'_1 \end{cmatrix}
 = - \begin{Bmatrix} A' & B'_0 & B'_1 \\ B_0\tsup{}' & 0 & 0 \\ B_1\tsup{}' & 0 & 0 \end{Bmatrix}
  \begin{cmatrix} \bfw \\ \bfu_0 \\ \bfu_1 \end{cmatrix}, \label{dd:stat:pb2}
\end{equation}
where e.g. $A'$ is the operator associated with $\Acal(\cdot,\cdot,\CSh)$.
Using this in~\eqref{Lambda'':exp} produces the following result (whose proof is given in Sec.~\ref{red:hess:proof:sign}), which gives $\LambdaRk''(\CS)$ as an algebraic sum of positive quadratic forms:

\begin{prop}\label{red:hess:sign} Let $(\bfw,\bfu_0,\bfu_1)$ solve the stationarity problem~\eqref{stat:pb:block},~\eqref{block5:def} and $(\bfw',\bfu'_0,\bfu'_1)$ solve the derivative problem~\eqref{dd:stat:pb2}. The second-order derivative of the reduced MECE functional is given by
\begin{equation}
  \LambdaRk''(\CS)
 = \lbra Z\bfw' \shp B_1\bfu'_1,\,\bfw' \shp Z^{-1}B_1\bfu'_1 \rbra_{\Wcal',\Wcal} - \lbra B_1\bfu'_1, Z^{-1}B_1\bfu'_1 \rbra_{\Wcal',\Wcal}
  - \inv{\kappa}\lbra B_0' D^{-1}B_0'{}\tsup \bfw,\bfw \rbra_{\Wcal',\Wcal}
\end{equation}
with the (symmetric, positive, coercive) operator $Z$ defined by $Z\shdeq A\shp\kappa^{-1}B_0D^{-1}B\tsup_0$ in terms of the operators appearing in problem~\eqref{dd:stat:pb2}.
\end{prop}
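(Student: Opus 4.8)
The plan is to start from the symmetric expression~\eqref{Lambda'':exp}, namely $\LambdaRk''(\CS)\sheq\Acal(\bfw',\bfw',\CS)\shp2\Bcal(\bfu',\bfw',\CS)\shm\kappa\Dcal(\bfu',\bfu')$, then eliminate the component $\bfu'_0$ by means of the second block row of the derivative system~\eqref{dd:stat:pb2}, and finally complete the square in $\bfw'$. First I would exploit the splitting $\bfu'\sheq\bfu'_0\shp\bfu'_1$ with $\bfu'_0\shin\Ncal^{\perp}$ and $\bfu'_1\shin\Ncal\sheq N(D)$ used in~\eqref{block5:def}. Since $D\bfu'_1\sheq\bfze$, the data term collapses to $\Dcal(\bfu',\bfu')\sheq\lbra D\bfu'_0,\bfu'_0\rbra_{\Ucal',\Ucal}$ and the coupling term to $\Bcal(\bfu',\bfw')\sheq\lbra B_0\bfu'_0\shp B_1\bfu'_1,\bfw'\rbra_{\Wcal',\Wcal}$, where $D$ denotes its restriction to $\Ncal^{\perp}$, which is invertible by Assumption~\ref{ass:ker:largekappa}.

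Next I would use the second block row of~\eqref{dd:stat:pb2}, which reads $B_0\tsup\bfw'\shm\kappa D\bfu'_0\sheq-B_0\tsup{}'\bfw$, to solve explicitly
\begin{equation}
  \bfu'_0 \sheq \kappa^{-1}D^{-1}\lpar B_0\tsup\bfw'\shp B_0\tsup{}'\bfw\rpar .
\end{equation}
Substituting this into the two $\bfu'_0$-dependent contributions $2\lbra B_0\bfu'_0,\bfw'\rbra_{\Wcal',\Wcal}\shm\kappa\lbra D\bfu'_0,\bfu'_0\rbra_{\Ucal',\Ucal}$ and simplifying, using only the symmetry of $D^{-1}$, the cross terms in $B_0\tsup\bfw'$ and $B_0\tsup{}'\bfw$ cancel and one is left with $\kappa^{-1}\lbra B_0D^{-1}B_0\tsup\bfw',\bfw'\rbra_{\Wcal',\Wcal}\shm\kappa^{-1}\lbra B_0'D^{-1}B_0'{}\tsup\bfw,\bfw\rbra_{\Wcal',\Wcal}$. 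Introducing $Z\shdeq A\shp\kappa^{-1}B_0D^{-1}B_0\tsup$, the Hessian thus reduces to the intermediate form $\LambdaRk''(\CS)\sheq\lbra Z\bfw',\bfw'\rbra_{\Wcal',\Wcal}\shp2\lbra B_1\bfu'_1,\bfw'\rbra_{\Wcal',\Wcal}\shm\kappa^{-1}\lbra B_0'D^{-1}B_0'{}\tsup\bfw,\bfw\rbra_{\Wcal',\Wcal}$, the last term already matching the statement.

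Finally I would complete the square in $\bfw'$. The operator $Z$ is symmetric (both $A$ and $B_0D^{-1}B_0\tsup$ are) and coercive on $\Wcal$, since $A$ is coercive there by Assumption~\ref{ass:W} and $B_0D^{-1}B_0\tsup$ is positive; hence $Z$ is boundedly invertible and $Z^{-1}$ is symmetric and positive. The symmetry identity $\lbra Z\bfw',Z^{-1}B_1\bfu'_1\rbra_{\Wcal',\Wcal}\sheq\lbra B_1\bfu'_1,\bfw'\rbra_{\Wcal',\Wcal}$ then yields
\begin{equation}
  \lbra Z\bfw',\bfw'\rbra_{\Wcal',\Wcal}\shp2\lbra B_1\bfu'_1,\bfw'\rbra_{\Wcal',\Wcal}
 = \lbra Z\bfw'\shp B_1\bfu'_1,\,\bfw'\shp Z^{-1}B_1\bfu'_1\rbra_{\Wcal',\Wcal}\shm\lbra B_1\bfu'_1,Z^{-1}B_1\bfu'_1\rbra_{\Wcal',\Wcal},
\end{equation}
which combined with the eliminated $\bfw$-term gives exactly the claimed identity.

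The algebra itself is routine: the only genuine content is the cancellation of cross terms in the substitution step and the completion of the square. Accordingly, the step requiring the most care is the bookkeeping of duality pairings and transposes across the spaces $\Wcal$, $\Ncal^{\perp}$ and their duals, so that each term $D^{-1}$, $B_0\tsup$, $B_0\tsup{}'$ acts between the correct spaces and every pairing is well-defined. The two invertibility facts — of $D$ on $\Ncal^{\perp}$ (from Assumption~\ref{ass:ker:largekappa}) and of $Z$ on $\Wcal$ (from Assumption~\ref{ass:W}) — are where the hypotheses enter and must be invoked explicitly to make $D^{-1}$ and $Z^{-1}$ meaningful.
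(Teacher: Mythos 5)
Your proof is correct, and it reaches the stated identity by a more economical route than the paper's. Both arguments start from the symmetric form~\eqref{Lambda'':exp} and feed in the derivative problem~\eqref{dd:stat:pb2}, but the paper first completes the square with respect to $A$ (writing $\LambdaRk''$ in terms of $\bfw'\shp A^{-1}B\bfu'$), then invokes the full block solution~\eqref{U':exp} for all three unknowns $\bfu'_0,\bfw',\bfu'_1$ together with the auxiliary quantities $\bfH$, $\bfF$, $\bfG$, and only recovers the $Z$-based expression at the end via the identification $\bfF\sheq Z\bfw'\shp B_1\bfu'_1$. You instead use only the second block row to eliminate $\bfu'_0\sheq\kappa^{-1}D^{-1}(B_0\tsup\bfw'\shp B_0\tsup{}'\bfw)$; the cancellation of the cross terms in $B_0\tsup\bfw'$ and $B_0\tsup{}'\bfw$ (which I verified: it follows from the symmetry of $D^{-1}$ exactly as you say) produces both the operator $Z\sheq A\shp\kappa^{-1}B_0D^{-1}B_0\tsup$ and the term $-\kappa^{-1}\lbra B_0'D^{-1}B_0'{}\tsup\bfw,\bfw\rbra$ in one step, after which a single completion of the square with respect to $Z$ finishes the argument. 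This avoids the intermediate operator identities and the double change of variables in the paper's version; what it gives up is the explicit solution formulas~\eqref{U':exp}, which the paper reuses in the proof of Theorem~\ref{red:hess}, so your shortcut is local to this proposition. Your invocations of Assumption~\ref{ass:ker:largekappa} (invertibility of $D$ on $\Ncal^{\perp}$), of Assumption~\ref{ass:W} (coercivity of $A$, hence of $Z$, on $\Wcal$), and of $D\bfu'_1\sheq\bfze$ together with the symmetry of $D$ to kill the mixed data terms, are all exactly where they need to be.
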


\subsection{Reduced MECE functional: large-$\kappa$ limiting case}
\label{sec:large-k-obj}

Regarding the leading-order term $(\bfu^{(0)},\bfw^{(0)})$ of the stationarity solution expansion~\eqref{ansatz:large}, we observe that equation~(\ref{largekappa:bc}a) for $(\bfu_1^{(0)},\bfw^{(0)})$ coincides with the stationarity problem for the minimization of the pure ECE functional~\eqref{eqn:ECEdefinition} with the kinematic measurement enforced strictly, the latter being achieved by the solution  $\bfu^{(0)}_0$ of the remaining equation~\eqref{largekappa:a}. Moreover, using expansion~\eqref{ansatz:large} in $\EcalR(\CS),\DcalR(\CS)$ defined by~\eqref{min:C}, we find
\begin{equation}
  \EcalR(\CS) = \pdemi \Acal(\bfw^{(0)},\bfw^{(0)},\CS) + O(\kappa^{-1}), \qquad
  \DcalR(\CS) = \pdemi \kappa^{-2} \Dcal(\bfu^{(1)},\bfu^{(1)}) + O(\kappa^{-2}).
\end{equation}
Consistently with the fact that the data is enforced exactly in the $\kappa\to\infty$ limit, we therefore observe that the value of the reduced objective $\LambdaRk(\CS)$ is for large $\kappa$ dominated by its ECE component.

Moreover, exploiting the large-$\kappa$ expansion of the stationarity solution in the reduced objective $\LambdaRk''(\CS)$ as given by Proposition~\ref{red:hess:sign} for situations where elastic moduli are kept fixed outside the measurement region, $\LambdaRk''(\CS)$ is found to be asymptotically convex in the large-$\kappa$ limit:

\begin{theorem}\label{red:hess} Assume that $\text{supp}(\CSh)\shsubs\OO\msup$ (i.e. that the perturbation $\CSh$ vanishes outside the measurement region), so that $B_1'=0$. Then, the second-order derivative $\LambdaRk''(\CS)$ admits the large-$\kappa$ expansion
\begin{equation}
\LambdaRk''(\CS)
 = \LambdaRk''{}^{(0)}(\CS) + \kappa^{-1}\LambdaRk''{}^{(1)}(\CS) + o(\kappa^{-1}),
\end{equation}
with its leading term $\LambdaRk''{}^{(0)}(\CS)$ given by
\begin{equation}
\LambdaRk''{}^{(0)}(\CS)
 = \lbra (I\shm P_0)\lpar A'\bfw^{(0)}\shp B'_0\bfu\msup \rpar,A^{-1}(I\shm P_0)\lpar A'\bfw^{(0)}\shp B'_0\bfu\msup \rpar \rbra_{\Wcal',\Wcal}.
\end{equation}
In the above expression, the operator $P_0:\Wcal'\to\Wcal'$ is defined by $P_0\shdeq B_1(B_1\tsup A^{-1}B_1^{-1})^{-1}B_1\tsup A^{-1}$ (so $P_0P_0\sheq P_0$, i.e. $P_0$ is a projector, and $A^{-1}P_0\sheq P_0\tsup A^{-1}$), while $\bfw^{(0)}$ is the (leading) zeroth-order term of the large-$\kappa$ expansion of $\bfw$, see Prop.~\ref{prop:largekappa}. The above leading-order coefficient $\LambdaRk''{}^{(0)}(\CS)$ is therefore positive. Moreover, the subsequent coefficient $\LambdaRk''{}^{(1)}(\CS)$ is sign-indefinite.
\end{theorem}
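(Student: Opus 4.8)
The plan is to feed the large-$\kappa$ expansions of the stationarity solution (Prop.~\ref{prop:largekappa}) and of its $\CS$-derivative into the symmetric expression~\eqref{Lambda'':exp} for $\LambdaRk''(\CS)$, and to collect powers of $\kappa$; the engine is an asymptotic analysis of the derivative problem~\eqref{dd:stat:pb2} modelled on the proof of Prop.~\ref{prop:largekappa}. Since the hypothesis gives $B_1'\sheq0$, the right-hand side of~\eqref{dd:stat:pb2} (written with the block operator of~\eqref{block5:def} associated to the splitting~\eqref{u:split:largekappa}) loses its third component, so that $(\bfw',\bfu_0',\bfu_1')$ solves problem~\eqref{block5:def} with data $\lpar-(A'\bfw\shp B_0'\bfu_0),\,-B_0\tsup{}'\bfw,\,\bfze\rpar$.

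First I would derive the leading terms of the expansion $\sfU'\sheq\sfU'{}^{(0)}\shp\kappa^{-1}\sfU'{}^{(1)}\shp\ldots$. Inserting this ansatz together with $\bfw\sheq\bfw^{(0)}\shp O(\kappa^{-1})$ and $\bfu_0\sheq\bfu\msup\shp O(\kappa^{-1})$ (Prop.~\ref{prop:largekappa}), the $O(\kappa)$ part of the second block reads $D\bfu_0'{}^{(0)}\sheq\bfze$, whence $\bfu_0'{}^{(0)}\sheq\bfze$ by coercivity of $D$ on $\Ncal^{\perp}$ (Assumption~\ref{ass:ker:largekappa}); thus $\bfu_0'\sheq O(\kappa^{-1})$. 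The $O(1)$ part of the first and third blocks then yields the mixed problem
\begin{equation*}
  A\bfw'{}^{(0)} \shp B_1\bfu_1'{}^{(0)} = -\bfh, \qquad B_1\tsup\bfw'{}^{(0)} = \bfze, \qquad \bfh\shdeq A'\bfw^{(0)}\shp B_0'\bfu\msup ,
\end{equation*}
whose operator is that of the well-posed problem~(\ref{largekappa:bc}a) (Assumption~\ref{ass:W}). As in Prop.~\ref{prop:largekappa}, the expansion is made rigorous by a truncation-error argument: the remainder $\Delta\sfU'$ solves a problem of the form~\eqref{block5:def} whose data is $O(\kappa^{-1})$ (inheriting the expansion of $(\bfw,\bfu_0)$), so the stability estimates~\eqref{U:largekappa}, whose constants are uniform for $\kappa\shgeq\kappa_0$, give $\|\Delta\sfU'\|_{\Ubb}\sheq O(\kappa^{-1})$.

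Solving the mixed problem with the $A^{-1}$-self-adjoint projector $P_0$ gives $B_1\bfu_1'{}^{(0)}\sheq-P_0\bfh$ and $\bfw'{}^{(0)}\sheq-A^{-1}(I\shm P_0)\bfh$. Substituting the expansion into~\eqref{Lambda'':exp}, two cancellations isolate the leading order: the data term $-\kappa\Dcal(\bfu',\bfu')\sheq-\kappa\Dcal(\bfu_0',\bfu_0')$ is $O(\kappa^{-1})$ because $\bfu_0'\sheq O(\kappa^{-1})$ and $\bfu_1'\shin\Ncal\sheq N(D)$, while the cross term satisfies $\Bcal(\bfu_1'{}^{(0)},\bfw'{}^{(0)})\sheq\lbra\bfu_1'{}^{(0)},B_1\tsup\bfw'{}^{(0)}\rbra\sheq0$ by the constraint. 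Hence
\begin{equation*}
  \LambdaRk''{}^{(0)}(\CS)\sheq\Acal(\bfw'{}^{(0)},\bfw'{}^{(0)},\CS)\sheq\lbra(I\shm P_0)\bfh,\,A^{-1}(I\shm P_0)\bfh\rbra_{\Wcal',\Wcal},
\end{equation*}
which is the announced formula; positivity follows at once since coercivity of $A$ (Assumption~\ref{ass:W}) makes $A^{-1}$ induce a nonnegative quadratic form on $\Wcal'$. As a consistency check, the same leading term drops out of Prop.~\ref{red:hess:sign} on letting $Z\to A$, $Z^{-1}\to A^{-1}$ and using $A^{-1}P_0\sheq P_0\tsup A^{-1}$ and $P_0P_0\sheq P_0$.

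Finally, for $\LambdaRk''{}^{(1)}(\CS)$ I would collect the $O(\kappa^{-1})$ terms, most transparently from the sign-revealing form of Prop.~\ref{red:hess:sign}: the explicit summand $-\kappa^{-1}\lbra B_0' D^{-1}B_0'{}\tsup \bfw,\bfw\rbra$ contributes the nonpositive quantity $-\lbra B_0' D^{-1}B_0'{}\tsup \bfw^{(0)},\bfw^{(0)}\rbra$, whereas the $O(\kappa^{-1})$ corrections of the two difference-of-squares terms (arising through $Z\sheq A\shp\kappa^{-1}B_0D^{-1}B_0\tsup$ and the first-order terms $\bfw'{}^{(1)},\bfu_1'{}^{(1)}$) produce cross terms of no fixed sign. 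Displaying one configuration in which the negative contribution dominates and one in which a positive cross term dominates then shows $\LambdaRk''{}^{(1)}(\CS)$ to be sign-indefinite. I expect this last point, together with the bookkeeping certifying that~\eqref{Lambda'':exp} may be expanded order-by-order in $\kappa^{-1}$, to be the main obstacle; the uniform estimates~\eqref{U:largekappa} are the tool controlling the remainders throughout.
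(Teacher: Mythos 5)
Your argument is correct, and it reaches the leading term by a genuinely different route from the paper. The paper never expands the derivative problem asymptotically: it starts from the exact operator-form solution~\eqref{U':exp} of~\eqref{dd:stat:pb2}, notes that $B_1'\sheq0$ forces $B_1\bfu_1'\sheq P\bfF$ with the $Z$-dependent projector $P\sheq B_1(B_1\tsup Z^{-1}B_1)^{-1}B_1\tsup Z^{-1}$, reduces $\LambdaRk''(\CS)$ to $\lbra(I\shm P)\bfF,Z^{-1}(I\shm P)\bfF\rbra_{\Wcal',\Wcal}-\kappa^{-1}\lbra\Delta\bfw,\bfw\rbra_{\Wcal',\Wcal}$ via Proposition~\ref{red:hess:sign}, and only then expands $Z^{-1}$ and $(B_1\tsup Z^{-1}B_1)^{-1}$ in powers of $\kappa^{-1}$ by the Sherman--Morrison--Woodbury formula, which turns $P$ into $P_0$ plus an $O(\kappa^{-1})$ correction and delivers both coefficients in one pass. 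You instead perturb the derivative problem itself in the spirit of Proposition~\ref{prop:largekappa}, identify the limiting mixed problem $A\bfw'{}^{(0)}\shp B_1\bfu_1'{}^{(0)}\sheq-\bfh$, $B_1\tsup\bfw'{}^{(0)}\sheq\bfze$, and evaluate the raw symmetric form~\eqref{Lambda'':exp}, where the constraint kills the cross term and $\bfu_0'\sheq O(\kappa^{-1})$ kills the data term; this yields $\LambdaRk''{}^{(0)}(\CS)\sheq\Acal(\bfw'{}^{(0)},\bfw'{}^{(0)},\CS)$ with $\bfw'{}^{(0)}\sheq-A^{-1}(I\shm P_0)\bfh$, which is the paper's formula and makes positivity immediate. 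What your route buys is a transparent mechanical interpretation of the leading coefficient and a cheaper computation; what it costs is the remainder bookkeeping you correctly flag: in the normalization of~\eqref{block5:def} the second data component of the derivative problem is $-\kappa\bfd_0\sheq-B_0\tsup{}'\bfw$, i.e.\ $\bfd_0\sheq\kappa^{-1}B_0\tsup{}'\bfw$, so estimates~\eqref{U:largekappa} do give $\|\Delta\sfU'\|_{\Ubb}\sheq O(\kappa^{-1})$, but a second-order version of that argument is still needed to justify the $o(\kappa^{-1})$ remainder and to identify $\LambdaRk''{}^{(1)}$. For that subleading coefficient you rejoin the paper's route through Proposition~\ref{red:hess:sign}; note that the paper's own claim of sign-indefiniteness amounts only to observing that $\LambdaRk''{}^{(1)}$ is an algebraic sum of positive quadratic forms of both signs --- it does not exhibit the two dominating configurations you propose, so that last item of your plan is stronger than what is actually asserted and need not be carried out.
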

\begin{proof}
See Section~\ref{red:hess:proof}.
\end{proof}

\subsection{Reduced MECE functional: small-$\kappa$ limiting case}
\label{sec:small-k-obj}

We restrict this discussion to the ``usual'' case where $\Kcal=\{\bfze\}$, for which we have $\bfw^{(0)}\sheq\bfze$ (see Prop.~\ref{prop:smallkappa}). Using this and expansion~\eqref{ansatz:small} of the stationarity solution in~\eqref{min:C} and~\eqref{LambdaR'}, the reduced MECE functional $\LambdaRk(\CS)$ and its derivative $\LambdaRk'(\CS)$ have the $O(\kappa)$ expansions
\begin{subequations}
\begin{align}
  \LambdaRk(\CS)
 &= \pdemi\kappa \Dcal(\bfu^{(0)}\shm\bfu\msup,\bfu^{(0)}\shm\bfu\msup) + o(\kappa),
\label{LambdaR:smallkappa} \\
  \LambdaRk'(\CS) &= - \kappa\Acal(\bfu^{(0)},\bfw^{(1)},\CSh) + o(\kappa) \label{LambdaR':smallkappa}
\end{align}
\end{subequations}
Consistently with the constitutive equation is enforced exactly in the $\kappa\to0$ limit, we see that the value~\eqref{LambdaR:smallkappa} of the reduced objective $\LambdaRk(\CS)$ is for small $\kappa$ dominated by its data-misfit component $\Dcal$.

Moreover, equations~(\ref{smallkappa:a},c) give $B\bfu^{(0)}_1\sheq\bff$ and $B\tsup\bfw^{(1)}=-\bfd_1 + D_{01}\bfu^{(0)}_0 \shp D_{11}\bfu^{(0)}_1$. If in addition $\Hcal\sheq\{\bfze\}$ (i.e. boundary conditions are well-posed and $\oo$ is not an eigenvalue for problems~\eqref{W:eigen} and~\eqref{strong:u}), we have that (i) $\bfu_0^{(0)}\sheq\bfze$, (ii) $\bfu_1^{(0)}$ is the forward solution, (iii) $\bfw^{(1)}$ is the adjoint solution for the objective function $\DcalR(\CS)$, (iv) the leading contribution to $\kappa^{-1}\LambdaRk(\CS)$ as $\kappa\to0$ is the reduced quadratic misfit functional $\DcalR(\CS)\shdeq\Dcal(\bfu^{(0)}\shm\bfu\msup,\bfu^{(0)}\shm\bfu\msup)$ commonly used for solving PDE-constrained inverse problems, and (v) the leading term in equation~\eqref{LambdaR':smallkappa} coincides with the known expression for $\DcalR'(\CS)$. Consequently, the MECE-based inversion in reduced form becomes the minimization of the non-regularized least-squares misfit $\Dcal$ in the limit $\kappa\to0$.
Similar remarks apply when $\Hcal\shneq\{\bfze\}$, the (now nontrivial) field $\bfu^{(0)}_0\sheq D_{00}^{-1}(\bfd_0\shm D_{10}\bfu^{(1)})$ being such that the leading-order field $\bfu^{(0)}_0\shp\bfu^{(0)}_1$ minimizes $\Dcal(\bfu^{(0)}\shm\bfu\msup,\bfu^{(0)}\shm\bfu\msup)$ for given $\CS$.\enlargethispage*{3ex}

Then, the stationarity solution expansion~\eqref{ansatz:small} yields expansions $\bfw' = \kappa \bfw^{\prime\,(1)} \shp o(\kappa)$ and $\bfu' = \bfu^{\prime\,(0)} \shp \kappa \bfu^{\prime\,(1)} + o(\kappa)$ for the solution derivatives (since $\bfw^{(0)}\sheq\bfze$, which implies $\bfw^{\prime\,(0)}\sheq\bfze$). Consequently, \eqref{Lambda'':exp} provides
\begin{equation}
  \LambdaRk''(\CS)
 = \kappa \lsqb 2\Bcal(\bfu^{\prime\,(0)},\bfw^{\prime\,(1)},\CS)
   -\Dcal(\bfu^{\prime\,(0)},\bfu^{\prime\,(0)}) \rsqb + o(\kappa), \label{Lambda'':exp:smallkappa:K=0}
\end{equation}
whose leading term is \emph{a priori} sign-indefinite (since it can be recast as an algebraic sum of positive quadratic expressions), by contrast with the corresponding result of Theorem~\ref{red:hess} for the large-$\kappa$ case.\enlargethispage*{3ex}

\section{Numerical results}
\label{sec:num}

In this section, we present numerical studies that support our theoretical findings. We first show (Sec.~\ref{infsup:1D}) that the inf-sup constant remains strictly positive over a wide range of frequencies and displays convergent behavior upon mesh discretization. We then show (Sec.~\ref{sec:convergence}) that finite element discretizations are convergent as long as the two key necessary conditions are met. The next example (Sec.~\ref{sec:convexity}) illustrates how the reduced objective becomes convex as $\kappa$ increases. Finally, we show in Sec.~\ref{sec:imaging} a parameter reconstruction example using interior data, which demonstrates the capability of the method to accommodate underspecified boundary conditions.

\subsection{Stability of coupled system: 1D example}
\label{infsup:1D}

In this example, we study the behavior of the inf-sup constant $\beta$ defined in~\eqref{infsup:b} for the operator $\Bcal$ corresponding to longitudinal vibrations at frequency $f\sheq\oo/2\pi$ of a one-dimensional bar fixed at one end, and whose length, mass density, and Young's modulus are taken as one. The boundary condition at the other end is unspecified. The relevant spaces are $\Wcal := \lcb w\shin H^1(\OO), \; w(0) \sheq w(1) \sheq 0 \rcb$ and $\Ucal := \lcb u\shin H^1(\OO), \; u(0) \sheq0 \rcb$, and we have $\Wcal\subsetneq \Ucal$. The bar is discretized using linear finite elements. 

The inf-sup constant $\beta$ was computed for frequencies $f$ in the $[1,20]$~Hz range, in increments of $0.1$~Hz, the finite element mesh was refined until there was negligible change in $\beta$ over the latter frequency range. The numerical evaluation of $\beta$ uses a discretized version of~\eqref{uhat:def}. The requisite approximation $\bfuH_h$ of the Riesz representative $\bfuH$ of $B\tsup\bfw\in\Ucal'$ is obtained by setting $\bfuH_h\sheq \mathbf{N}_h \bfd$ and $\bfw_h=\mathbf{N}_h \bfv$, where $\mathbf{N}_h$ is a matrix of finite element shape functions for a given discretization with characteristic mesh size $h$. We then take $\mathbf{S}_h$ to be the positive-definite matrix such as $\bfd^T \mathbf{S}_h \bfd := \lpar \bfuH_h,\bfuH_h \rpar_{\oo}$ and $\mathbf{B}_h$ the matrix associated with the bilinear form $\Bcal$ after discretization with finite elements.  Then, the nodal values $\bfd$ associated with $\bfuH_h$  can be computed as $\bfd=\mathbf{S}_h^{-1}\mathbf{B}_h\bfv$. The inf-sup constant is obtained from
\begin{equation}
  \beta_h
 = \inf_{\bfw_h \in \Kcal_h^{\perp}}\frac{ \lpar \bfuH_h,\bfuH_h \rpar_{\oo}}{\| \bfw_h\|^2}
 \ge \inf_{\bfw_h \in \Wcal_h} \frac{\lpar \bfuH_h,\bfuH_h \rpar_{\oo}}{\| \bfw_h\|^2}
 = \inf_{\bfv} \frac{\bfv^T \mathbf{B}_h^T \mathbf{S}_h^{-T}\mathbf{B}_h \bfv}{\bfv^T \mathbf{S}_h \bfv} \geq \min_i \lambda_i,
\end{equation}
(the first inequality following from the fact that $\Kcal_h^{\perp} \subset \Wcal_h$), i.e. by finding the smallest eigenvalue of the generalized eigenvalue problem $\mathbf{B}_h^T \mathbf{S}_h^{-T}\mathbf{B}_h\bfv = \lambda \mathbf{S}_h \bfv$. If that eigenvalue is positive, so is $\beta_h$.

\begin{figure}[t]
\begin{subfigure}[b]{0.49\textwidth}
  \includegraphics[width=0.83\linewidth]{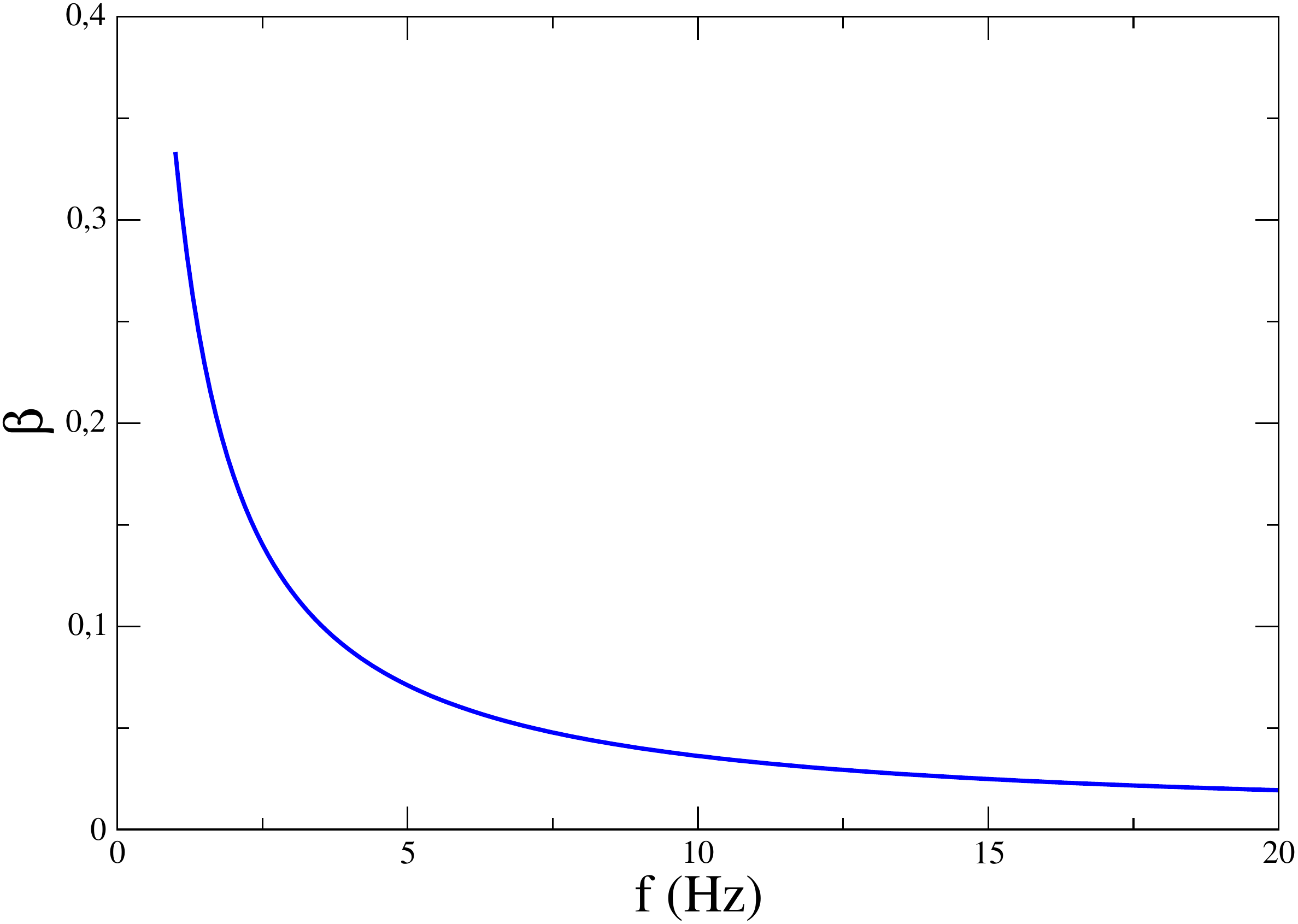}
  \caption{$\beta$ as a function of frequency}
\label{fig:infsup:a}
\end{subfigure}
\begin{subfigure}[b]{0.49\textwidth}
  \includegraphics[width=0.83\linewidth]{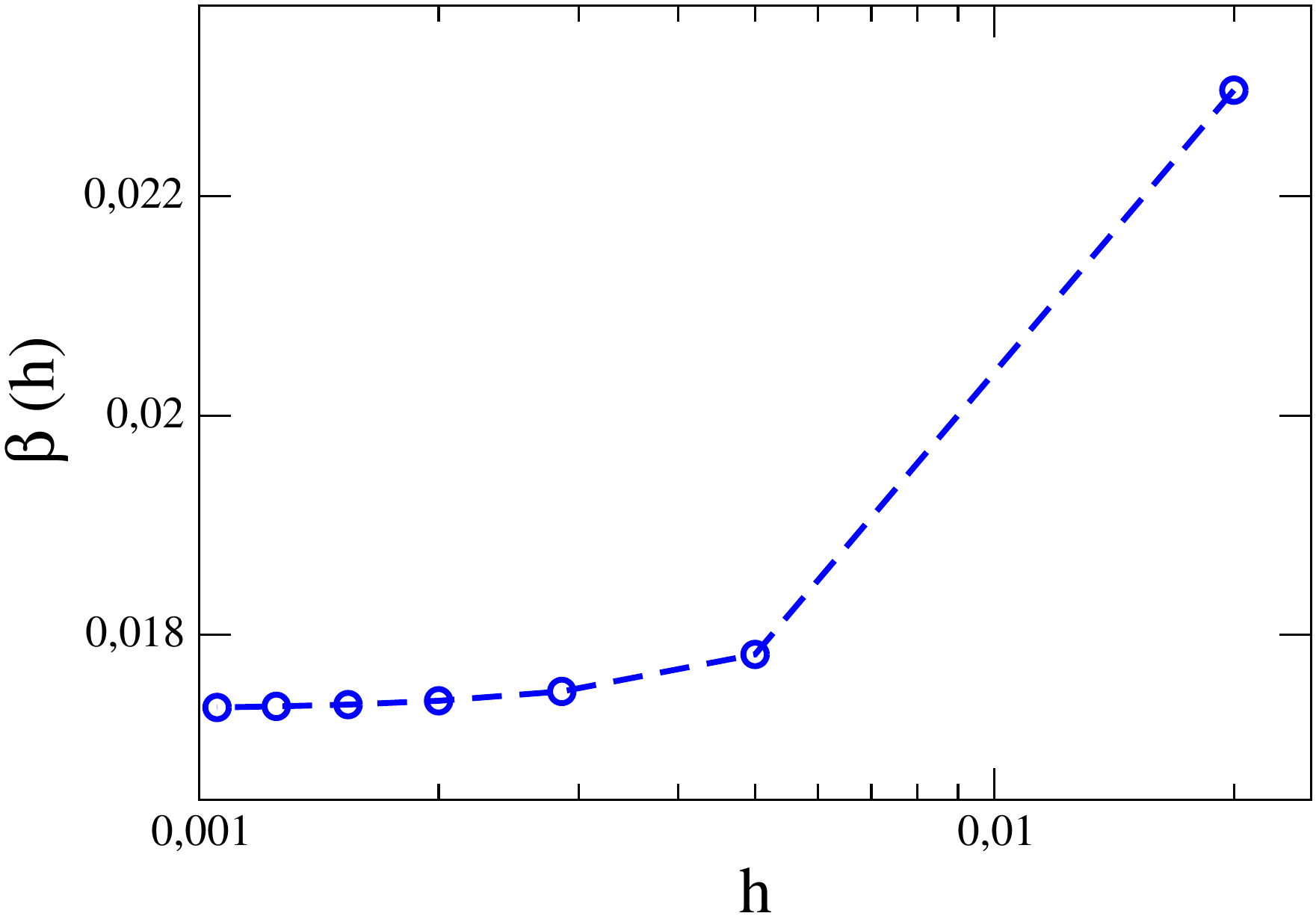}
  \caption{$\beta$ as a function of element size $h$ for $f\sheq 20$~Hz}
\label{fig:infsup:b}
\end{subfigure}
\caption{Inf-sup constant $\beta$ for a one-dimensional bar}
\label{fig:infsup}
\end{figure}

Figure~\ref{fig:infsup:a} shows that indeed $\beta_h$ remains positive for all frequencies investigated in this example. Thus, if the measured data satisfies Assumption~\ref{ass:ker}, the coupled system corresponding to this 1D bar is well-posed in the considered frequency range, in spite of the boundary conditions being underspecified, as expected per Theorem~\ref{wellposed}. Figure~\ref{fig:infsup:b} then shows how $\beta_h$ changes with element size $h$ for a frequency of $20$ Hz, and clearly demonstrates the expected convergence of $\beta$ as the mesh is refined.
  
\begin{figure}[b]
 	\centering
 	\includegraphics[width=0.38\linewidth]{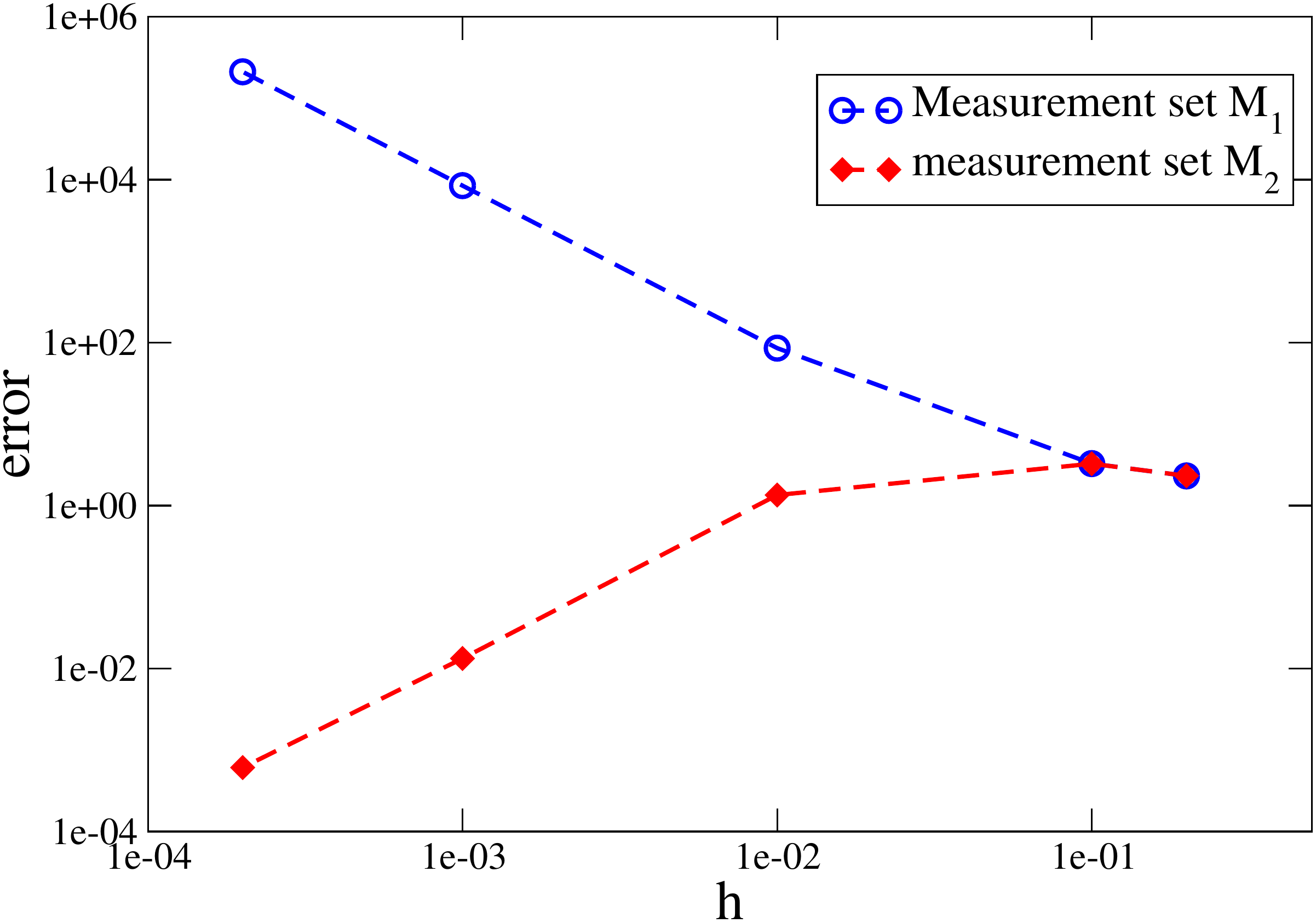}
 	\caption{Convergence of finite element approximations $(u_h,w_h)$ for measurement operators $D_1$ or $D_2$.}
 	\label{fig:convergent}
\end{figure}

\subsection{Well-posedness and convergence of coupled system}
\label{sec:convergence}

In this example, we demonstrate the implications of Assumption~\ref{ass:ker} (i.e. $\Hcal\shcap\Ncal=\{\bfze\}$), which is required for problem~\eqref{stat:pb} to be well-posed, by showing its incidence on the convergence behavior of finite element approximations. We consider again a 1D bar, this time with a Dirichlet condition at both ends, so that we now have $\Wcal = \Ucal = \lcb u\in H^1(\OO), \; u(0) = u(1) = 0 \rcb$. We use an assumed solution $\bar{u} = \sin(\oo x)$ and $\bar{w}= \bar{u}$, which corresponds to an excitation $f= -\oo^2 \sin(\oo x)$ in problem~\eqref{stat:pb}. The observation operator $D$ (as introduced in~\eqref{block4:def}) is of the form $D\bfu=\sum_{k=1}^M u(x_k)$, i.e. involves pointwise measurements at $M$ locations to be specified (pointwise values of elements of $\Ucal$ being well-defined in this 1D setting).\enlargethispage*{3ex}

We study in this example whether finite element discretizations of problem~\eqref{stat:pb} converge and if so at what rate, given that the exact assumed solution $\bar{u}$ is in $\Hcal$. We define (piecewise-linear, conforming) finite element approximations $u_h$ and $w_h$, with $f_h$ being an interpolant of $f$, and consider two versions $D_1$ and $D_2$ of $D$, such that the $M$ measurement locations $x_k$ are taken as $M_1:=\{\frac{\pi}{\oo},\frac{2\pi}{\oo}, \frac{3\pi}{\kappa},...,1 \}$ for $D\sheq D_1$ while $D_2$ uses randomly generated measurement abscissae. Consequently, we have $\bar{u} \in \Ncal$ for $D\sheq D_1$ but $\bar{u} \notin \Ncal$ for $D\sheq D_2$. The case $D\sheq D_1$ thus makes $\Ncal\shcap\Hcal$ non-trivial, in conflict with Assumption~\ref{ass:ker}, whereas $\Ncal\shcap\Hcal=\{0\}$ if $D_2$ is used. We thus expect the coupled system~\eqref{stat:pb} to be ill-posed in the former case, but well-posed in the latter case.

Figure~\ref{fig:convergent} shows the relative error $e_{ls}$ between the manufactured solution $(\bar{u},\bar{w})$ of the coupled system~\eqref{stat:pb} and its finite element approximation $(u_h,w_h)$, defined as
\begin{equation}\label{eq:relativeError}
 e^2_{ls} = \frac{\|\bar{ u} -  u_h\|^2_{L^2(\OO)} + \|\bar{ w} -  w_h\|^2_{L^2(\OO)}}{\|\bar{ u}\|^2_{L^2(\OO)} + \|\bar{ w}\|^2_{L^2(\OO)}},
\end{equation}
as a function of the element size $h$ and for either measurement set. The expected convergence of $(u_h,w_h)$ to $(\bar{u},\bar{w})$ is indeed observed, with the expected $O(h^2)$ rate, if $D=D_2$. By contrast, using $D=D_1$ leads to rapid divergence of $e_{ls}$ as the mesh is refined.

\subsection{Convexity of the reduced MECE functional}
\label{sec:convexity}

We now show an example that demonstrates the convexification of the reduced objective \eqref{min:C} as $\kappa$ increases, predicted by part 2 of Theorem \ref{red:hess}. We consider again a 1D bar problem, this time of the form
\begin{equation}
  -\lpar E(x)u'(x) \rpar'-(2\pi f)^2 u(x) = b \quad (x\in]0,1[), \qquad u(0)=u(1)=0
\label{probl:def:1D:heter}
\end{equation}
with the inhomogeneous Young modulus taken as $E(x)=1_{]0,0.5[}(x)\,E_1 + 1_{]0.5,1[}(x)\,E_2$ and the excitation frequency set to $f\sheq 3$~Hz. The bar is discretized using $100$ linear finite elements. Displacements are assumed to be measured at all nodes. The reduced objective $\LambdaRk(E_1,E_2)$ was computed for $(E_1,E_2)\in[0.5,2.5]^2$ and several values of $\kappa$, by solving the coupled system~\eqref{stat:pb} for each combination $(E_1,E_2)$ and then computing \eqref{min:C}.\enlargethispage*{1ex}

Figures~\ref{fig:convex_1}-\ref{fig:convex_4} illustrate how the reduced objective $(E_1,E_2)\mapsto\LambdaRk(E_1,E_2)$ changes with $\kappa$, and in particular show clearly that this function is progressively smoother as $\kappa$ increases and becomes convex for large $\kappa$ (Figure~\ref{fig:convex_4}). Furthermore, for very small values of $\kappa$ (Figure~\ref{fig:convex_1}) the objective resembles that of a least squares functional, as predicted by the asymptotic analysis of Section \ref{sec:small-k-obj}. 
 
These results have strong practical implications. The least-squares objective function (small-$\kappa$ limiting case) has many local minima, making the inversion results strongly dependent on the initial guess. On the other hand, the MECE objective for intermediate and large $\kappa$ is much smoother and convex, which translates into robustness with respect to the initial guess.
 
It is important to mention that the value of $\kappa$ used for a given problem has to be set according to the level of noise in the data.  Hence, although our analysis indicates that choosing $\kappa$ as large as possible is beneficial for convexity, doing so without regard to noise level would likely produce poor reconstructions. This results from the fact that, as $\kappa$ increases, $\bfu$ becomes closer to the measured data $\bfu\msup$ and may over-fit the noise. See~\cite{B-2013-8} for some discussion on how to choose $\kappa$ according to the Morozov discrepancy principle and a heuristic approach termed error balance.
 
\begin{figure}[h]\label{fig:convexityExample}
 	\centering
 	\begin{subfigure}[b] {0.245\textwidth}
		\includegraphics[width=\textwidth]{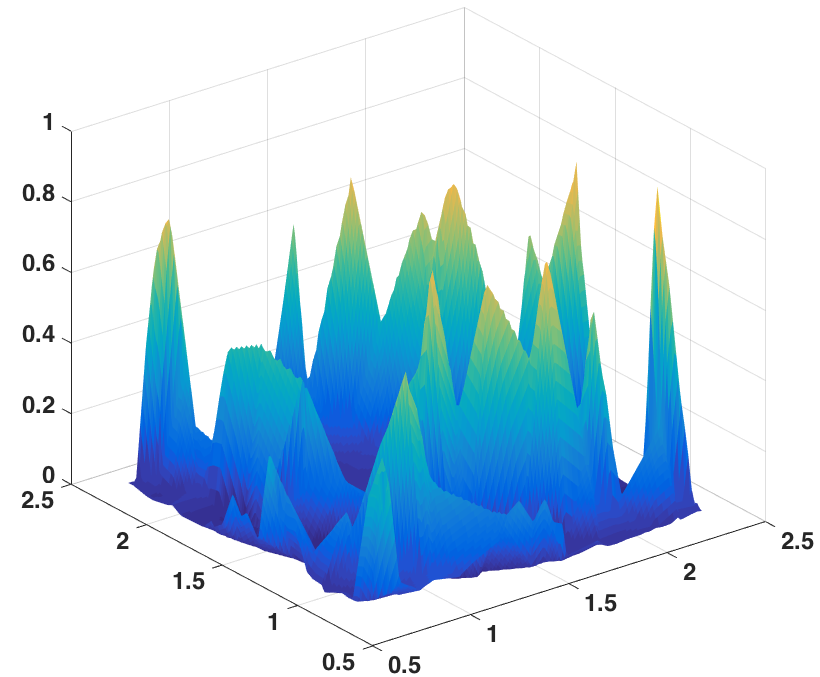}
		\caption{$\kappa = 0.001$}
		\label{fig:convex_1} 
	\end{subfigure}
	\begin{subfigure}[b] {0.245\textwidth}
		\includegraphics[width=\textwidth]{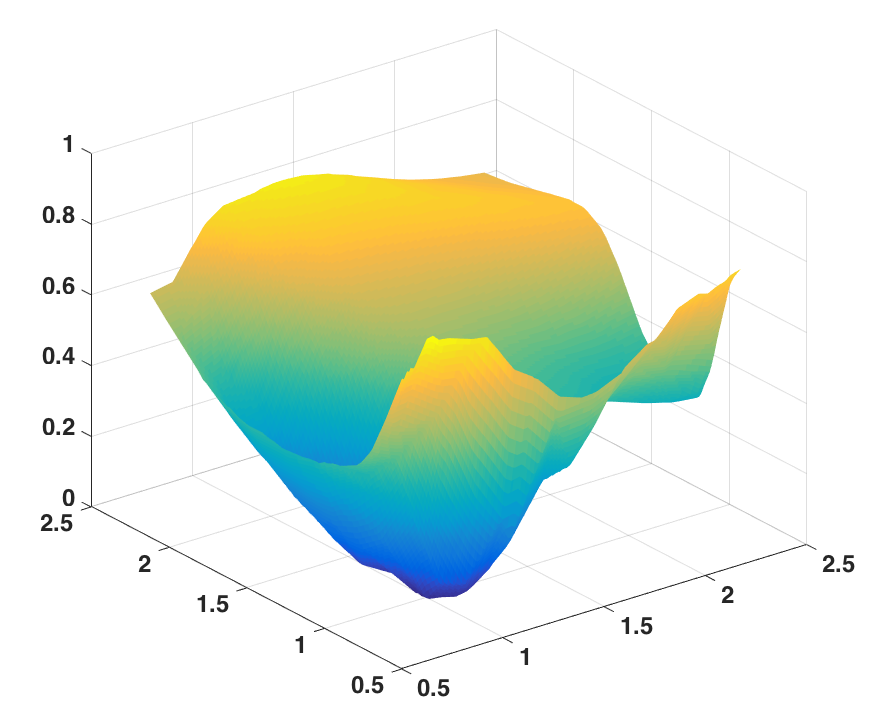}
		\caption{$\kappa = 0.1$}
		\label{fig:convex_2}
	\end{subfigure}
 	\begin{subfigure}[b] {0.245\textwidth}
		\includegraphics[width=\textwidth]{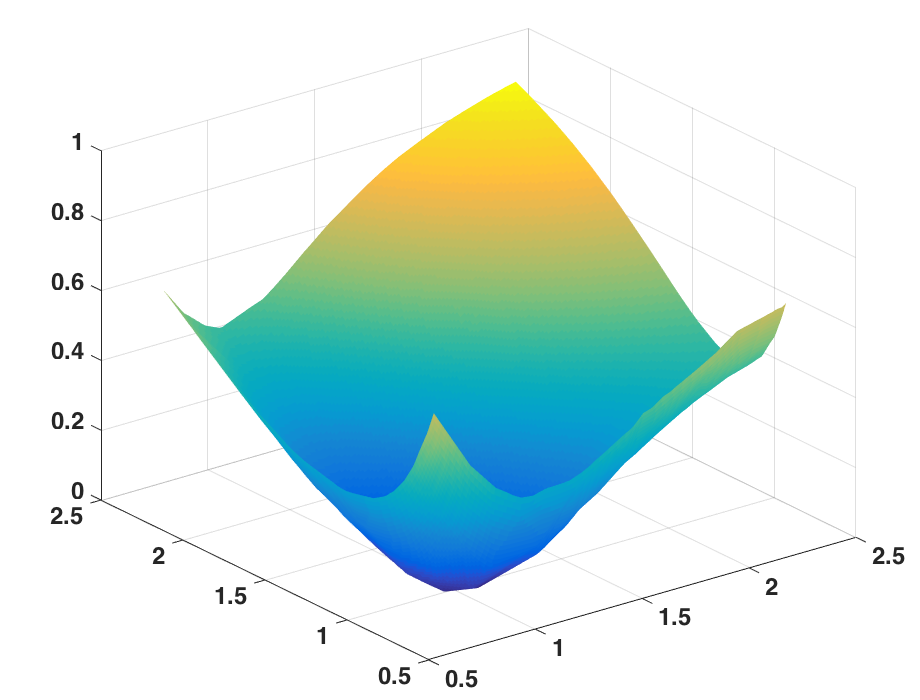}
		\caption{$\kappa = 1$}
		\label{fig:convex_3}
	\end{subfigure}
	\begin{subfigure}[b] {0.245\textwidth}
		\includegraphics[width=\textwidth]{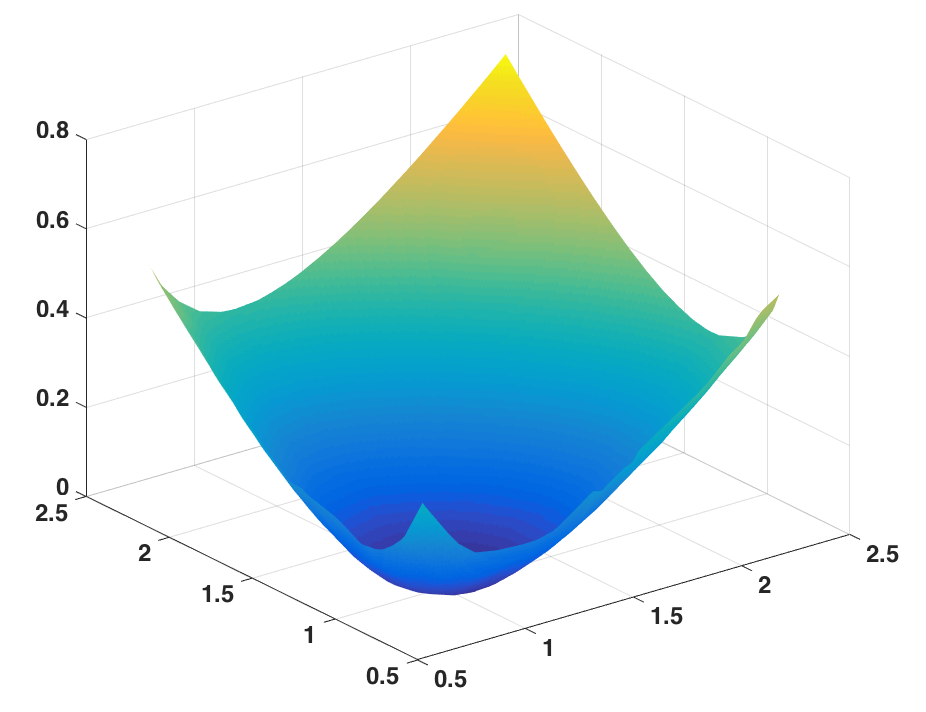}
		\caption{$\kappa = 10^{5}$ }
		\label{fig:convex_4}
	\end{subfigure}
\caption{Reduced objective $\LambdaRk(E_1,E_2)$: convexification with increasing $\kappa$}
\end{figure}
 
\subsection{2D identification example}
\label{sec:imaging}
 
In this last example, we demonstrate that we can successfully reconstruct material properties with MECE when boundary conditions are completely unknown but full field measurements are available in part of the domain. We consider a square elastic domain with an unknown circular inclusion, under 2D plane-strain conditions; see Figure~ \ref{fig:inversion_sketch}. The synthetic experiment consists in loading the body with a uniform time-harmonic pressure of $1$~kPa applied on the top side at a frequency of $10$~Hz, the bottom side being kept fixed, and with the bulk and shear moduli set to $B=8\text{\,kPa},\ G=1.5\text{\,kPa}$ (background) and $B=20\text{\,kPa},\ G=4\text{\,kPa}$ (inclusion). The mass density is uniform, with $\rho=1000\,\text{kg/m}^3$.
Both components of the displacement are assumed to be measured over a dense grid of points in the subdomain $\OO\msup$ (delineated with a dashed line in Figure~\ref{fig:inversion_sketch}). Corresponding synthetic measurements were generated with a fine finite element mesh and interpolated onto a coarser, regular, reconstruction mesh made of $69\shtimes 69$ square 4-noded elements. Each element of that mesh supports two material unknowns $B,G$.  The boundary conditions on all four sides of $\OO\msup$ are taken as unknown.\enlargethispage*{3ex}
 
The reconstruction results shown in Figure~\ref{fig:inversion_results} demonstrate that material properties can be imaged accurately even without the knowledge of boundary conditions; in particular, the recovered values of $B$ and $G$ are close on average to their target values. Interestingly, the reconstructed inclusion displays clear and sharp edges despite the fact that no additional regularization is used for the fields $B,G$.
  
\begin{figure}[t]
\centering
\begin{subfigure}[b]{0.48\textwidth}
\centering
  \includegraphics[width=0.7\linewidth]{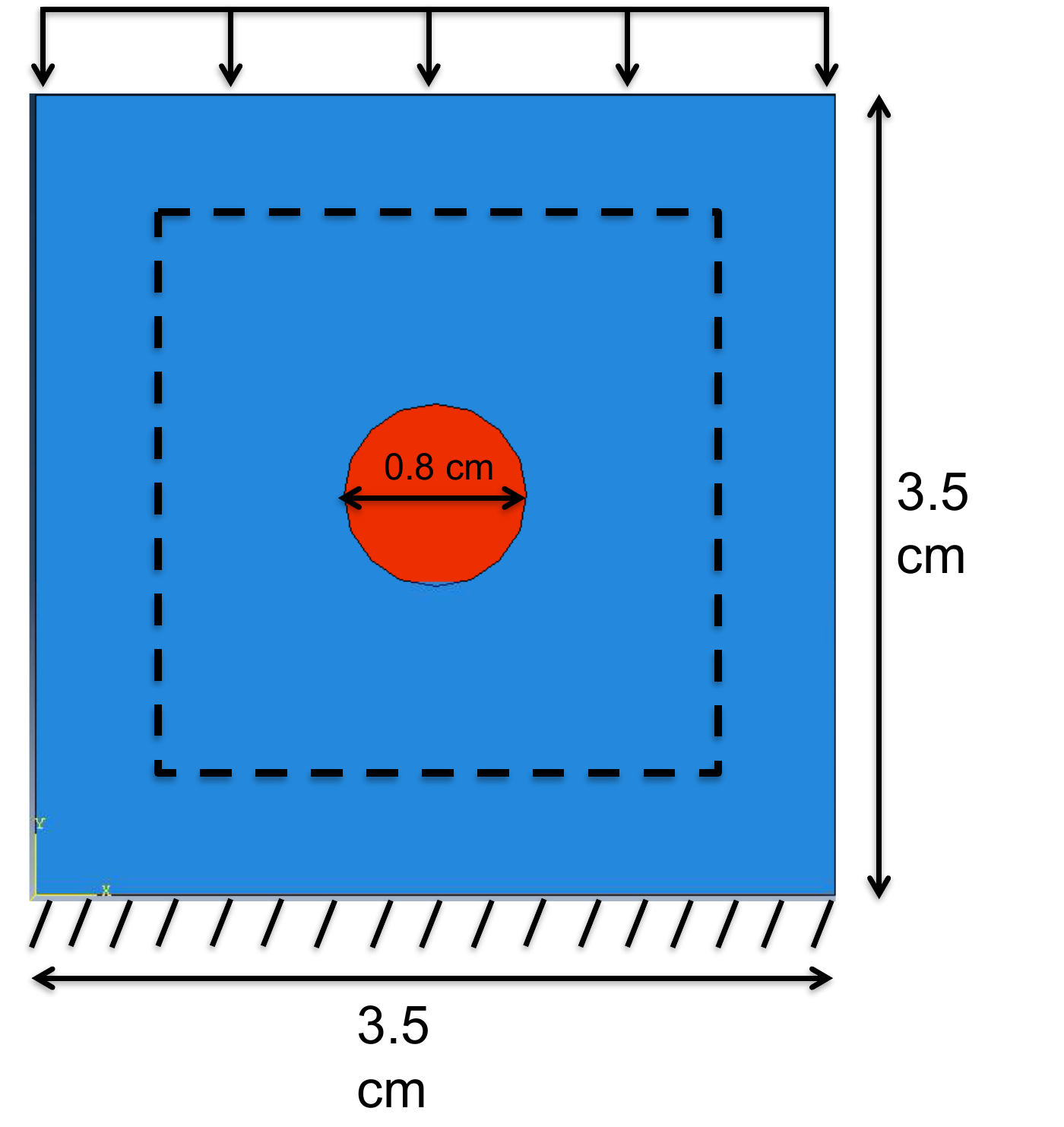}
  \caption{Example configuration and notations\\ \mbox{} \\ \mbox{}}
  \label{fig:inversion_sketch}
\end{subfigure}
\begin{subfigure}[b]{0.48\textwidth}
  \centering
 	\includegraphics[width=0.7\linewidth]{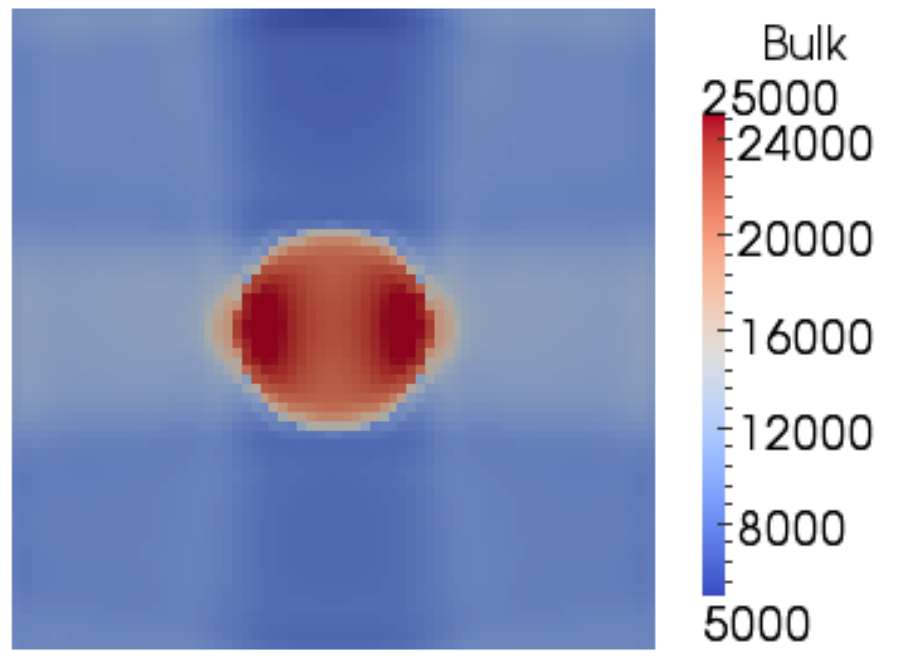}\vspace*{3ex}
 	\includegraphics[width=0.7\linewidth]{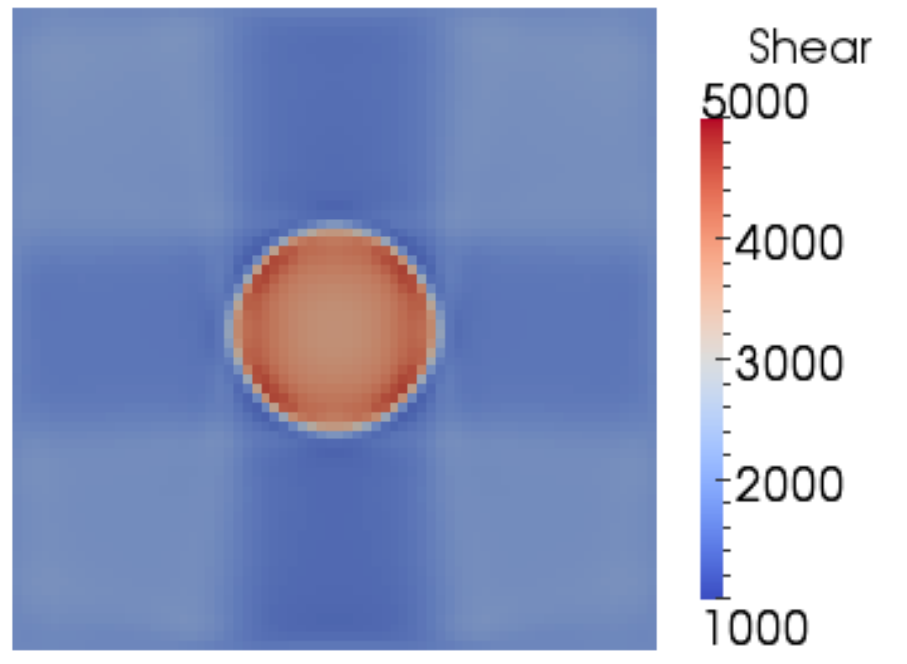}
 	\caption{Results for the reconstructed bulk modulus (top) and shear modulus (bottom) within $\OO\msup$ with unknown boundary conditions.}
 	\label{fig:inversion_results}
\end{subfigure}
 	\caption{2D identification example: definition and results.}
\end{figure}
 
\section{Proofs}
\label{sec:proofs}

\subsection{Proof of Lemma~\ref{infsup}}
\label{proof:infsup}

Let $\bfuH\in\Ucal$ solve the variational problem $\lpar \bfuT,\bfuH \rpar_{\oo}=\Bcal(\bfuT,\bfw)$ for all $\bfuT\shin\Ucal$ (i.e. $\bfuH$ is the $\bfw$-dependent Riesz representant of the linear functional $B\tsup\bfw\in\Ucal'$). We then have
\begin{equation}
  \Bcal(\bfu,\bfw)=\lpar \bfu,\bfuH \rpar_{\oo}, \qquad
  \sup_{\bfu\in\Ucal} \frac{\Bcal(\bfu,\bfw)}{\|\bfu\|} = \| B\tsup\bfw \| = \|\bfuH\|_{\oo}
\label{uhat:def}
\end{equation}

Let $(\bfps_n)_{n\geq1}$ and $(\oo_n)_{n\geq1}$ denote the countable sets of eigenfunctions and eigenvalues associated with problem~\eqref{W:eigen}, i.e. such that each $\bfps_n\shin\Ucal$ verifies $\Acal(\bfps_n,\bfuT) - \oo_n^2 (\rho\bfps_n,\bfuT) = 0$ for all $\bfuT\shin\Ucal$ (we assume for definiteness the normalization $\lpar \rho\bfps_n,\bfps_n \rpar=1$ and the ordering $0\shleq\oo_1\shleq\oo_2\shleq\ldots$); then $(\bfps_n)_{n\geq1}$ is a Hilbert basis of $\Ucal$. Let $I:=\{n\shin\Nbb,\; \oo\sheq\oo_n\}$, so that $\Zcal=\text{span}\lpar \bfps_{n,n\in I} \rpar$, noting that $\Zcal$ is finite-dimensional. With this convention, $I\sheq\emptyset$, i.e. $\Zcal\sheq\{\bfze\}$, if $\oo$ is not an eigenvalue for problem~\eqref{W:eigen}. Expanding the fields $\bfuH\shin\Ucal$ and $\bfw\shin\Wcal$ on the Hilbert basis $(\bfps_n)_{n\geq1}$ (noting for the latter that $\Wcal\shsubs\Ucal$) as
\[
  \bfuH = \sum_{n\geq1} \uhat_n\bfps_n, \qquad \bfw = \sum_{n\geq1} w_n\bfps_n, 
\]
the weak problem linking $\bfuH$ to $\bfw$ then implies (using $\bfuT=\psi_n$ as test functions)
\[
  \uhat_n = w_n \frac{\oo_n^2-\oo^2}{\oo_n^2+\oo^2}.
\]
We therefore obtain
\begin{equation}
  \sup_{\bfu\in\Ucal} \frac{\Bcal(\bfu,\bfw)}{\|\bfu\|} = \|\bfuH\|_{\oo}
   = \Lpar \sum_{n\geq1} \rho(\oo_n^2+\oo^2) |\uhat_n|^2 \Rpar^{1/2}
   = \Lpar \sum_{n\notin I} \rho\frac{(\oo_n^2-\oo^2)^2}{\oo_n^2+\oo^2} |w_n|^2 \Rpar^{1/2} \label{sup:b}
\end{equation}
To establish the existence of an inf-sup constant $\beta$ such that~\eqref{infsup:b} holds, we separately examine two cases: (i) $\Wcal=\Ucal$ and (ii) $\Wcal\subsetneq\Ucal$ (recall that $\Wcal\shsubs\Ucal$).\enlargethispage*{3ex}

\subparagraph{Case (i): $\Wcal=\Ucal$.} This case corresponds to $\G=\GD\shcup\GN$, and we have $\Hcal=\Kcal=\Zcal$. Let $P$ be the orthogonal projection on $\Zcal$, so that
\[
  \sum_{n\notin I} w_n\bfps_n = (I-P)\bfw, \qquad
  \sum_{n\notin I} \rho(\oo_n^2+\oo^2)|w_n|^2 = \|(I-P)\bfw\|^2_{\oo}
\]
Using~\eqref{sup:b}, we have
\begin{equation}
  \Lpar \sup_{\bfu\notin\Ucal} \frac{\Bcal(\bfu,\bfw)}{\|\bfu\|} \Rpar^2
 \geq \Lpar \inf_{n\notin I} \frac{(\oo_n^2-\oo^2)^2}{(\oo_n^2+\oo^2)^2} \Rpar \sum_{n\notin I} \rho(\oo_n^2+\oo^2)|w_n|^2
 = \Lpar \inf_{n\notin I} \frac{(\oo_n^2-\oo^2)^2}{(\oo_n^2+\oo^2)^2} \Rpar \|(I-P)\bfw\|^2_{\oo}. \label{sup:b2}
\end{equation}
Moreover, $\|(I-P)\bfw\|^2_{\oo}=\|\bfw\|^2_{\oo}$ since $\bfw\in\Kcal{}^{\perp}=\Zcal^{\perp}$. Therefore
\begin{equation}
  \sup_{\bfu\in\Ucal} \frac{\Bcal(\bfu,\bfw)}{\|\bfu\|_{\oo}\|\bfw\|_{\oo}}
  \geq \beta, \qquad \beta:=\inf_{n\notin I} \frac{|\oo_n^2-\oo^2|}{\oo_n^2+\oo^2}. \label{sup:b3}
\end{equation}
Since $\beta$ does not depend on $\bfw$, the inf-sup condition~\eqref{infsup:b} holds true with the above value of $\beta$.

\subparagraph{Case (ii): $\Wcal\subsetneq\Ucal$.} In this case, $\Kcal=\{\bfze\}$, so that the infimum in~\eqref{infsup:b} is taken over $\bfw\shin\Wcal$. If $I\sheq\emptyset$, then $\Zcal=\{\bfze\}$ and we again have $\|(I-P)\bfw\|^2_{\oo}=\|\bfw\|^2_{\oo}$; consequently, the argument of case (i) still applies, equations~\eqref{sup:b2} and~\eqref{sup:b3} remain valid (with infimums taken over all integers), and the inf-sup condition~\eqref{infsup:b} again holds with $\beta$ as given by~\eqref{sup:b3}.

On the other hand, a distinct approach is needed for the case $I\shneq\emptyset$ because $\|(I-P)\bfw\|^2_{\oo}=\|\bfw\|^2_{\oo}$ no longer holds for any $\bfw\shin\Wcal$. Instead, we will show below that
\begin{equation}
  \text{There exists } \xi\shg0\; \text{ such that \ } \|(I-P)\bfw\|_{\oo} \geq \xi \|\bfw\|_{\oo} \text{ for all }\bfw\shin\Wcal, \label{min:proj}
\end{equation}
implying
\begin{equation}
  \sup_{\bfu\in\Ucal} \frac{\Bcal(\bfu,\bfw)}{\|\bfu\|_{\oo}\|\bfw\|_{\oo}}
  \geq \beta, \qquad \beta:= \xi\inf_{n\notin I} \frac{|\oo_n^2-\oo^2|}{\oo_n^2+\oo^2}. \label{sup:b4}
\end{equation}
The lemma results from~\eqref{sup:b4}, since that value of $\beta$ does not depend on $\bfw$, provided~\eqref{min:proj} holds.

\paragraph{Proof of~\eqref{min:proj}.} We first note that $\Wcal\shcap\Zcal=\{\bfze\}$: $\bfw\shin\Wcal$ requires $\bfw\sheq\bfze$ on $\Gc$, whereas $\bfw\shin\Zcal$ implies $\bfL(\oo)\bfw\sheq\bfze$ and $\bft[\bfw]\sheq\bfze$ on $\Gc$, and the unique continuation principle implies that only $\bfw\sheq\bfze$ can fulfill all requirements.

The proof then proceeds by contradiction. Assume that~\eqref{min:proj} is false. In that case, we have
\[
  \text{for all } \xi\shg0,\; \text{there exists } \bfw\shin\Wcal,\; \| \bfw-P\bfw\|_{\oo} < \xi \|\bfw\|_{\oo}.
\]
Choose a sequence $(\xi_n)\shg0$ such that $\xi_n\to0$, $n\to\infty$, and for each $\xi_n$ choose $\bfw_n$ such that $\|\bfw_n-P\bfw_n\|_{\oo} < \xi_n \|\bfw_n\|_{\oo}$. $P$ being linear, $\|\bfw_n\|\sheq1$ may be assumed for each $n$ without detriment. Then, $P$ being a projection, $\|P\bfw_n\|\shleq1$: each $P\bfw_n$ belongs to the unit ball $Z=\{\bfz\shin\Zcal,\;\|\bfz\|\shleq1\}$ of $\Zcal$. As $\text{dim}(\Zcal)\shl\infty$, (i) $Z$ is compact, so the sequence $(P\bfw_n)$ contains a convergent subsequence (still denoted $(P\bfw_n)$), whose limit is denoted $\bfz$, and (ii) $\Zcal$ is closed, so $\bfz\shin\Zcal$. On the other hand, we have
\[
  \|\bfw_n-\bfz\|_{\oo} \,=\, \|\,(\bfw_n-P\bfw_n) + (P\bfw_n-\bfz)\,\|_{\oo} \,\leq\, \xi_n + \|P\bfw_n-\bfz\|_{\oo}
\]
implying that $\bfw_n\to\bfz$ as $n\to\infty$. Since $\bfw_n\in\Wcal$ and the Dirichlet trace on $\Gu\shcup\Gc$ is continuous,  $\bfz\in\Wcal$; moreover we have that $\|\bfz\|_{\oo}=1$ (as the limit of a sequence of elements of unit norm), and hence $\bfw\shneq\bfze$. Summarizing, we simultaneously have $\bfw\shin\Wcal\shcap\Zcal$ and $\bfw\shneq\bfze$, which leads to a contradiction because $\Wcal\shcap\Zcal=\{\bfze\}$. Concluding, \eqref{min:proj} is true.

\subsection{Proof of Theorem~\ref{wellposed}}
\label{wellposed:proof}

The proof methods follows that of~\cite[Thm. 4.3.1]{boffi}. We first observe that setting $\bfuT=-\bfu$ and $\bfwT=\bfw$ in~\eqref{stat:pb} and adding the resulting equalities yields $\Acal(\bfw,\bfw) + \kappa \Dcal(\bfu,\bfu) = \kappa\lbra \bfd,\bfu \rbra_{\Ucal',\Ucal} + \lbra \bff,\bfw \rbra_{\Wcal',\Wcal}$
which, by virtue of the assumed coercivity of $\Acal$ on $\Wcal\shtimes\Wcal$, implies the inequality
\begin{equation}
  \alpha \|\bfw\|^2 \leq \|\bff\|\,\|\bfw\| + \kappa\lpar \|\bfd_0\| \|\bfu_0\| + \|\bfd_1\| \|\bfu_1\| \rpar \label{ineq}
\end{equation}
Then, equation~(\ref{stat:pb}b) with $\bfuT=\bfu_0\in\Hcal$ gives
\[
  \Dcal(\bfu_0,\bfu_0) = \lbra \bfd_0,\bfu_0 \rbra_{\Ucal',\Ucal} - \Dcal(\bfu_1,\bfu_0),
\]
which implies the inequality $\delta\|\bfu_0\|^2 \leq \|\bfd_0\| \lpar \|\bfu_0\| + d\,\|\bfu_1\| \rpar$, where the left-hand side results from $\Dcal$ being coercive on $\Hcal\shtimes\Hcal$ by Assumption~\ref{ass:ker}, and the right-hand side from assumed continuity of $\Dcal$ and $\bfd_0$. This results in
\begin{equation}
  \|\bfu_0\| \leq \delta^{-1} \|\bfd_0\| + \delta^{-1} d\,\|\bfu_1\|. \label{ineq:01}
\end{equation}
Finally, using equation~(\ref{stat:pb}a) in operator form (i.e. $A\bfw + B\bfu_1 = \bff$, since $B\bfu_0=\bfze$), we have
\begin{equation}
   \|B\bfu_1\| = \|\bff - A\bfw\| \leq \|A\bfw\| + \|\bff\|,
\end{equation}
which, using the inf-sup condition~\eqref{infsup:b} (which implies that $\|B\bfu_1\|\geq\beta\|\bfu_1\|$ for any $\bfu_1\shin\Hcal^{\perp}$), yields the inequality
\begin{equation}
  \|\bfu_1\| \leq \beta^{-1}a \|\bfw\| + \beta^{-1}\|\bff\|. \label{ineq1}
\end{equation}

We now exploit inequalities~\eqref{ineq}, \eqref{ineq:01} and~\eqref{ineq1}, considering separately each of the three possible cases where only one of $\bff,\bfd_0,\bfd_1$ is nonzero. Considering first the case $\bff\shneq\bfze$, inequalities~\eqref{ineq}, \eqref{ineq:01} and~\eqref{ineq1} readily provide the estimates
\begin{equation}
  \|\bfw\| \leq \alpha^{-1}\|\bff\|, \qquad
  \|\bfu_0\| \leq \beta^{-1}q_d(1\shp q_a) \|\bff\|, \qquad
  \|\bfu_1\| \leq \beta^{-1}(1\shp q_a) \|\bff\|,
\end{equation}
with $q_a,q_d$ as defined in the statement of the proposition.

Consider next the case $\bfd_1\shneq\bfze$. Inequalities~\eqref{ineq}, \eqref{ineq:01} and~\eqref{ineq1} then become
\begin{equation}
  \alpha \|\bfw\|^2 \leq \kappa \|\bfd_1\| \|\bfu_1\|, \qquad
  \|\bfu_0\| \leq q_d \|\bfu_1\|, \qquad
  \|\bfu_1\| \leq r_a \|\bfw\|
\end{equation}
(with $r_a$ as defined in the statement of the proposition), from which we obtain the estimates
\begin{equation}
  \|\bfw\| \leq \kappa \beta^{-1}q_a\|\bfd_1\|, \qquad
  \|\bfu_0\| \leq \kappa \beta^{-1}q_a q_d r_a\|\bfd_1\|, \qquad
  \|\bfu_1\| \leq \kappa \beta^{-1}q_a r_a\|\bfd_1\|.
\end{equation}

Finally, for the case $\bfd_0\shneq\bfze$, inequalities~\eqref{ineq}, \eqref{ineq:01} and~\eqref{ineq1} provide
\begin{equation}
  \alpha \|\bfw\|^2 \leq \kappa \|\bfd_0\| \|\bfu_0\|, \qquad
  \|\bfu_0\| \leq \delta^{-1} \|\bfd_0\| + \delta^{-1} d\,\|\bfu_1\|, \qquad
  \|\bfu_1\| \leq r_a\|\bfw\|,
\end{equation}
Concatenating the above three bounds gives the inequality
\[
  \|\bfw\|^2 - \kappa\alpha^{-1} q_d r_a \|\bfd_0\|\,\|\bfw\| - \kappa\alpha^{-1}\delta^{-1} \|\bfd_0\|^2 \leq 0.
\]
Being of the form $\|\bfw\|^2 \shm A\|\bfw\| \shm B \leq 0$ with $A,B\shg0$, it holds for all $\|\bfw\|\shleq W$ with $W$ the positive root of $w^2 \shm Aw \shm B \sheq 0$, i.e.:
\begin{equation}
  \|\bfw\| \leq Q\|\bfd_0\|, \qquad
  2Q := \kappa\alpha^{-1}\lpar q_d r_a + \sqrt{4\alpha(\kappa\delta)^{-1} + q^2_c r^2_a} \rpar. \label{d0:est1}
\end{equation}
The remaining sought estimates are then
\begin{equation}
  \|\bfu_0\| \leq (1\shp q_d r_a Q) \|\bfd_0\|, \qquad
  \|\bfu_1\| \leq r_a Q\|\bfd_0\|. \label{d0:est2}
\end{equation}
The stationarity problem~\eqref{stat:pb} being linear, the estimates for $\bfu_0,\,\bfu_1,\,\bfw$ for general right-hand sides $\bfd=\bfd_0\shp\bfd_1$ and $\bff$ follow from the triangle inequality.

Finally, since $\Ucal\sheq\Hcal\shoplus\Hcal^{\perp}$, we have $\|\sfF\|_{\Ubb}^2 = \|\kappa\bfd_0\|^2 \shp \|\kappa\bfd_1\|^2 \shp \|\bff\|^2$, implying $\|\kappa\bfd_0\|\shleq \|\sfF\|_{\Ubb}$, $\|\kappa\bfd_1\|\shleq \|\sfF\|_{\Ubb}$ and $\|\bff\|\shleq \|\sfF\|_{\Ubb}$. The obtained estimates of $\|\bfu_0\|$, $\|\bfu_1\|$ and $\|\bfw\|$ therefore collectively imply that there exists a constant $C\shg0$ such that $\|\sfW\|_{\Ubb}\shl C\|\sfG\sfW\|_{\Ubb}$ for any $\sfW\shin\Ubb$. Well-posedness of problem~\eqref{stat:pb} follows by lemma~\ref{bounding} (with $\eta=C^{-1}$).

\subsection{Proof of estimates~\eqref{U:largekappa}}
\label{est:largekappa:proof}

Let $\bfz_0\shin\Ncal^{\perp}$ solve the problem
\begin{equation}
  \kappa\Dcal(\bfz_0,\bfuT_0)
 = \kappa\lbra \bfd_0,\bfuT_0 \rbra \qquad \text{for all }\bfuT_0\shin\Ncal^{\perp}, \label{z0:def}
\end{equation}
which is well-posed ($\Dcal$ being coercive on $\Ncal^{\perp}\shtimes\Ncal^{\perp}$ by Assumption~\ref{ass:ker:largekappa}); moreover, $\bfz_0$ obeys the estimate $\|\bfz_0\|\shleq\delta^{-1}\|\bfd_0\|$. Introducing the new unknown $\bfy\shdeq\bfu\shm\bfz_0$, the coupled problem~\eqref{stat:pb} becomes
\begin{equation}
\begin{aligned}
  \text{(a) \ }&& \Acal(\bfw,\bfwT) + \Bcal(\bfy,\bfwT) &= \lbra \bfh, \bfwT \rbra_{\Wcal',\Wcal} && \text{for all }\bfwT\shin\Wcal, \\
  \text{(b) \ }&& \Bcal(\bfuT,\bfw) - \kappa \Dcal(\bfy,\bfuT) &= \bfze && \text{for all }\bfuT\shin\Ucal.
\end{aligned} \label{stat:pb:largekappa}
\end{equation}
with $\bfh\shdeq\bff\shm B_0\bfz_0$ (since $\bfz_0\shin\Ncal^{\perp}$). Setting $\bfuT=-\bfy$ and $\bfwT=\bfw$ in the above system and adding the resulting equalities yields $\Acal(\bfw,\bfw) + \kappa \Dcal(\bfy,\bfy) = \lbra \bfh,\bfw \rbra_{\Wcal',\Wcal}$, which (by coercivity of $\Acal$ on $\Wcal\shtimes\Wcal$ and applying the triangle inequality to the right-hand side) implies the inequality
\begin{equation}
  \|\bfw\| \leq \alpha^{-1} \|\bfh\| \shleq \alpha^{-1} \lpar \|\bff\| + s_d \|\bfd_0\| \rpar \label{w:largekappa}
\end{equation}
Then, equation~(\ref{stat:pb:largekappa}b) with $\bfuT=\bfu_0\in\Ncal^{\perp}$ gives (in operator form) $\kappa D\bfy_0=B\tsup_0\bfw$, and hence, using~\eqref{w:largekappa}, implies
\begin{equation}
  \|\bfy_0\| \leq \kappa^{-1}\delta^{-1} b\|\bfw\| \leq \kappa^{-1}\delta^{-1} s_a \lpar \|\bff\| + s_d \|\bfd_0\| \rpar.
   \label{v0:largekappa}
\end{equation}
Finally, using equation~(\ref{stat:pb:largekappa}a) in operator form (i.e. $A\bfw \shp B_0\bfy_0 \shp B_1\bfu_1 = \bfh$), we have
\begin{equation}
   \|B_1\bfu_1\| = \|\bfh - A\bfw - B_0\bfy_0 \| \leq \|\bfh\| + \|A\bfw\| + \|B_0\bfy_0\|,
\end{equation}
which, using the inf-sup condition~\eqref{infsup:b} (which provides $\|B_1\bfu_1\|\shgeq\beta\|\bfu_1\|$ for any $\bfu_1\shin\Ncal$ since $\Ncal\shsubs\Hcal^{\perp}$), yields the inequality
\begin{equation}
  \|\bfu_1\|
 \leq \beta^{-1} \lpar a \|\bfw\| \shp b \|\bfy_0\| \shp \|\bfh\| \rpar
 \leq \lpar 1 \shp q_a \shp \kappa^{-1} s_a s_d \rpar \beta^{-1}\lpar \|\bff\| \shp s_d \|\bfd_0\| \rpar.
\end{equation}
Estimates~\eqref{U:largekappa} finally stem from recalling that $\bfu_0\sheq\bfy_0\shp\bfz_0$.

\subsection{Proof of Proposition~\ref{red:hess:sign}}
\label{red:hess:proof:sign}

First, recasting~\eqref{Lambda'':exp} using operator notation, we have
\begin{multline}
  \LambdaRk''(\CS)
 = \lbra A\bfw',\bfw' \rbra_{\Wcal',\Wcal} + 2\lbra B\bfu',\bfw' \rbra_{\Wcal',\Wcal} - \kappa\lbra D\bfu',\bfu'\rbra_{\Ucal',\Ucal} \\
 = \lbra A(\bfw'\shp A^{-1}B\bfu'),\, \bfw'\shp A^{-1}B\bfu' \rbra_{\Wcal',\Wcal}
 - \lbra B\bfu', A^{-1}B\bfu' \rbra_{\Wcal',\Wcal} - \kappa\lbra D\bfu_0',\bfu_0'\rbra_{\Ucal',\Ucal} \label{Lambda'':exp2}
\end{multline}
We then observe that solving the derivative problem~\eqref{dd:stat:pb2} gives 
\begin{align}
  \bfu'_0 &= \kappa^{-1}D^{-1}\lpar B\tsup_0{}'\bfw \shp B_0\tsup\bfw' \rpar, \notag\\
  \bfw' &= Z^{-1} (\bfF \shm B_1\bfu'_1), \label{U':exp} \\
  \bfu'_1 &= (B_1\tsup Z^{-1}B_1)^{-1} (B_1\tsup Z^{-1}\bfF \shm \bfG \rpar, \notag
\end{align}
with the operator $Z:\Wcal\to\Wcal'$ as given in the proposition statement by $Z\shdeq A\shp\kappa^{-1}B_0D^{-1}B\tsup_0$ and having set $\bfH := -A'\bfw - B'_0\bfu_0 - B'_1\bfu_1$, $\bfF :=\bfH-\kappa^{-1}B_0 D^{-1}B_0'{}\tsup\bfw$, $\bfG := -B_1\tsup{}'\bfw$. Using~\eqref{U':exp} and noting that $\kappa^{-1}B_0D^{-1}B\tsup_0\sheq Z\shm A$, we obtain the identities
\begin{align}
  B\bfu'
 &= (Z\shm A)Z^{-1}(\bfF \shm B_1\bfu'_1) + \bfH \shm \bfF + B_1\bfu'_1
  = \bfH - AZ^{-1}(\bfF \shm B_1\bfu'_1) \\
  \bfw'\shp A^{-1}B\bfu'
 &= A^{-1}\bfH \\
  \kappa\lbra D\bfu_0',\bfu_0'\rbra_{\Ucal',\Ucal}
 &= \lbra (Z-A)\bfw',\bfw'\rbra_{\Wcal',\Wcal} + 2\lbra \bfH\shm\bfF,\bfw' \rbra_{\Wcal',\Wcal} + \kappa^{-1}\lbra \Delta \bfw,\bfw \rbra_{\Wcal',\Wcal} \\
 &= \lbra (I \shm AZ^{-1})(\bfF \shm B_1\bfu'_1),\, Z^{-1}(\bfF \shm B_1\bfu'_1) \rbra_{\Wcal',\Wcal} \suite
  + 2\lbra \bfH\shm\bfF,Z^{-1}(\bfF \shm B_1\bfu'_1) \rbra_{\Wcal',\Wcal} + \kappa^{-1}\lbra \Delta\bfw,\bfw \rbra_{\Wcal',\Wcal},
\end{align}
with the symmetric, positive operator $\Delta:\Wcal\to\Wcal'$ given by $\Delta\shdeq B_0' D^{-1}B_0'{}\tsup$. Upon substitution into~\eqref{Lambda'':exp2}, the above formulas provide
\begin{align}
  \LambdaRk''(\CS)
 &= \lbra \bfH,\, A^{-1}\bfH \rbra_{\Wcal',\Wcal}
  - \lbra AZ^{-1}(\bfF \shm B_1\bfu'_1),\,Z^{-1}(\bfF \shm B_1\bfu'_1) \rbra_{\Wcal',\Wcal}
  - \lbra \bfH,\, A^{-1}\bfH \rbra_{\Wcal',\Wcal} \suite
  + 2 \lbra \bfH,\, Z^{-1}(\bfF \shm B_1\bfu'_1) \rbra_{\Wcal',\Wcal}
  - \lbra (I \shm AZ^{-1})(\bfF \shm B_1\bfu'_1),\, Z^{-1}(\bfF \shm B_1\bfu'_1) \rbra_{\Wcal',\Wcal} \suite
  - 2\lbra \bfE,Z^{-1}(\bfF \shm B_1\bfu'_1) \rbra_{\Wcal',\Wcal} - \kappa^{-1}\lbra \Delta \bfw,\bfw \rbra_{\Wcal',\Wcal} \notag \\
 &= \lbra \bfF \shp B_1\bfu'_1,\,Z^{-1}(\bfF \shm B_1\bfu'_1) \rbra_{\Wcal',\Wcal} - \kappa^{-1}\lbra \Delta \bfw,\bfw \rbra_{\Wcal',\Wcal} \notag \\
 &= \lbra \bfF,Z^{-1}\bfF \rbra_{\Wcal',\Wcal} - \lbra B_1\bfu'_1, Z^{-1}B_1\bfu'_1 \rbra_{\Wcal',\Wcal}
  - \kappa^{-1}\lbra \Delta \bfw,\bfw \rbra_{\Wcal',\Wcal} \label{Lambda'':exp3}
\end{align}
Since~\eqref{U':exp} implies $\bfF\sheq Z\bfw' \shp B_1\bfu'_1$, the above expression of $\LambdaRk''$ yields the claimed formula.

\subsection{Proof of Theorem~\ref{red:hess}}
\label{red:hess:proof}

To begin, the expression~\eqref{U':exp} of $\bfu_1'$ gives
\begin{equation}
  B_1\bfu'_1 = P (\bfH \shm \bfE) - B_1(B_1\tsup Z^{-1}B_1)^{-1} \bfG,
\end{equation}
with the operator $P:\Wcal'\to\Wcal'$ defined by $P \shdeq B_1(B_1\tsup Z^{-1}B_1)^{-1} B_1\tsup Z^{-1}$. It is easy to see that $P$ verifies $PP\sheq P$ (i.e. $P$ is a projection) and $Z^{-1}P \sheq P\tsup Z^{-1}$. 

For the case where $B_1'=0$, we have $\bfG\sheq\bfze$, and therefore $B_1\bfu'_1 = P\bfF$. The expression~\eqref{Lambda'':exp3} of $\LambdaRk''(\CS)$ then becomes (with the above definition of $P$)
\begin{align}
  \LambdaRk''(\CS)
 &= \lbra (I \shp P)\bfF,Z^{-1}(I \shm P)\bfF \rbra_{\Wcal',\Wcal}
   - \kappa^{-1}\lbra \Delta \bfw,\bfw \rbra_{\Wcal',\Wcal} \notag\\
 &= \lbra (I \shm P)\bfF,Z^{-1}(I \shm P)\bfF \rbra_{\Wcal',\Wcal}
 - \kappa^{-1}\lbra \Delta \bfw,\bfw \rbra_{\Wcal',\Wcal} \label{Lambda'':exp4}
\end{align}
Next, applying the Sherman-Morrison-Woodbury formula to $Z=A\shp\kappa^{-1}B_0D^{-1}B\tsup_0$ gives
\begin{align}
  Z^{-1} &= A^{-1} - A^{-1}B_0 Q^{-1} B_0\tsup A^{-1} &\qquad \text{with \ }
  Q &:=\kappa D + B_0 A^{-1} B_0\tsup., \\
  (B_1\tsup Z^{-1}B_1)^{-1}
 &= A^{-1}_{11} + A^{-1}_{11}A_{10}R^{-1}A_{01}A^{-1}_{11} &\qquad \text{with \ }
  R &:= Q - A_{01}A^{-1}_{11}A_{10},
\end{align}
wherein $A_{11}\shdeq B_1\tsup A^{-1} B_1$, $A_{01}\shdeq B_0\tsup A^{-1} B_1$, $A_{10}\shdeq B_1\tsup A^{-1} B_0$. Using these identities and performing straightforward algebra, we find
\begin{align}
  I \shm P
 &= \lpar I + P_0 B_0 R^{-1} B_0\tsup A^{-1} \rpar (I\shm P_0) \\
  (I \shm P\tsup) Z^{-1} (I \shm P)
 &= (I\shm P_0)\tsup \lpar A^{-1} - A^{-1}B_0 R^{-1}B_0\tsup A^{-1} \rpar (I\shm P_0)
\end{align}
with the (projection) operator $P_0$ as defined in the theorem statement. We also note that $R^{-1}=\kappa^{-1}D^{-1}\shp o(\kappa^{-1})$, so that
\begin{equation}
  (I \shm P\tsup) Z^{-1} (I \shm P)
 = (I\shm P_0)\tsup \lpar A^{-1} - \kappa^{-1} A^{-1}B_0 D^{-1} B_0\tsup A^{-1} \rpar (I\shm P_0) + o(\kappa^{-1})
\end{equation}
We now substitute the above expansion into~\eqref{Lambda'':exp4} and set $\bfF=\bfF^{(0)}\shp\kappa^{-1}\bfF^{(1)}+o(\kappa^{-1})$ (noting that $\bfF^{(0)}=-A'\bfw^{(0)}-B'_0\bfu\msup$) and $\bfw=\bfw^{(0)}\shp\kappa^{-1}\bfw^{(1)}+o(\kappa^{-1})$, to obtain
\begin{equation}
  \LambdaRk''(\CS)
 = \LambdaRk''{}^{(0)}(\CS) + \kappa^{-1}\LambdaRk''{}^{(1)}(\CS) + o(\kappa^{-1}) \label{Lambda'':expansion}
\end{equation}
with
\begin{align}
  \LambdaRk''{}^{(0)}(\CS)
 &= \lbra (I\shm P_0)\lpar A'\bfw^{(0)}\shp B'_0\bfu\msup \rpar,A^{-1}(I\shm P_0)\lpar A'\bfw^{(0)}\shp B'_0\bfu\msup \rpar \rbra_{\Wcal',\Wcal}, \\
  \LambdaRk''{}^{(1)}(\CS)
 &= 2 \lbra (I\shm P_0)\bfF^{(0)},A^{-1}(I\shm P_0)\bfF^{(1)} \rbra_{\Wcal',\Wcal}
  - \lbra \Delta \bfw^{(0)},\bfw^{(0)} \rbra_{\Wcal',\Wcal} \\
 &= \lbra (I\shm P_0)(\bfF^{(0)}\shp\bfF^{(1)}),A^{-1}(I\shm P_0)(\bfF^{(0)}\shp\bfF^{(1)}) \rbra_{\Wcal',\Wcal} - \lbra \Delta \bfw^{(0)},\bfw^{(0)} \rbra_{\Wcal',\Wcal} \suite\qquad
  - \lbra (I\shm P_0)\bfF^{(0)},A^{-1}(I\shm P_0)\bfF^{(0)} \rbra_{\Wcal',\Wcal}
  - \lbra (I\shm P_0)\bfF^{(1)},A^{-1}(I\shm P_0)\bfF^{(1)}) \rbra_{\Wcal',\Wcal}
\end{align}
Concluding, the above value of $\LambdaRk''{}^{(0)}$ is that claimed in the theorem statement, and is clearly positive, whereas $\LambdaRk''{}^{(1)}$ is obtained as an algebraic sum of positive quadratic expressions (so is \emph{a priori} sign-indefinite). The proof of Theorem~\ref{red:hess} is complete.

\section{Conclusions}

In this work we studied some of the most salient mathematical properties of the Modified Error in Constitutive Equations (MECE) approach for inverse problems in the context of frequency-domain elastodynamics. In particular, we proved (under conditions on the available interior data) well-posedness of the coupled system that arises as part of the first order optimality conditions. The coupled problem remains well posed even when boundary conditions are partially or completely unknown and at resonant frequencies. The latter findings have strong practical implications in inverse problems where interior data is abundant and boundary conditions are difficult to ascertain. We have exploited this benefit in our recent work in elastography \cite{ghosh:17}.

We also showed that the reduced MECE functional becomes convex in the limit where the weight given to the data misfit component goes to infinity.  Moreover, convexification of the reduced objective occurs continuously as the parameter increases, as demonstrated in the numerical examples. This characteristic of the MECE functional also has strong practical implications as solutions to the inverse problem are less sensitive to initial guesses. This fact has also been exploited in our work on elasticity and viscoelasticity imaging \cite{B-2015-1,ghosh:17}.
Future work includes exploring the possibility of devising a unified MECE formulation that can be applied to a wide range of physics such as nonlinear elasticity, electromagnetism, plasticity, fluid dynamics, etc.

\end{document}